\newif\ifJSC
\tikzstyle{myCol} = [black!30]
\newcommand{\R}{\mathbb{R}}
\newcommand{\K}{\mathbb{K}}
\newcommand{\Var}{\mathcal{V}}
\newcommand{\bgr}{{G \square H}}
\newcommand{\Bgr}{{\mathcal{G} \square \mathcal{H}}}
\newcommand{\G}{\mathcal{G}}
\renewcommand{\H}{\mathcal{H}}
\newcommand{\viz}{\text{viz}}
\renewcommand{\mod}{\mathrm{mod}~}
\newcommand{\Hfiber}[1]{\prescript{#1}{}{H}}
\newcommand{\Gfiber}[1]{G^{#1}}
\DeclareMathAlphabet{\mcal}{OMS}{cmsy}{m}{n}
\DeclarePairedDelimiter\set\{\}
\DeclarePairedDelimiter{\iverson}{\llbracket}{\rrbracket}
\newcommand{\transposed}{\top}
\newcommand{\revision}[1]{{\color{black}#1}}
\newtheorem{theorem}{Theorem}[section] 
\newtheorem{definition}[theorem]{Definition}
\newtheorem{lemma}[theorem]{Lemma}
\newtheorem{observation}[theorem]{Observation}
\newtheorem{corollary}[theorem]{Corollary}
\newtheorem{conjecture}[theorem]{Conjecture}
\newtheorem{example}[theorem]{Example}
\newtheorem*{proposition*}{Proposition}
\newtheorem{remark}[theorem]{Remark}
\newtheorem*{example*}{Example}
\newcommand{\splitatcommas}[1]{%
	\begingroup
	\begingroup\lccode`~=`, \lowercase{\endgroup
		\edef~{\mathchar\the\mathcode`, \penalty0 \noexpand\hspace{0pt plus 1em}}%
	}\mathcode`,="8000 #1%
	\endgroup
}
\providecommand{\keywords}[1]{\textbf{Keywords:} #1}
\title{Sum-of-Squares Certificates for Vizing's Conjecture via 
Determining Gröbner Bases%
\thanks{This project has received funding from the Austrian 
	Science Fund (FWF): I\,3199-N31.
	Moreover, the second author has received funding from the Austrian Science 
	Fund (FWF): DOC~78.}
}
\author[1]{Elisabeth Gaar\ifJSC\footnote{Corresponding author}\fi}
\author[2]{Melanie Siebenhofer}
\affil[1]{Institute of Production and Logistics Management, Johannes Kepler 
University Linz, Austria, 
\href{mailto:elisabeth.gaar@jku.at}{elisabeth.gaar@jku.at}}
\affil[2]{Alpen-Adria-Universität Klagenfurt, 
	Austria, 
\href{mailto:melanie.siebenhofer@aau.at}{melanie.siebenhofer@aau.at}}
\date{}
\begin{document}

	\maketitle	
		
	\begin{abstract}
	The famous open Vizing conjecture claims 
	that the domination number of the Cartesian product graph of 
	two 
	graphs $G$ and $H$ is at least the product of the domination 
	numbers of~$G$ and~$H$.
	Recently Gaar, Krenn, Margulies and Wiegele used the graph class~$\G$ of 
	all graphs with~$n_\G$ vertices and domination number~$k_\G$ and   
	reformulated Vizing's 
	conjecture as the problem that 
	for all graph classes~$\G$ and~$\H$ the 
	Vizing 
	polynomial 
	is sum-of-squares (SOS) modulo the Vizing 
	ideal.
	By solving 
	semidefinite programs (SDPs) and clever guessing
	they 
	derived SOS-certificates for some values of 
	$k_\G$, $n_\G$, $k_\H$, and $n_\H$.  
	
	In this paper, we consider their approach for~$k_\G = k_\H = 1$. 
	For this case we are able to derive the unique reduced Gröbner 
	basis of the Vizing ideal.
	Based on this, we deduce the minimum degree $(n_\G + n_\H - 
	1)/2$ of an SOS-certificate for Vizing's conjecture, 
	which is the first result of this kind.
	Furthermore, we present a method to find certificates for graph 
	classes~$\G$ 
	and~$\H$ with $n_\G + n_\H -1 = d$ for general~$d$, which is again based on 
	solving SDPs, but does not depend on guessing and depends on much smaller 
	SDPs.
	We implement our new method in SageMath and give new 
	SOS-certificates for all graph classes~$\G$ and~$\H$ 
	with~$k_\G=k_\H=1$
	and~$n_\G + n_\H \leq 15$.
	
	 \keywords{Vizing's conjecture,
		Gröbner basis,
		algebraic model,		
		sum-of-squares programming,
		semidefinite programming}  
	\end{abstract}

	
	\section{Introduction}
\label{chapter:intro}

A large area of graph theory focuses on the interrelationship of graph invariants.
One of these graph invariants is the domination number~$\gamma(G)$ of a simple undirected graph~$G$, that is the minimum size of a set of vertices in~$G$, such that each vertex in the graph is either in this set itself or adjacent to a vertex in this set.
In 1968, Vizing~\cite{vizing1968} made a conjecture regarding the domination 
number of the Cartesian product $\bgr$ of the graphs $G$ and $H$. 
The vertices of~$\bgr$ are the Cartesian product of the vertices in~$G$ and~$H$, 
and the subgraphs of~$\bgr$ induced by the vertices with same fixed first tuple 
entry are isomorphic to the graph~$G$ and analogously the vertices with the same 
second tuple entry induce subgraphs isomorphic to~$H$.
Vizing conjectured that for any graphs~$G$ and~$H$ it holds that~$\gamma(\bgr) 
\geq \gamma(G) \gamma(H)$.
To date, it is not clear whether this conjecture is true.
Nevertheless, for many classes of graphs it has already been shown that 
Vizing's conjecture holds, see the survey of Brešar et al.~\cite{survey2012} 
for details.

The first algebraic formulation of Vizing's conjecture has been done by 
Margulies and Hicks in~\cite{margulies_illya2012}. 
An algebraic method to solve combinatorial problems is to encode the problem as 
a system of polynomial equations and apply the Nullstellensatz or 
Positivstellensatz.
In several areas this and similar approaches have been used to show new 
results, for example for 
colorings~\cite{alon-tarsi1992,loera-jesus1995,loera-etal2008,shalom1992,hillar-windfeldt2008,lovasz1994,matiyasevich2001,mnuk2001},
 stable 
sets~\cite{loera-jesus1995,loera-etal2009,gouveia2010,li1981,lovasz1994,simis-wolmer-villarreal1994},
 flows~\cite{alon-tarsi1992,mnuk2001,onn2004} and matchings~\cite{fischer1988} 
in graphs.
Gaar, Krenn, Margulies and Wiegele~\cite{vizing-short2019} used an algebraic 
method to 
reformulate Vizing's 
conjecture as a sum-of-squares (SOS) program. 
In such a program one asks the 
question of whether it is possible to represent a non-negative polynomial as 
the sum of squares of polynomials.
SOS are heavily used in the area of polynomial optimization, see for 
example Blekherman, Parrilo and Thomas~\cite{blekherman-parrilo2013}, and also 
in many other fields like dynamical systems, geometric theorem proving and 
quantum mechanics, see for example 
Parrilo~\cite{parrilo2004}.

Such SOS programs can be solved with the help of semidefinite programming 
(SDP). 
Roughly speaking, a semidefinite program (SDP) is like a linear program but 
instead of a non-negative 
vector variable one has a positive semidefinite matrix variable and the 
Frobenius inner product is used instead of vector multiplications.
As for linear programs, there is also a duality theory for SDPs.
They are often used as relaxation\revision{s} of combinatorial optimization problems.
The first contribution to this area was the seminal paper of 
Lovász~\cite{lovasz1979} in 1979.
Around 1990, the interest in SDP exploded.
Nowadays there are several off-the-shelf solvers for SDPs, for example  
MOSEK~\cite{mosek} and SDPT3~\cite{sdpt3}.
Some nice survey papers on SDP are for example Vandenberghe and 
Boyd~\cite{vandenberghe-boyd1996} and 
Todd~\cite{todd2001}.

As already mentioned, Gaar et al.~\cite{vizing-short2019,vizing-long2020} 
presented a new approach for proving Vizing's conjecture by finding 
SOS Positivstellensatz certificates with the help of SDP.
In particular, they used SDP in order to prove that the so-called Vizing 
polynomial 
is SOS modulo the so-called Vizing 
ideal~$I_\viz$.
In addition, they provide code to computationally find numeric certificates and check certificates for correctness.
Furthermore, they gave certificates for the graph classes~$\G$ (all graphs 
with~$n_\G$ vertices and domination number~$k_\G$) and~$\H$ (all graphs 
with~$n_\H$ vertices and domination number~$k_\H$) with the property 
that~$n_\G$,~$k_\G$,~$n_\H$ and $k_\H$ satisfy~$k_\G = n_\G - 1 \geq 1$ 
and~$k_\H 
= n_\H - 1$ for~$n_\H \in \{2,3\}$ and the graph classes with~$k_\G = n_\G$ 
and~$k_\H = n_\H - d$ for~$d \leq 4$.

In this paper, we focus on the graph classes~$\G$ and~$\H$ with domination 
numbers~$k_\G = k_\H =1$.
Due to this special choice of the parameters, we are able to determine the 
unique reduced Gröbner basis of the Vizing ideal~$I_\viz$, which is an 
important part of finding SOS-certificates.
With the help of this Gröbner basis, we can determine the minimum 
degree of any certificate, which is~$(n_\G + n_\H - 1)/2$.

Furthermore, we show that if SOS-certificates are of a specific form, then the 
certificates for graph classes with~$n_\G + n_\H - 1 = d$ for a fixed 
integer~$d \geq 3$ depend on~$d$ only.
Based on that, we introduce a new method to find SOS-certificates for these 
graph 
classes. 
This method again makes use of SDP, 
but unlike in the approach of~\cite{vizing-short2019,vizing-long2020}, no 
algebraic numbers have to be guessed.
Additionally, the SDP that has to be solved is much smaller than the one in the 
other 
approach.
With the help of our implementation of the new algorithm in 
SageMath~\cite{sagemath}, we give certificates for all graph classes~$\G$ 
and~$\H$ with~$k_\G = k_\H = 1$ up to~$n_\G + n_\H \leq 15$.
\revision{
	The program code of the implementation discussed in Section~\ref{chapter:generalapproach}
	is available as ancillary files from the arXiv page of this paper
	at \href{https://arxiv.org/src/2112.04007/anc}{arxiv.org/src/2112.04007/anc}.
}

For \revision{these} specific graph classes with~$k_\G = k_\H =1$ it is clear 
that Vizing's 
conjecture holds, as it simply states that the domination number of the 
Cartesian product graph is greater or equal to $1$, which holds for every 
graph. Thus, in this paper we do not advance the knowledge on whether Vizing's 
conjecture is true for some graph classes or not. However, deriving new 
certificates via an algebraic method is an important step in the area of 
using conic linear optimization for computer-assisted proofs, because it 
demonstrates that deriving such proofs is possible for a wider set of graph 
classes.

The paper is structured as follows.
In Section~\ref{chapter:thbackground} we present all formal definitions and the 
background on the algebraic method of Gaar et al.\ we need for our results. 
In this paper, we focus on the case~$k_\G = k_\H = 1$.
First, we determine the reduced Gröbner basis and state the minimum degree of a 
certificate for~$k_\G = k_\H = 1$ in Section~\ref{chapter:gbasis}.
In Section~\ref{chapter:examples} we show  how to find 
2-SOS-certificates for~$n_\G=n_\H=2$ and for~$n_\G=3$ and $n_\H =2$.
Next, derived from the previous examples, we propose a general form of 
certificates for Vizing's conjecture in the case of~$k_\G=k_\H=1$ in 
Section~\ref{chapter:generalapproach}. We work out 
a new general method to prove the correctness of such certificates and we also 
list certificates for all graph classes with~$n_\G + n_\H \leq 15$, which we 
found using the newly implemented method.
Finally, we conclude and point out some open questions in 
Section~\ref{chapter:conclusion}.

	\section{Formal Definitions and Background}
\label{chapter:thbackground}

Vizing's conjecture is centered around the 
domination number defined as follows. 
\begin{definition}
	\label{def:dom-nr}
	Let $G = (V,E)$ be an undirected graph. A subset of vertices $D \subseteq 
	V(G)$ is a \emph{dominating set} of $G$ if for all $u \in V(G) \setminus D$ 
	there exists a vertex $v \in D$ such that $u$ is adjacent to $v$. In this 
	case we say that vertex $v$ \emph{dominates} vertex $u$. A dominating set 
	is a \emph{minimum dominating set} of $G$ if there is no dominating set 
	with smaller cardinality. The \emph{domination number} $\gamma(G)$ is 
	the cardinality of a minimum dominating set of $G$.
\end{definition}

Our interest lies in the behavior of the domination number on the product of 
two graphs, the so-called Cartesian product graph, which is defined as follows.
\begin{definition}
	Let $G$ and $H$ be two graphs. The \emph{Cartesian product graph} $G 
	\square H $ is a graph with vertices $V(G \square H) = V(G) \times V(H)$ 
	and edge set
	\begin{align*}
	E(G \square H) = \Big\{ \big\{ (g,h), (g',h') \big\} \Big\vert \  & g = g' 
	\in V(G) \text{ and } \{h,h'\} \in E(H)  \text{, or}\\
	& h = h' \in V(H) \text{ and } \{g, g'\} \in E(G) \Big\}.
	\end{align*}
\end{definition}
For convenience we will further write $gh$ for a vertex $(g,h) \in V(G\square  
H)$. 
Figure~\ref{fig:example-prod-graph} shows the Cartesian product graph of 
the cyclic graph $C_4$ and the linear graph $P_4$. 
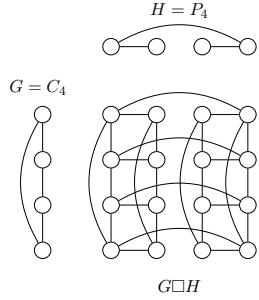
\begin{figure}[htbp]
	\centering
	\scalebox{0.6}%
	{		
		\begin{tikzpicture}[my node/.style={draw, circle}]
		\node[my node, fill = white] (4a) at (-1.5,0) {};
		\node[my node] (3a) at (-1.5,1) {};
		\node[my node, fill = white] (2a) at (-1.5,2) {};
		\node[my node] (1a) at (-1.5,3) {};
		
		\node[my node] (1b) at (0,4.5) {};
		\node[my node,  fill = white] (2b) at (1,4.5) {};
		\node[my node] (3b) at (2,4.5) {};
		\node[my node,  fill = white] (4b) at (3,4.5) {};
		
		\draw (1a) -- (2a) -- (3a) -- (4a); \draw (4a) to[bend left] (1a);
		\draw (1b) -- (2b); \draw (3b) -- (4b); \draw (4b) to[bend right] (1b);
		
		\node at (-1.6,3.6) {$G= C_4$};
		\node at (1.5,5.3) {$H=P_4$};
		
		\node[my node] (41) at (0,0) {};
		\node[my node] (42) at (1,0) {};
		\node[my node] (43) at (2,0) {};
		\node[my node] (44) at (3,0) {};
		
		\node[my node] (31) at (0,1) {};
		\node[my node] (32) at (1,1) {};
		\node[my node] (33) at (2,1) {};
		\node[my node] (34) at (3,1) {};
		
		\node[my node] (21) at (0,2) {};
		\node[my node] (22) at (1,2) {};
		\node[my node] (23) at (2,2) {};
		\node[my node] (24) at (3,2) {};
		
		\node[my node] (11) at (0,3) {};
		\node[my node] (12) at (1,3) {};
		\node[my node] (13) at (2,3) {};
		\node[my node] (14) at (3,3) {};

		\draw (11) -- (12); \draw (13) --(14); \draw (14) to[bend right] (11);
		\draw (21) -- (22); \draw (23) --(24); \draw (24) to[bend right] (21);
		\draw (31) -- (32); \draw (33) --(34); \draw (34) to[bend right] (31);
		\draw (41) -- (42); \draw (43) --(44); \draw (44) to[bend right] (41);
		
		\draw (11) -- (21) -- (31) --(41); \draw (41) to[bend left] (11);
		\draw (12) -- (22) -- (32) --(42); \draw (42) to[bend left] (12);
		\draw (13) -- (23) -- (33) --(43); \draw (43) to[bend left] (13);
		\draw (14) -- (24) -- (34) --(44); \draw (44) to[bend left] (14);
		\node at (1.5, -0.8) {$G \square H$};
		

		\end{tikzpicture}
	}
	\caption{The Cartesian product graph $C_4 \square P_4$}
	\label{fig:example-prod-graph}
\end{figure}

For any vertex $g \in V(G)$ 
the 
vertices~$\revision{\{(g,h) \vert h \in V(H) \}}$ 
induce a subgraph of $\bgr$ that  
is isomorphic to~$H$.
Such a subgraph is called $H$-fiber and denoted by $\Hfiber{g}$.
Also for $h\in V(H)$ the subgraph $\Gfiber{h}$ of $\bgr$ induced 
by~$\revision{\{ 
(g,h) \vert g \in V(G)\}}$ is called a $G$-fiber.

In 1963, Vizing asked the question about the connection between the domination 
numbers of~$G$ and~$H$ and the domination number of the Cartesian product graph 
of~$G$ and~$H$ in~\cite{vizing1963}.
Five years later, he published the following conjecture in~\cite{vizing1968}.

\begin{conjecture}[Vizing's conjecture]
	For any two graphs $G$ and $H$, the inequality
	\begin{equation*}
	\gamma(G \square H) \geq \gamma(G) \gamma(H)
	\end{equation*}
	holds.
\end{conjecture}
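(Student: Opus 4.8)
The plan is to prove Vizing's conjecture by producing, for every pair of graphs, a Positivstellensatz certificate in the algebraic framework this paper builds on, and then making that certificate uniform over all pairs. First I would fix $G$ and $H$ and set up the polynomial encoding of Gaar et al.: Boolean variables describing a candidate dominating set of $\bgr$, together with the idempotency relations, the ``every vertex is dominated'' relations, and the relations pinning $\gamma(G)=k_\G$ and $\gamma(H)=k_\H$, which together generate the Vizing ideal $I_\viz$. Working in the quotient ring modulo $I_\viz$, Vizing's conjecture for the pair $(G,H)$ is exactly the statement that the Vizing polynomial is a sum of squares modulo $I_\viz$, and the existence of a degree-$d$ SOS certificate is decided by a single semidefinite feasibility program over the degree-$d$ truncation of the quotient.

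Next I would pass from individual pairs to the graph classes $\G$ (all graphs with $n_\G$ vertices and domination number $k_\G$) and $\H$ (all graphs with $n_\H$ vertices and domination number $k_\H$), since a certificate constructed this way depends only on the quadruple $(n_\G,k_\G,n_\H,k_\H)$; the conjecture then reduces to exhibiting one certificate per admissible quadruple. To make this finite and tractable I would (i) following the present paper's strategy for $k_\G=k_\H=1$, compute the reduced Gröbner basis of $I_\viz$ for general parameters, which yields both a normal-form algorithm in the quotient and an upper bound on the degree $d$ that must be searched; (ii) exploit symmetry --- the group permuting the $H$-fibers $\Hfiber{g}$ among themselves and the $G$-fibers $\Gfiber{h}$ among themselves acts on the SDP, so by Reynolds averaging one may restrict to the invariant subalgebra and block-diagonalise, drastically shrinking the program; and (iii) interpolate the numerically obtained certificates for small $n_\G,n_\H$ into a \emph{parametric} template whose entries are rational functions of $n_\G,k_\G,n_\H,k_\H$, then discharge the whole family at once by verifying the SOS identity symbolically as a polynomial identity in the parameters.

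The hard part --- and the reason the conjecture has resisted proof since 1968 --- lies entirely in the passage to general domination numbers in steps (i) and (iii). The case $k_\G=k_\H=1$ treated here is special precisely because the domination constraints degenerate (a single vertex dominates the whole graph), which is what lets one determine the Gröbner basis and the exact minimum certificate degree $(n_\G+n_\H-1)/2$ by hand; already for $k_\G,k_\H\geq 2$ the combinatorics of ``every vertex is dominated by the chosen set'' obscures even the leading-term structure of $I_\viz$, and nothing forces the certificate degree to stay polynomially bounded, which is exactly what step (iii)'s interpolation would need. Furthermore, any successful certificate family must implicitly beat the factor $\tfrac12$ in the Clark--Suen partition bound $\gamma(\bgr)\geq\tfrac12\gamma(G)\gamma(H)$, still the best unconditional lower bound known, and since no linear or semidefinite relaxation is currently known to improve that factor uniformly, the parametric template in (iii) would have to encode a genuinely new combinatorial mechanism rather than mechanise a known argument. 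A realistic intermediate goal, and what this paper actually delivers, is therefore to execute the entire program --- Gröbner basis, minimum certificate degree, and a compact SDP-based verification procedure --- on the smallest slice $k_\G=k_\H=1$ as a proof of concept, leaving the extension to larger $k_\G,k_\H$, which a genuine proof of Vizing's conjecture would require, as the central obstacle.
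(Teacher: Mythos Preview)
The statement under review is Vizing's \emph{conjecture}, not a theorem: the paper does not prove it, and there is no proof in the paper to compare your proposal against. The conjecture has been open since 1968, and the paper explicitly treats it as such, contributing SOS certificates only for the slice $k_\G=k_\H=1$ with small $n_\G+n_\H$.

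Your proposal is not a proof either, and to your credit you say so. What you have written is a research program --- set up the Vizing ideal, compute Gr\"obner bases parametrically, block-diagonalise the SDP by symmetry, interpolate certificates into a parametric family --- together with a clear-eyed account of why the crucial steps (i) and (iii) are currently out of reach for general $k_\G,k_\H$. That is a reasonable summary of the landscape, but it is not a proof attempt in any meaningful sense: every step that would carry actual mathematical content (the general Gr\"obner basis, a uniform degree bound, the existence of a closed-form certificate family) is deferred, and you correctly note that overcoming any one of them would already improve on the best known unconditional bounds. So the ``gap'' here is not a subtle error but the entire conjecture: nothing in your outline reduces the problem to something tractable, and you acknowledge this explicitly in your final paragraph.
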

To date, there is no answer to the question of whether Vizing's 
conjecture 
is true.
The typical approach to attack Vizing's conjecture is to show that for a 
specific $G$ Vizing's conjecture holds for any graph $H$.
Many results are based on the assumption that $G$ can be partitioned into 
subgraphs of a special kind.
The conjecture holds for example whenever~$G$ is a cycle, a 
tree\revision{,} or has domination number less \revision{than} or equal to~3 
and~$H$ may be any graph.
Furthermore, Zerbib~\cite{zerbib19} \revision{proved} in 2019 
that
\begin{equation*}
\gamma(\bgr) \geq \frac{1}{2} \gamma(G) \gamma(H) + \frac{1}{2}\max \{ 
\gamma(G),\gamma(H) \}\revision{,}
\end{equation*}
a weaker result\revision{.}  
To show that Vizing's conjecture is false, one may try to find a counterexample.
Some properties of a minimal counterexample are known, for example it has to be 
a graph with domination number greater than 
3 and for each vertex $g \in V(G)$ there has to exist a minimum dominating set 
that contains $g$. 
We refer to the survey paper of Brešar et al.~\cite{survey2012}
for an exceedingly nice and structured overview of the results on 
Vizing's conjecture.

In order to present new results for and with the approach of Gaar, Krenn, 
Margulies and Wiegele introduced in~\cite{vizing-short2019,vizing-long2020}, we 
continue 
with algebraic basics needed throughout the paper. \revision{For more details and, in particular,
the definitions of (total degree lexicographical) term orders, the leading term and (reduced) Gröbner bases,
which we will need in Section~\ref{chapter:gbasis}, we
refer the reader to the book of Cox, 
Little and O'Shea~\cite{cox-little-O'shea2015}.}

By $I$ we denote an ideal in a polynomial ring $P = \K[z_1, \dots, z_n]$ over a real field $\K \subseteq \R $.
By~$\overline{\K}$ we denote the algebraic closure of~$\K$.
\revision{We denote by $\Var(I) = \big \{ z^* \in \overline{\K}^n \, \big\vert \, f(z^*) = 0 \text{ for all } f \in I \big\} $ the variety of the ideal $I$.}
\revision{The ideal we consider in this paper is proven to be radical (i.e., 
for any 
	polynomial $f 
	\in P$ and any positive integer~$m$ the fact that $f^m \in I$ implies that 
	$f$ is in the ideal $I$) by Gaar et 
	al.~\cite{vizing-short2019}.}
\revision{This allows us to apply the following important theorem.}
\begin{theorem}[Hilbert's Nullstellensatz \revision{for radical ideals}]
	\label{thm:hilbert}
	Let $P = \K [z_1,\dots,z_n]$ be a polynomial ring over a field $\K$ and $I \subseteq P$ a \revision{radical} ideal. 
	If $f(z^*) = 0$ for all $z^* \in \Var(I)$ for some $f \in P$, then $f$ is in the ideal $I$.
\end{theorem}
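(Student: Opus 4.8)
The plan is to obtain this statement as a consequence of the classical \emph{strong} Nullstellensatz over an algebraically closed field, bridging the gap that the field $\K \subseteq \R$ considered here is itself not algebraically closed by a base-change argument, and then using the radicality of $I$ only at the very last step. Recall that $\Var(I)$ is, by definition, the zero set of $I$ inside $\overline{\K}^n$, so it is natural to work over $\overline{\K}$ from the outset.

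Concretely I would proceed as follows. First, extend scalars: let $P' = \overline{\K}[z_1,\dots,z_n]$ and let $I' = I\,P'$ be the ideal of $P'$ generated by $I$. Since $I$ and $I'$ have the same generating set, a point $z^* \in \overline{\K}^n$ annihilates all of $I$ if and only if it annihilates all of $I'$, so $\Var(I)$ equals the variety of $I'$ over the algebraically closed field $\overline{\K}$. Second, apply the strong Nullstellensatz to $I'$ over $\overline{\K}$ — proved, e.g., in Cox--Little--O'Shea~\cite{cox-little-O'shea2015} via the weak Nullstellensatz together with Rabinowitsch's trick of adjoining a variable $y$ and considering the ideal generated by $I'$ and $1-yf$, which has empty variety and hence equals $(1)$. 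Since $f$ vanishes on the variety of $I'$, this yields an integer $m \geq 1$ with $f^m \in I'$. Third, descend back to $P$: fixing a $\K$-basis $(c_i)_{i\in A}$ of $\overline{\K}$ with $c_{i_0}=1$ for some $i_0\in A$, one obtains a decomposition $P' = \bigoplus_{i\in A} c_i\,P$ as a $P$-module, and expanding any membership relation $f^m = \sum_j a_j p_j$ (with $a_j\in P'$ and $p_j\in I$) in this basis shows that every element of $I'$ has the form $\sum_{i\in A} c_i q_i$ with all $q_i\in I$; since $f^m\in P=c_{i_0}P$, uniqueness of the decomposition forces $f^m=q_{i_0}\in I$, i.e. $I'\cap P = I$. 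Fourth and finally, $f^m \in I$ together with $I$ being radical gives $f\in I$, which is the claim. (If $I = P$ the statement is vacuous: then $\Var(I)=\emptyset$ and $f\in P = I$ anyway.)

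The only place that requires genuine care — and the step I would flag as the main obstacle — is the descent $I'\cap P = I$: one must check that passing to $\overline{\K}[z_1,\dots,z_n]$ and contracting back does not enlarge the ideal. This is exactly the faithful flatness of $\overline{\K}$ over $\K$, and the explicit basis argument above makes it elementary. Everything else is a direct invocation of the strong Nullstellensatz over an algebraically closed field (whose own proof rests on the weak Nullstellensatz) and of the definition of a radical ideal; in particular, for the single ideal actually used in this paper, which is known to be radical by Gaar et al.~\cite{vizing-short2019}, no further work beyond this template is needed.
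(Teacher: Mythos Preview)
The paper does not actually prove this theorem: it is stated as a background result, with the reader referred to Cox--Little--O'Shea~\cite{cox-little-O'shea2015} for the surrounding algebraic machinery. Your proposal therefore supplies a proof where the paper gives none; the argument you sketch --- extend scalars to $\overline{\K}$, invoke the strong Nullstellensatz there, descend via the $\K$-basis decomposition $P' = \bigoplus_i c_i P$ to get $I'\cap P = I$, and conclude by radicality --- is correct and is the standard route. The descent step you flag is indeed the only point requiring care, and your basis argument handles it cleanly (it is precisely faithful flatness of the field extension, made explicit).
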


\revision{Note that} if the ideal~$I$ is finitely generated by the polynomials~$f_1, \dots, f_r \in P$, it is enough to check that $f$ vanishes on the common zeros of the generating polynomials (over the algebraic closure $\overline{\K}$).

The main idea of the approach by Gaar et al.\ is to prove Vizing's conjecture 
by showing for a particular constructed ideal that a specific polynomial is 
non-negative on the variety of the ideal.
For this purpose, they use the subsequent definitions.
\begin{definition}


	
	\revision{
		Two polynomials 
	$f, g \in P$ are \emph{congruent modulo an ideal}~$I$ (denoted by $f 
	\equiv g \ \ 
	\mod I$), if $f - g \in I$ or equivalently $f = g + h$ for some~$h \in 
	I$.	}
	
	Let $\ell$ be a non-negative integer. A polynomial~$f\in P$ is 
	$\ell$\emph{-sum-of-squares modulo} $I$ ($\ell$-SOS modulo $I$), if there 
	are polynomials $s_1, \dots , s_t \in P$ of degree at most $\ell$ such that
	\begin{equation*}
	f \equiv \sum\limits_{i=1}^t s_i^2 \ \ \mod I.
	\end{equation*}
	We say that the polynomials~$s_1, \dots, s_t$ form an \emph{SOS-certificate} 
	of degree~$\ell$.
\end{definition}

In the approach of Gaar et al., Vizing's conjecture is investigated for classes 
of graphs for~$G$ and $H$ with fixed number of vertices in the graph and fixed 
domination number.
The graph classes are denoted in the following way.
\begin{definition}
	\label{definition:graph-class}
	Let $n_\G$ and $k_\G$ be positive integers with $k_\G \leq n_\G$ defining the class of graphs $\G$ as the set of graphs with $n_\G$ vertices and fixed minimum dominating set $D_\G$ of size $k_\G$. 
\end{definition}

Without loss of generality, the minimum dominating set $D_\G$ is fixed. 
All other graphs can be obtained by relabeling the vertices.
For $k_\G = k$, we set $D_\G = \{g_1, \dots ,g_k \}$.

In a next step, Gaar et al.\ construct an ideal, in which points in the variety 
correspond to graphs in the graph class $\G$.
The variables in this setting are boolean edge variables~$e_{gg'}$ indicating 
whether there is an edge between the vertices $g$ and $g'$.

\begin{definition}
	\label{def:ideal-graph}
	Let the set of variables be $e_\G = \{ e_{gg'} \ \vert \ g \neq g' \in 
	V(\G) \}.$ The ideal $I_\G \subseteq P_\G = \K[e_\G]$ is 
	\revision{generated} by the  
	polynomials
	\begin{subequations}\label{eqn:ideal-graph}
		\begin{align}
		e_{gg'}(e_{gg'} - 1) && \text{for all } \revision{g \neq g' \in 
		V(\G)},  \label{subeqn:edges} \\
		\prod\limits_{g'\in D_\G} (1- e_{gg'}) && \text{for all } g \in V(\G) \setminus D_\G, \label{subeqn:dom-edge}\\
		\prod\limits_{g' \in V(\G) \setminus S} \Big( \sum\limits_{g \in S}  e_{gg'} \Big) && \text{for all } S \subseteq V(\G) \text{ with } \lvert S \rvert = k_\G - 1. \label{subeqn:min-dom}
		\end{align}
	\end{subequations}
\end{definition}
\revision{Note that in the case $k_\G = 1$ \eqref{subeqn:dom-edge} 
simplifies 
to $(1- e_{gg_1})$ for all $g \in V(\G)$ with $g \neq g_1$ as $ D_\G = 
\{g_1\}$ 
and \eqref{subeqn:min-dom} is void.}

Gaar et al.\ proved that the following theorem holds.
\begin{theorem}
	\label{thm:bij-var-Ig}
	The points in the variety of $I_\G$ are in bijection to the graphs in $\G$.
\end{theorem}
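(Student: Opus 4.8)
The plan is to exhibit an explicit correspondence between points of $\Var(I_\G)$ and graphs in $\G$, and then verify it is a well-defined bijection. First I would set up the correspondence. A point $e^* \in \overline{\K}^{e_\G}$ in the variety assigns a value $e^*_{gg'}$ to each unordered pair $g \neq g' \in V(\G)$; the generators \eqref{subeqn:edges} force every $e^*_{gg'} \in \{0,1\}$, so $e^*$ is the edge-indicator vector of a genuine simple graph $G_{e^*}$ on the vertex set $V(\G)$. Conversely, any graph on $n_\G$ labelled vertices gives such a $0/1$ vector. So the content of the theorem is: the remaining generators \eqref{subeqn:dom-edge} and \eqref{subeqn:min-dom} vanish at $e^*$ if and only if $D_\G = \{g_1,\dots,g_{k_\G}\}$ is a \emph{minimum} dominating set of $G_{e^*}$, i.e.\ if and only if $G_{e^*} \in \G$.

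Next I would prove the two halves of this equivalence by translating each family of generators into a combinatorial statement. For \eqref{subeqn:dom-edge}: since each $e^*_{gg'}\in\{0,1\}$, the product $\prod_{g'\in D_\G}(1-e^*_{gg'})$ equals $1$ if $g$ is adjacent to no vertex of $D_\G$ and $0$ otherwise; hence it vanishes for all $g \in V(\G)\setminus D_\G$ exactly when every non-$D_\G$ vertex has a neighbour in $D_\G$, i.e.\ exactly when $D_\G$ is a dominating set. For \eqref{subeqn:min-dom}: for a fixed $S \subseteq V(\G)$ with $|S| = k_\G-1$, the inner sum $\sum_{g\in S} e^*_{gg'}$ counts the neighbours of $g'$ inside $S$, so it is zero precisely when $g'$ is not dominated by $S$ (and $g'\notin S$); the outer product over $g'\in V(\G)\setminus S$ is therefore zero iff some vertex outside $S$ has no neighbour in $S$, i.e.\ iff $S$ is \emph{not} a dominating set. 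Requiring this for all such $S$ says no set of size $k_\G-1$ dominates $G_{e^*}$. Combining the two: $e^*\in\Var(I_\G)$ iff $D_\G$ dominates $G_{e^*}$ and no smaller set does, which is precisely $\gamma(G_{e^*}) = k_\G$ with $D_\G$ a minimum dominating set — the defining condition of the class $\G$. Finally I would note the map $e^* \mapsto G_{e^*}$ is injective (distinct $0/1$ vectors give distinct labelled graphs) and surjective onto $\G$ (every graph in $\G$ has, by Definition~\ref{definition:graph-class}, $D_\G$ as a fixed minimum dominating set and hence arises as some $G_{e^*}$), establishing the bijection.

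The only subtlety — and the main point requiring care rather than difficulty — is making sure the sign conventions and the "$0/1$" reductions are handled cleanly: once \eqref{subeqn:edges} is used to restrict to boolean values, the polynomials \eqref{subeqn:dom-edge} and \eqref{subeqn:min-dom} are genuine indicator functions of the negations of "dominating set" conditions, so every generator vanishing is equivalent to the corresponding combinatorial property holding. One should also double-check the degenerate case $k_\G = 1$ separately, where \eqref{subeqn:min-dom} is void and \eqref{subeqn:dom-edge} collapses to the linear polynomials $1 - e_{gg_1}$; here the variety consists exactly of the graphs in which $g_1$ is adjacent to every other vertex, which are precisely the graphs of domination number $1$ with fixed dominating vertex $g_1$, consistent with the general argument. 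No deeper algebraic input (such as the Nullstellensatz or radicality) is needed for this theorem — it is a direct unwinding of the definitions.
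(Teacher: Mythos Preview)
Your proposal is correct and follows exactly the approach the paper sketches: the paper does not give a formal proof of this theorem (it is attributed to Gaar et al.), but the explanatory paragraph immediately following the statement records precisely your argument --- \eqref{subeqn:edges} forces boolean edge indicators, \eqref{subeqn:dom-edge} encodes that $D_\G$ dominates, and \eqref{subeqn:min-dom} encodes minimality. Your write-up simply fills in the details of that sketch.
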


	We write $e_\G^*$ for elements in the variety of $I_\G$, and by $e_{gg'}^*$ 
	we denote the coordinate of $e_\G^*$ corresponding to the variable 
	$e_{gg'}$.
	
	Any point $e_\G^*$ in the variety of $I_\G$ is a common zero of the 
	generating polynomials. 
	Let $G$ be the graph the point  $e_\G^*$ corresponds to according to 
	Theorem~\ref{thm:bij-var-Ig}.
	Then~\eqref{subeqn:edges} ensures that $e_{gg'}^*$ is 
	either~0 or~1 and indicates whether $g$ is adjacent to $g'$ in 
	$G$,~\eqref{subeqn:dom-edge} guarantees that all vertices of $G$ are 
	dominated by $D_\G$ and~\eqref{subeqn:min-dom} makes sure that $D_\G$ is a 
	minimum 
	dominating set, thus the domination number of $G$ is indeed $k_\G$.

Analogously, Gaar et al.\ introduce the ideal $I_\H \subseteq P_\H = 
\K[e_\H]$ corresponding to the class $\H$, which contains all graphs of size 
$n_\H$ and fixed minimum dominating set $D_\H$ of size $k_\H$.
Moreover, they consider the graph class $\Bgr$, consisting of all Cartesian 
product graphs of graphs from $\G$ and $\H$.
The ideal $I_\Bgr$, where the boolean variables~$x_{gh}$ indicate whether the 
vertex $(g,h) \in V(\Bgr)$ is in the dominating set, 
is constructed as follows. 
\begin{definition}
	\label{def:ideal-prodgraph}
	Let $x_\Bgr = \set{ x_{gh} \ \vert \ g \in V(\G), h \in V(\H)}$. The ideal $I_\Bgr \subseteq P_\Bgr = \K[x_\Bgr \cup e_\G \cup e_\H]$ is generated by the polynomials
	\begin{subequations}\label{eqn:ideal-prodgraph}
		\begin{align}
		x_{gh}(x_{gh} - 1)&  \text{ and } \label{subeqn:vertices} \\
		(1 - x_{gh}) \Bigg( \prod\limits_{\substack{g'\in V(\G) \\ g' \neq g }} (1 - e_{gg'}x_{g'h})  \Bigg)
		\Bigg( \prod\limits_{\substack{h'\in V(\H) \\ h' \neq h }} (1 - 
		e_{hh'}x_{gh'})  \Bigg)& \label{subeqn:dom-set}
		\end{align}
	\end{subequations}
	for all $g \in V(\G)$ and $h \in V(\H)$.
\end{definition}

Next, Gaar et al.\ introduced a final ideal with the following 
\revision{properties}.
\begin{definition}
	For given graph classes $\G$ and $\H$ the \emph{Vizing 
	ideal}~$I_\viz \subseteq P_\Bgr$ is defined as the ideal generated by the 
	elements of 
	$I_\G$, $I_\H$ and $I_\Bgr$.
\end{definition}
\begin{lemma}\label{lem:Ivizradicalandfinite}
	\revision{The ideal $I_\viz$ is radical with finite variety.}
\end{lemma}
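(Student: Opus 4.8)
The plan is to read off both claims directly from the generating polynomials of $I_\viz$. By definition $I_\viz$ is generated by all generators of $I_\G$, $I_\H$ and $I_\Bgr$ together, and these include the polynomials $e_{gg'}(e_{gg'}-1)$ from \eqref{subeqn:edges} (both for the variables of $\G$ and for those of $\H$) and the polynomials $x_{gh}(x_{gh}-1)$ from \eqref{subeqn:vertices}. Hence for \emph{every} variable $z$ of the polynomial ring $P_\Bgr = \K[x_\Bgr \cup e_\G \cup e_\H]$ the ideal $I_\viz$ contains the univariate polynomial $z(z-1)$, whose two roots $0$ and $1$ are distinct.

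From this, finiteness of the variety is immediate: any $z^* \in \Var(I_\viz) \subseteq \overline{\K}^N$, where $N$ denotes the number of variables of $P_\Bgr$, must satisfy $z^*_i(z^*_i - 1) = 0$ in each coordinate, so $z^*_i \in \{0,1\}$ and therefore $\Var(I_\viz) \subseteq \{0,1\}^N$, a finite set. Equivalently, $I_\viz$ is zero-dimensional, since $P_\Bgr/I_\viz$ is spanned as a $\K$-vector space by the residue classes of the squarefree monomials.

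For radicality I would invoke Seidenberg's lemma: a zero-dimensional ideal in a polynomial ring over a perfect field that contains, for each variable $z_i$, a squarefree univariate polynomial in $\K[z_i]$ is radical (see, e.g., the book of Cox, Little and O'Shea~\cite{cox-little-O'shea2015}). As $\K \subseteq \R$ has characteristic zero it is perfect, and $z_i(z_i-1)$ is squarefree by the observation above, so the lemma applies to $I_\viz$. Alternatively, this part can simply be cited from Gaar et al.~\cite{vizing-short2019}, who already establish radicality of the ideal under consideration. I do not anticipate a real obstacle along either route; the only points that deserve an explicit line are, first, that $z(z-1)$ genuinely lies in $I_\viz$ itself and not merely in $I_\G$, $I_\H$ or $I_\Bgr$ separately (which is clear because $I_\viz$ contains every generator of all three ideals), and second, if one prefers to avoid citing Seidenberg's lemma, using the form of the statement that is valid over fields that are not algebraically closed.
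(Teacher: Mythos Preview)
Your argument is correct. The paper itself does not give a proof of this lemma at all; it is stated without proof, and a few paragraphs earlier the text simply remarks that the ideal ``is proven to be radical \ldots\ by Gaar et al.~\cite{vizing-short2019}''. So the paper's ``proof'' is exactly the citation option you mention at the end. Your self-contained route via the Boolean generators $z(z-1)$ and Seidenberg's lemma is the standard way to establish both claims directly and is presumably what underlies the cited result; it buys you independence from the external reference at essentially no cost.
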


\begin{theorem}
	\label{thm:bij-var-iviz}
	The points in the variety $\Var(I_\viz)$ are in bijection to the triples $(G, H, D)$, where~$G \in \G$, $H \in \H$ and $D \subseteq V(\bgr)$ is any (not necessary minimum) dominating set in~$\bgr$.
\end{theorem}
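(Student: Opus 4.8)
The plan is to exhibit the bijection explicitly and verify it in both directions, using Theorem~\ref{thm:bij-var-Ig} (together with its obvious analogue for $\H$) to handle the $e_\G$- and $e_\H$-coordinates of a point, and analysing the generators of $I_\Bgr$ directly to handle the $x_\Bgr$-coordinates. Throughout, the key preliminary observation is that every point $z^* = (x^*, e_\G^*, e_\H^*) \in \Var(I_\viz)$ has all coordinates in $\{0,1\}$: the generators $e_{gg'}(e_{gg'}-1)$ coming from $I_\G$ (and $I_\H$) and the generators $x_{gh}(x_{gh}-1)$ from~\eqref{subeqn:vertices} each factor into two linear forms with roots $0$ and $1$, so even over the algebraic closure $\overline{\K}$ the corresponding coordinates are forced to lie in $\{0,1\}$; in particular no subtlety arises from passing to $\overline{\K}$.

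First I would define the forward map $z^* \mapsto (G,H,D)$. Since the generators of $I_\G$ are among the generators of $I_\viz$ and involve only the variables $e_\G$, the projection of $z^*$ onto its $e_\G$-coordinates is a point of $\Var(I_\G)$, hence by Theorem~\ref{thm:bij-var-Ig} it corresponds to a unique graph $G \in \G$; symmetrically the $e_\H$-coordinates give a unique $H \in \H$. The $x_\Bgr$-coordinates, being $0/1$, are the indicator vector of a unique subset $D = \{(g,h) : x_{gh}^* = 1\} \subseteq V(\bgr)$. It remains to show that $D$ is automatically a dominating set of $\bgr$, i.e.\ that the generators~\eqref{subeqn:dom-set} encode exactly this. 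I would argue pointwise: fix $(g,h)\in V(\bgr)$ and evaluate~\eqref{subeqn:dom-set} at $z^*$. The factor $(1 - x_{gh})$ vanishes iff $(g,h)\in D$. For $g' \neq g$ the factor $1 - e_{gg'}x_{g'h}$ vanishes iff $\{g,g'\}\in E(G)$ and $(g',h)\in D$, i.e.\ iff $(g',h)$ is a neighbour of $(g,h)$ inside the $G$-fiber $\Gfiber{h}$ that lies in $D$; likewise for $h'\neq h$ the factor $1 - e_{hh'}x_{gh'}$ vanishes iff $(g,h')$ is a neighbour of $(g,h)$ inside the $H$-fiber $\Hfiber{g}$ that lies in $D$. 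Since the neighbourhood of $(g,h)$ in $\bgr$ is exactly the (disjoint) union of these two fiber-neighbourhoods, the product of the two $\prod$-blocks vanishes iff $(g,h)$ has a neighbour in $D$. Hence~\eqref{subeqn:dom-set} vanishes at $(g,h)$ iff $(g,h)\in D$ or $(g,h)$ has a neighbour in $D$, and requiring this for all $(g,h)$ is precisely the statement that $D$ is a dominating set of $\bgr$. This shows the forward map is well defined.

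For the reverse direction, given any triple $(G,H,D)$ with $G\in\G$, $H\in\H$ and $D$ a dominating set of $\bgr$, I would set $e_\G^*$ and $e_\H^*$ to be the edge-indicator vectors of $G$ and $H$ and $x^*$ the indicator vector of $D$. By Theorem~\ref{thm:bij-var-Ig} and its $\H$-analogue the generators of $I_\G$ and $I_\H$ vanish at this point; the generators~\eqref{subeqn:vertices} vanish because $x^*$ is $0/1$; and the generators~\eqref{subeqn:dom-set} vanish by the pointwise analysis of the previous paragraph together with the assumption that $D$ is dominating. Hence the point lies in $\Var(I_\viz)$, and the two constructions are visibly mutually inverse, which establishes the claimed bijection. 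I expect the only genuinely delicate point to be the bookkeeping in the pointwise analysis of~\eqref{subeqn:dom-set} — specifically, correctly matching the two $\prod$-blocks with the $G$-fiber and $H$-fiber neighbourhoods of $(g,h)$ in the Cartesian product — while the remaining steps reduce to the already-established Theorem~\ref{thm:bij-var-Ig} and the elementary fact that a squared-off linear generator forces $0/1$ coordinates.
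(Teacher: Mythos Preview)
Your proof is correct, but note that the present paper does not actually prove this theorem: it is stated as a result of Gaar, Krenn, Margulies and Wiegele~\cite{vizing-short2019,vizing-long2020} and no proof is given here. Your argument is exactly the natural one (and matches what appears in the cited work): project onto the $e_\G$- and $e_\H$-coordinates and invoke Theorem~\ref{thm:bij-var-Ig} and its $\H$-analogue to recover $G$ and $H$, read off $D$ from the $0/1$ vector $x^*$, and verify that the generators~\eqref{subeqn:dom-set} vanish at a $0/1$ point precisely when every vertex $(g,h)$ is in $D$ or has a $\bgr$-neighbour in $D$. The only comment is that your ``key preliminary observation'' about $0/1$ coordinates over $\overline{\K}$ is genuinely needed and you handle it correctly; beyond that there is nothing to add.
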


	We denote the elements from the variety of $I_\viz$ by $z^*$, and by 
	$x_{gh}^*$, $e_{gg'}^*$ and $e_{hh'}^*$, we refer to the different 
	coordinates of $z^*$ for $g$, $g' \in V(\G)$, and $h$, $h' \in V(\H)$.
	For $z^* \in \Var(I_\viz)$, the polynomial~\eqref{subeqn:vertices} implies 
	that $x_{gh}^*$ is 0 or 1, which indicates if the vertex~$(g,h)$ is 
	in the dominating set $D$ that corresponds to $z^*$ according to 
	Theorem~\ref{thm:bij-var-iviz}.
	Furthermore,~\eqref{subeqn:dom-set} warrants that $D$ is a dominating set.

The polynomial of special interest for 
Gaar et al.\
is the so-called Vizing polynomial defined as follows.
\begin{definition}
	For given graph classes $\G$ and $\H$, the \emph{Vizing polynomial} is defined as
	\begin{equation*}
	f_\viz = \Big( \sum\limits_{gh \in V(\Bgr)} x_{gh} \Big) - k_\G k_\H.
	\end{equation*}
\end{definition}
With the help of this polynomial, Gaar et al.\ formulate the following 
important theorem, that provides a new method to prove Vizing's conjecture.
\begin{theorem}
	\label{cor:conjecture-sos}
	Vizing's conjecture is true if and only if for all positive integers 
	$n_\G$, $k_\G$, $n_\H$ and~$k_\H$ with $k_\G \leq n_\G$ and $k_\H \leq 
	n_\H$, there exists a  positive integer $\ell$ such that the Vizing 
	polynomial~$f_\viz$ is $\ell$-SOS modulo $I_\viz$.
\end{theorem}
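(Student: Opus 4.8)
The plan is to prove the two implications separately, in both cases using the dictionary between $\Var(I_\viz)$ and triples $(G,H,D)$ from Theorem~\ref{thm:bij-var-iviz}, together with the fact from Lemma~\ref{lem:Ivizradicalandfinite} that $I_\viz$ is radical with a finite variety. Observe first that the generators $x_{gh}(x_{gh}-1)$, $e_{gg'}(e_{gg'}-1)$ and $e_{hh'}(e_{hh'}-1)$ force every point of $\Var(I_\viz)$ to have all coordinates in $\{0,1\}$; hence $\Var(I_\viz)$ is a finite set of rational points, and on this set $f_\viz$ takes only integer values.

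\emph{Sufficiency (the SOS condition implies Vizing's conjecture).} Let $G$ and $H$ be arbitrary graphs; set $n_\G=\abs{V(G)}$, $k_\G=\gamma(G)$, and analogously for $H$. After relabeling vertices -- which changes neither domination numbers nor Cartesian products -- we may assume $G\in\G$ and $H\in\H$, so in particular $\gamma(G)=k_\G$ and $\gamma(H)=k_\H$. Choosing a minimum dominating set $D$ of $\bgr$, Theorem~\ref{thm:bij-var-iviz} produces a point $z^*\in\Var(I_\viz)$ with $\sum_{gh\in V(\Bgr)}x_{gh}^*=\abs{D}=\gamma(\bgr)$, hence $f_\viz(z^*)=\gamma(\bgr)-k_\G k_\H$. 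If $f_\viz\equiv\sum_i s_i^2\mod I_\viz$ for some $s_i\in P_\Bgr$, then evaluating at the common zero $z^*$ gives $f_\viz(z^*)=\sum_i s_i(z^*)^2\ge 0$, i.e.\ $\gamma(\bgr)\ge\gamma(G)\gamma(H)$. As $G$ and $H$ were arbitrary, Vizing's conjecture follows.

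\emph{Necessity (Vizing's conjecture implies the SOS condition).} Fix $n_\G,k_\G,n_\H,k_\H$. Every $z^*\in\Var(I_\viz)$ corresponds to some $(G,H,D)$ with $\gamma(G)=k_\G$, $\gamma(H)=k_\H$ and $\sum_{gh\in V(\Bgr)}x_{gh}^*=\abs{D}\ge\gamma(\bgr)\ge\gamma(G)\gamma(H)=k_\G k_\H$, where the first inequality holds because $D$ is a dominating set and the second is Vizing's conjecture; hence $f_\viz(z^*)$ is a non-negative integer. Enumerate $\Var(I_\viz)=\{z_1^*,\dots,z_m^*\}$ and, for each $j$, build a Lagrange-type interpolator $\delta_j\in P_\Bgr$ with $\delta_j(z_j^*)=1$ and $\delta_j(z_i^*)=0$ for $i\neq j$; since the $z_i^*$ are distinct $0/1$-vectors, $\delta_j$ can be taken as a product of at most $m-1$ linear forms, each of the shape $z_r$ or $1-z_r$. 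Writing each non-negative integer $f_\viz(z_j^*)=\sum_{t=1}^4 c_{j,t}^2$ with $c_{j,t}\in\Z\subseteq\K$ (Lagrange's four-square theorem), the polynomial $g:=\sum_{j=1}^m\sum_{t=1}^4(c_{j,t}\,\delta_j)^2$ is a sum of squares with $g(z_i^*)=f_\viz(z_i^*)$ for all $i$. Then $g-f_\viz$ vanishes on $\Var(I_\viz)$, so by Theorem~\ref{thm:hilbert} applied to the radical ideal $I_\viz$ we get $g-f_\viz\in I_\viz$, i.e.\ $f_\viz\equiv g\mod I_\viz$; since every $c_{j,t}\,\delta_j$ has degree at most $m-1$, this shows $f_\viz$ is $(m-1)$-SOS modulo $I_\viz$.

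The proof has no genuinely deep step once Lemma~\ref{lem:Ivizradicalandfinite} and Theorems~\ref{thm:hilbert} and~\ref{thm:bij-var-iviz} are in place; I expect the only points needing care -- both in the necessity direction -- to be passing from non-negativity of $f_\viz$ on $\Var(I_\viz)$ to an honest square representation over the field $\K$ (supplied by the four-square theorem, since the values are non-negative integers and $\Z\subseteq\K$) and confirming that the certificate degree $\ell$ can be chosen finite (immediate from the finiteness of $\Var(I_\viz)$ together with the explicit interpolators). This bookkeeping should be the main, though still routine, obstacle.
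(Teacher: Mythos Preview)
Your proof is correct. The paper does not actually supply its own proof of this theorem; it only cites the general positivity-versus-SOS result for zero-dimensional radical ideals due to Parrilo (via Laurent~\cite[Theorem~2.4]{laurent2009} and Gaar et al.~\cite[Lemma~2.8]{vizing-long2020}), and then separately quotes a Lasserre-type degree bound. Your argument instead gives a self-contained, elementary construction: Lagrange interpolation on the finite $0/1$-variety together with Lagrange's four-square theorem. This is a genuinely different route, and it has the pleasant feature that it works over an arbitrary subfield $\K\subseteq\R$ without appealing to the real-closed machinery behind the cited results---the integrality of $f_\viz$ on $\Var(I_\viz)$ is exactly what lets you write the values as sums of squares of elements of $\Z\subseteq\K$.

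One small remark on your degree bound: building $\delta_j$ as a product of $m-1$ separating linear forms is correct, but since $m=\lvert\Var(I_\viz)\rvert$ can be exponential in the number of variables, the resulting $\ell=m-1$ is typically far weaker than necessary. Because every $z_j^*$ is a $0/1$-vector, the full indicator
\[
\delta_j \;=\; \prod_{r:\,z_{j,r}^*=1} z_r \;\prod_{r:\,z_{j,r}^*=0} (1-z_r)
\]
already does the job with degree equal to the number of variables, recovering the bound $\ell\le n_\G n_\H+\binom{n_\G}{2}+\binom{n_\H}{2}$ that the paper mentions just after the theorem. This does not affect correctness, only the sharpness of the quantitative statement.
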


\revision{
Note that Theorem~\ref{cor:conjecture-sos} is based on a result that connects 
the non-negativity of a polynomial on a variety of an ideal with the fact that 
this polynomial is $\ell$-SOS modulo the ideal for some value of $\ell$, see 
Gaar et al.~\cite[Lemma 2.8]{vizing-long2020}, and also Laurent~\cite[Theorem 
2.4]{laurent2009}, which is based on results by Parrilo~\cite{parrilo2002}, for 
further 
details.
}

\revision{
We want to point out that with arguments like the ones of 
Lasserre~\cite{lasserre2001}, one can obtain an upper bound on the $\ell$ to 
consider in Theorem~\ref{cor:conjecture-sos}. In particular, due to the 
generators~\eqref{subeqn:edges} and~\eqref{subeqn:vertices} of $I_\viz$, every 
monomial can be reduced over $I_\viz$ 
such that each variable has power at most one. Thus, when setting up the SDP, 
it suffices to consider all possible monomials that contain each variable with 
power at most one. As a result, in Theorem~\ref{cor:conjecture-sos} this 
gives $\ell \leq n_\G n_\H + \binom{n_\G}{2} + \binom{n_\H}{2}$.
}

To find SOS-certificates for Vizing's conjecture as in 
\revision{Theorem}~\ref{cor:conjecture-sos}, Gaar et 
al.~\cite{vizing-long2020} formulated these problems of finding 
SOS-certificates as SDPs as described below. 

They first fix $n_\G$, $n_\H$, $k_\G$ and $k_\H$ and determine 
$I_\viz$. 
Let $B$ be a Gröbner basis of $I_\viz$ and fix $\ell$ to be 
some positive integer.
Let $v$ be the vector of all monomials in $P_\Bgr$ of degree smaller or equal 
to $\ell$, that equal themselves when reduced by $B$. 
It is enough to consider these monomials as potential parts of the polynomials 
$s_1,\dots,s_t$ of an $\ell$-SOS-certificate.
Let~$u$ be the length of $v$.
Furthermore, let $S$ be a real $t \times u$ matrix, where the entries of 
row~$i$ represent the coefficients of the monomials from $v$ in $s_i$.
Then it holds that $Sv$ is the vector $(s_1,\dots,s_t)^\transposed$.
Now, let $X$ be the positive semidefinite matrix $S^\transposed S$, then 
\[\sum_{i=1}^{t} s_i^2 = (Sv)^\transposed(Sv) = v^\transposed Xv \]
holds.
As a result, the polynomials $s_1,\dots,s_t$ form an $\ell$-SOS-certificate if 
and only if
\[v^\transposed Xv \equiv f_\viz \quad \mod I_\viz\]
holds,
which is the case if for both sides of the equivalence the unique remainder of 
reduction by $B$ is the same.
By equating the coefficients, Gaar et 
al. obtain linear equations in the entries of the variable matrix~$X$.
To find a matrix $X$, which satisfies these equations and is additionally 
positive semidefinite, they set up an SDP with the constraints obtained by 
these 
equations.
The objective function of this SDP can be chosen arbitrarily as any feasible 
solution gives rise to an $\ell$-SOS-certificate.

\revision{Note that it is also possible to set up an SDP to decide whether a 
polynomial is $\ell$-SOS without the knowledge of a Gröbner basis, as it is 
described for example by Laurent~\cite{laurent2007, laurent2009}. However, the 
number 
of variables and constraints of this alternative SDP may be significantly 
larger. Thus, using the Gröbner basis of $I_\viz$ is a useful technical tool to 
reduce the size of the occurring SDP.}

Once an optimal solution~$X$ of the SDP is found, the matrix $S$ is derived by 
computing the eigenvalue decomposition $X = Q^\transposed \Lambda Q$ and setting $S = 
\Lambda^{1/2}Q$.
Unfortunately, the entries of $X$ are numerical, meaning that the values in $S$ do not represent an exact certificate.
The strategy of Gaar et al. is to find an objective function such that one can 
guess exact values for the entries in $X$ or $S$ and then check whether the 
obtained certificate is indeed valid with the code provided 
in~\cite{vizing-long2020}.

The final step is to prove the correctness of the found certificate algebraically. Ideally, one discovers some structures and finds a way to determine a general certificate for further graph classes like Gaar et al.\ did.

To sum up, the approach presented by Gaar et al.\ consists of the following 
steps. First 
fix $n_\G$, $n_\H$, $k_\G$ and $k_\H$ and compute a reduced Gröbner basis of 
$I_\viz$, 
then set up and solve an SDP in order to get a numeric certificate.
Next, guess an exact certificate 
and verify the certificate computationally. Finally, prove the correctness of 
the certificate and generalize the certificate. 
In this way, Gaar et al.\ successfully derived SOS-certificates for 
$k_\G = n_\G - 1 
\geq 1$ 
and~$k_\H = n_\H - 1$ where~$n_\H \in \{2,3\}$, and for~$k_\G = n_\G$ 
and~$k_\H = n_\H - d$ where~$d \leq 4$. 

	\section{Gröbner Basis of the Vizing Ideal 
	for~\texorpdfstring{$k_\G = k_\H = 1$}{kG=kH=1}}
\label{chapter:gbasis}

In this paper, we focus on Vizing's conjecture for graphs~$G$ and~$H$ with 
domination number 1, 
so 
we consider graph classes $\G$ and $\H$ with $k_\G = k_\H = 1$ and  
fixed dominating sets~$D_\G = \{g_1\}$ and~$D_\H = \{h_1\}$.
\revision{In particular, this implies that $g_1$ and $h_1$ are adjacent to all 
other 
vertices of $G$ and $H$, respectively.}
In this section, we first derive some simple statements with similar 
methods resulting from the work of Gaar et 
al.~\cite{vizing-short2019,vizing-long2020},
which we then use to determine the Gröbner basis of the Vizing ideal and to 
derive the minimum degree of any SOS-certificate.

\subsection{Auxiliary Results}
\label{sec:auxiliary-results}
The statements we derive in this section are 
based on the proof techniques of~\cite{vizing-short2019,vizing-long2020} and \revision{are} 
similar to those in Section 5.1 
of~\cite{vizing-long2020}.
We introduce the subsets of vertices
$$T_{gh} = \{ (g',h') \in V(\Bgr) \ \vert \ g' = g \text{ or }  h'=h \} = 
V(\G^{h}) \cup V(\prescript{g}{}{\H})$$ 
from 
$V(\Bgr)$, which are potentially adjacent to a vertex $(g,h)\in V(\Bgr)$ in 
$\Bgr$. 
The corresponding variables of the vertices in $T_{gh}$ are exactly those 
which appear in the polynomial~\eqref{subeqn:dom-set}.
This leads to the 
following lemma.
\begin{lemma}
	\label{lemma:reduction-polynomial-in-ideal}
	The polynomial \[\prod_{(g',h') \in T_{gh}} (1 - x_{g'h'})\] is in the Vizing ideal $I_\viz$ for all $g \in V(\G)$, $h \in V(\H)$. 
\end{lemma}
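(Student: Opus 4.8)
The plan is to deduce membership from Hilbert's Nullstellensatz (Theorem~\ref{thm:hilbert}), which is applicable since $I_\viz$ is radical by Lemma~\ref{lem:Ivizradicalandfinite}. Hence it suffices to show that the polynomial $\prod_{(g',h') \in T_{gh}}(1 - x_{g'h'})$ vanishes at every $z^* \in \Var(I_\viz)$. Fix such a point and let $(G,H,D)$ be the associated triple provided by Theorem~\ref{thm:bij-var-iviz}. The generators~\eqref{subeqn:vertices} force every coordinate $x_{g'h'}^*$ to be $0$ or $1$, and $x_{g'h'}^* = 1$ precisely when $(g',h') \in D$, so the value of the polynomial at $z^*$ equals $0$ if and only if some vertex of $T_{gh}$ lies in $D$.

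I would then split into two cases. If $(g,h) \in D$, then $x_{gh}^* = 1$ and, since $(g,h) \in T_{gh}$, the factor $(1 - x_{gh})$ already vanishes. Otherwise $(g,h) \notin D$, and since $D$ is a dominating set of $\bgr$ there is a vertex of $D$ adjacent to $(g,h)$ in $\bgr$; by the definition of the Cartesian product graph, any such neighbour $(g'',h'')$ satisfies $g'' = g$ or $h'' = h$, hence lies in $T_{gh}$. Then $x_{g''h''}^* = 1$ and the factor $(1 - x_{g''h''})$ vanishes. In either case the polynomial is $0$ on $\Var(I_\viz)$, so by Theorem~\ref{thm:hilbert} it belongs to $I_\viz$.

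It is also instructive to give an explicit polynomial certificate, mirroring the proof of the analogous statement in Section~5.1 of~\cite{vizing-long2020}. Let $Q$ be the generator~\eqref{subeqn:dom-set} attached to $(g,h)$ and put $M = \prod_{g' \neq g}(1 - x_{g'h})\prod_{h' \neq h}(1 - x_{gh'})$. Using the elementary identities $(1 - e_{gg'}x_{g'h})(1 - x_{g'h}) = (1 - x_{g'h}) + e_{gg'}\,x_{g'h}(x_{g'h} - 1)$ and $(1 - e_{hh'}x_{gh'})(1 - x_{gh'}) = (1 - x_{gh'}) + e_{hh'}\,x_{gh'}(x_{gh'} - 1)$, expanding the product $Q\,M$ shows that $Q\,M$ equals $\prod_{(g',h') \in T_{gh}}(1 - x_{g'h'})$ up to a polynomial combination of the generators~\eqref{subeqn:vertices}; since $Q$ and these generators lie in $I_\viz$, so does $\prod_{(g',h') \in T_{gh}}(1 - x_{g'h'})$.

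Either route is short, and there is no genuine obstacle. The one point that has to be got right is the combinatorial observation that both $(g,h)$ itself and every neighbour of $(g,h)$ in $\bgr$ lie in $T_{gh} = V(\G^h) \cup V(\prescript{g}{}{\H})$ — which is precisely the reason why the generator~\eqref{subeqn:dom-set} was built from exactly the variables indexed by $T_{gh}$.
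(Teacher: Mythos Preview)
Your first argument is correct and follows the paper's proof: both show that the polynomial vanishes on $\Var(I_\viz)$ and then invoke Hilbert's Nullstellensatz for radical ideals. The only cosmetic difference is that the paper argues directly from the vanishing of the generator~\eqref{subeqn:dom-set} together with the booleanness of the edge variables (so $(1-e^*_{gg'}x^*_{g'h})=0$ forces $(1-x^*_{g'h})=0$), whereas you pass through the bijection of Theorem~\ref{thm:bij-var-iviz} and reason combinatorially about the dominating set $D$; these are two phrasings of the same step.

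Your second argument---multiplying the generator~\eqref{subeqn:dom-set} by $M$ and using $(1-e x)(1-x)=(1-x)+e\,x(x-1)$ factor by factor---is a genuine alternative not in the paper: it produces an explicit expression of $\prod_{(g',h')\in T_{gh}}(1-x_{g'h'})$ as a $P_\Bgr$-combination of the generators~\eqref{subeqn:dom-set} and~\eqref{subeqn:vertices}, so it bypasses the Nullstellensatz entirely. The paper's route is shorter; yours is constructive and gives an effective membership certificate.
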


\begin{proof}
	Let $z^* \in \Var(I_\viz)$ be a common zero of the generating polynomials of~$I_\viz$.
	This implies for given $g \in V(\G)$ and $h \in V(\H)$, that
	\begin{equation*}
	(1 - x^*_{gh}) \Bigg( \prod\limits_{\substack{g'\in V(\G) \\ g' \neq g }} (1 - e^*_{gg'}x^*_{g'h})  \Bigg)
	\Bigg( \prod\limits_{\substack{h'\in V(\H) \\ h' \neq h }} (1 - e^*_{hh'}x^*_{gh'})  \Bigg) = 0.
	\end{equation*}
	Moreover, we know that $e^*_{gg'} \in \{ 0, 1\}$ for all $g$, $g' \in 
	V(\G)$ and $e^*_{hh'} \in \{0, 1\}$ for all $h$, $h' \in V(\H)$.
	Therefore,  
	\begin{equation*}
	(1 - x^*_{gh}) \Bigg( \prod\limits_{\substack{g'\in V(\G) \\ g' \neq g }} (1 - x^*_{g'h})  \Bigg)
	\Bigg( \prod\limits_{\substack{h'\in V(\H) \\ h' \neq h }} (1 - x^*_{gh'})  \Bigg) = 0
	\end{equation*}
	holds, which implies that $z^*$ is a zero of $\prod_{(g',h') \in T_{gh}} (1 - x_{g'h'})$.
	Applying Hilbert's Nullstellensatz \revision{for radical ideals} 
	(Theorem~\ref{thm:hilbert}) proves the lemma, \revision{as $I_\viz$ is 
	radical (Lemma~\ref{lem:Ivizradicalandfinite})}.
\end{proof}

We further define the following polynomials.
\begin{definition}
	\label{def:rho}
	Let $(g,h) \in V(\Bgr)$ and let $i\leq n_\G + n_\H - 1 $ be a non-negative 
	integer.
	Then the polynomial $\rho_{gh}^i$ is defined as 
	\begin{equation*}
	\rho_{gh}^i = \sum_{\substack{S \subseteq T_{gh} \\ \lvert S \rvert = i}} \prod_{(g',h') \in S} x_{g'h'}.
	\end{equation*}
\end{definition}

Note that 
	the polynomial~$\rho_{gh}^i$ is the sum of all monomials consisting of $i$ 
	distinct variables from~$T_{gh}$.

\begin{lemma}
	\label{lemma:reduction-polynomial-as-sum}
	It holds that
	\begin{equation*}
	\prod_{(g',h') \in T_{gh}} (1 - x_{g'h'}) = \sum_{i = 0}^{n_\G + n_\H - 1}  (-1)^i  \rho_{gh}^i.
	\end{equation*}
\end{lemma}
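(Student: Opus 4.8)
The identity to be shown is nothing but the standard expansion of a product of linear factors $1-y_j$ into elementary symmetric polynomials, specialised to the variables indexed by $T_{gh}$. The plan is therefore to expand the left-hand side by the distributive law and to sort the resulting monomials by size.

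First I would record that $T_{gh} = V(\Gfiber{h}) \cup V(\prescript{g}{}{\H})$ is the union of a set of $n_\G$ vertices and a set of $n_\H$ vertices whose only common element is $(g,h)$, so $\lvert T_{gh}\rvert = n_\G + n_\H - 1$. This matches the upper summation index in the claim; note that subsets $S \subseteq T_{gh}$ of cardinality $i > n_\G + n_\H - 1$ do not exist, so extending or truncating the sum makes no difference, and $\rho_{gh}^i$ is well defined exactly for $i \le n_\G + n_\H - 1$ as in Definition~\ref{def:rho}.

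Next I would expand $\prod_{(g',h') \in T_{gh}} (1 - x_{g'h'})$ by choosing, independently for each factor, one of the two summands $1$ or $-x_{g'h'}$. Such a choice is in bijection with the subset $S \subseteq T_{gh}$ consisting of those indices $(g',h')$ for which the summand $-x_{g'h'}$ is picked; the corresponding term of the expansion is
\begin{equation*}
\prod_{(g',h') \in S} (-x_{g'h'}) = (-1)^{\lvert S\rvert} \prod_{(g',h') \in S} x_{g'h'}.
\end{equation*}
Summing over all subsets $S \subseteq T_{gh}$ and grouping them by their cardinality $i = \lvert S\rvert$ yields
\begin{equation*}
\prod_{(g',h') \in T_{gh}} (1 - x_{g'h'}) = \sum_{S \subseteq T_{gh}} (-1)^{\lvert S\rvert} \prod_{(g',h') \in S} x_{g'h'} = \sum_{i=0}^{n_\G + n_\H - 1} (-1)^i \sum_{\substack{S \subseteq T_{gh} \\ \lvert S\rvert = i}} \prod_{(g',h') \in S} x_{g'h'},
\end{equation*}
and the inner sum is precisely $\rho_{gh}^i$ by Definition~\ref{def:rho}, which gives the asserted equality.

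There is essentially no obstacle here: the statement is a purely formal polynomial identity, and the only point worth stating explicitly is the count $\lvert T_{gh}\rvert = n_\G + n_\H - 1$, which legitimises the range of the summation index. If one wanted to avoid the bijection-with-subsets phrasing, an equally short alternative would be a straightforward induction on $\lvert T_{gh}\rvert$, peeling off one factor at a time and using the recursion $e_i(\text{all}) = e_i(\text{rest}) + y\,e_{i-1}(\text{rest})$ for elementary symmetric polynomials; I would, however, prefer the direct expansion as it is more transparent.
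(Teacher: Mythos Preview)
Your proof is correct and follows essentially the same approach as the paper's own argument: both expand the product by the distributive law, view each term as arising from a choice of subset $S\subseteq T_{gh}$, and then group the resulting monomials by $\lvert S\rvert$ to recognise the $\rho_{gh}^i$. Your write-up is somewhat more explicit (in particular you spell out the count $\lvert T_{gh}\rvert = n_\G + n_\H - 1$), but there is no substantive difference in method.
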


\begin{proof}
	By expanding the product we get a sum whose summands are a product of $i$ 
	negative vertex variables and $\lvert T_{gh} \rvert - i = n_\G + n_\H - 1 - 
	i$ ones for all values of~$i$ between~$0$ and~$n_\G + n_H - 1$.
	Since the polynomial $\rho_{gh}^i$ is the sum of all monomials consisting of $i$ distinct variables corresponding to the vertices $(g',h') \in T_{gh}$ and $\rho_{gh}^0 = 1$, the equality holds.
\end{proof}

By simple reductions and combinatorial reasoning, we obtain the following lemma.
\begin{lemma}
	\label{lemma:product-two-polynomials-reduced}
	Let $T \subseteq V(\Bgr)$ be a non-empty set of cardinality $d$. For any positive integer~$k \leq d$ we define the polynomial $\pi^k$ as
	\begin{equation*}
	\pi^k = \sum_{\substack{S \subseteq T \\ \lvert S \rvert = k}} \prod_{(g,h) \in S} x_{gh}.
	\end{equation*}
	Then for all integers $i$, $j$ with $1 \leq i \leq j \leq d$ it holds that
	\begin{equation*}
	\pi^i \pi^j \equiv \sum_{r = 0}^{\min \{i, d - j\}} \binom{i}{r} \binom{j+r}{i} \pi^{j+r} \quad \mod I_{\viz}.
	\end{equation*}	
\end{lemma}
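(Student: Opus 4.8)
The plan is to push everything through the idempotency relations supplied by the generators $x_{gh}(x_{gh}-1)$ of $I_\viz$ (these lie in $I_\Bgr \subseteq I_\viz$), which give $x_{gh}^2 \equiv x_{gh} \pmod{I_\viz}$ and hence make every positive power of $x_{gh}$ congruent to $x_{gh}$. After this reduction the entire statement becomes a piece of squarefree-monomial bookkeeping. Concretely, I would first expand
\[
\pi^i \pi^j \;=\; \sum_{\substack{S_1 \subseteq T \\ \lvert S_1 \rvert = i}}\ \sum_{\substack{S_2 \subseteq T \\ \lvert S_2 \rvert = j}}\ \prod_{(g,h)\in S_1} x_{gh} \prod_{(g,h)\in S_2} x_{gh},
\]
and reduce each summand modulo $I_\viz$: by $x_{gh}^2 \equiv x_{gh}$ the product $\prod_{S_1} x_{gh}\cdot\prod_{S_2} x_{gh}$ collapses to $\prod_{(g,h)\in S_1\cup S_2} x_{gh}$. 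Thus $\pi^i\pi^j$ is congruent to the sum of the monomials $\prod_{(g,h)\in S_1\cup S_2} x_{gh}$ over all \emph{ordered} pairs $(S_1,S_2)$ with $\lvert S_1\rvert = i$ and $\lvert S_2\rvert = j$.

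Next I would sort these terms by the union $U := S_1 \cup S_2$. For a fixed $U \subseteq T$ with $\lvert U \rvert = m$, the number of ordered pairs $(S_1,S_2)$ with $\lvert S_1\rvert=i$, $\lvert S_2\rvert=j$ and $S_1\cup S_2 = U$ is found by first choosing $S_2\subseteq U$ with $\lvert S_2\rvert = j$ (in $\binom{m}{j}$ ways), and then completing $S_1$, which must contain the $m-j$ elements of $U\setminus S_2$ and pick the remaining $i-(m-j)$ elements from the $j$ elements of $S_2$ (in $\binom{j}{\,i-m+j\,}$ ways). This count $\binom{m}{j}\binom{j}{\,i-m+j\,}$ is independent of the particular $U$, so since $\pi^m = \sum_{\lvert U\rvert = m}\prod_{(g,h)\in U} x_{gh}$ we obtain
\[
\pi^i\pi^j \;\equiv\; \sum_{m} \binom{m}{j}\binom{j}{\,i-m+j\,}\,\pi^m \quad \mod I_{\viz}.
\]

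Finally I would substitute $m = j+r$ and pin down the range. The summand is nonzero exactly when $r \ge 0$ (from $\binom{m}{j}\neq 0$, i.e. $m\ge j$), when $r \le i$ (from $i-m+j = i-r \ge 0$), and when $m \le d$ (since $U\subseteq T$ and $\lvert T\rvert = d$), that is, $0 \le r \le \min\{i,d-j\}$; the remaining condition $m \ge i$ arising from $i-m+j\le j$ is automatic because $i\le j\le m$. It then remains to check the elementary factorial identity
\[
\binom{j+r}{j}\binom{j}{\,i-r\,} \;=\; \frac{(j+r)!}{r!\,(i-r)!\,(j+r-i)!} \;=\; \binom{i}{r}\binom{j+r}{i},
\]
which rewrites the displayed congruence as exactly the claimed one.

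The only genuine obstacle is getting the combinatorial count right — in particular identifying the second binomial factor correctly and determining the precise summation range (notably why the lower limit is $0$ and not $i-j$, which relies on $i\le j$); beyond the idempotency reduction no new idea is required.
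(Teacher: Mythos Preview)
Your proof is correct and takes essentially the same approach as the paper: both reduce via $x_{gh}^2\equiv x_{gh}$ and then count, for a fixed squarefree monomial on $j+r$ variables, the number of ordered pairs $(S_1,S_2)$ producing it, arriving at the same multinomial $\frac{(j+r)!}{r!\,(i-r)!\,(j+r-i)!}$. The only cosmetic difference is that the paper obtains this coefficient directly as a trinomial partition of the $j+r$ variables into groups of sizes $i-r$, $r$, $j+r-i$, whereas you factor the count as $\binom{j+r}{j}\binom{j}{i-r}$ (choose $S_2$, then complete $S_1$) and then rewrite it; both routes are equivalent.
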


\begin{proof}
	From the generating polynomial~\eqref{subeqn:vertices} it follows that $x^2 \equiv x \ \mod I_\viz$ for all variables~$x \in T$.
	Furthermore, all monomials in the polynomial $\pi^j$ have degree $j$ and 
	those in $\pi^i$ are of degree $i$. This implies that the monomials in 
	$\pi^i \pi^j$ reduced by the 
	generating polynomials in~\eqref{subeqn:vertices} have at least degree~$j$ 
	and  
	the maximum degree is the minimum of $i + j$ 
	and the maximum number of distinct variables $d$.
	Therefore, 
	$$
\pi^i \pi^j \equiv \sum_{r = 0}^{\min \{i, d - j\}} \phi_r \pi^{j+r} \quad 
\mod I_{\viz}
$$	
	for some coefficients $\phi_r \in \mathbb{Z}$. 
		
	In order to determine $\phi_r$
	let us take a closer look at the coefficient of $\pi^{j+r}$ after we 
	reduced $\pi^i \pi^j$ by~\eqref{subeqn:vertices}. All 
	monomials of the polynomial~$\pi^{j+r}$ consist of $j+r$ different 
	variables.
	When we multiply two monomials $m_1$ and $m_2$ with $i$ and $j$ different 
	variables, the resulting reduced monomial $m$ consists of $j+r$ distinct 
	variables, if $r$ variables of $m_1$ are in~$m_1$ but not in $m_2$ and 
	$i-r$ variables of $m_1$ are in both monomials.
	This can be viewed as dividing $j+r$ variables into 3 groups with $i-r$, 
	$r$, and $j+r-i$ elements.
	Hence, for a fixed monomial $m$ in $\pi^{j+r}$, this gives us
	\begin{equation*}
	\phi_r = \frac{(j+r)!}{(i-r)!r!(j+r-i)!} = \binom{i}{r} \binom{j+r}{i}
	\end{equation*}
	different ways to choose the monomials $m_1$ and $m_2$ such that $m_1 m_2 
	\equiv m \ \mod I_\viz$.
\end{proof}

Lemma~\ref{lemma:product-two-polynomials-reduced} can be applied to the polynomials $\rho_{gh}^i$ and leads to the following corollary.

\begin{corollary}
	\label{cor:reduce-prod-polynomials}
	For all $(g,h) \in V(\Bgr)$ and for all integers $i$, $j$ with $1 \leq i \leq j \leq n_\G + n_\H - 1$ holds
	\begin{equation*}
	\rho^i_{gh} \rho^j_{gh} \equiv \sum_{r = 0}^{\min \{i, n_\G + n_\H - 1 - j\}} \binom{i}{r} \binom{j+r}{i} \rho^{j+r}_{gh} \quad \mod I_{\viz}.
	\end{equation*}	
\end{corollary}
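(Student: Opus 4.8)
The plan is to recognize this corollary as a direct specialization of Lemma~\ref{lemma:product-two-polynomials-reduced}. First I would fix a vertex $(g,h) \in V(\Bgr)$ and set $T = T_{gh}$. I then need to verify that $T$ satisfies the hypotheses of the lemma, namely that it is a non-empty subset of $V(\Bgr)$ of some cardinality $d$. Non-emptiness is immediate since $(g,h) \in T_{gh}$. For the cardinality I would invoke the decomposition $T_{gh} = V(\G^{h}) \cup V(\prescript{g}{}{\H})$ recorded directly after the definition of $T_{gh}$: the $G$-fiber $\G^{h}$ has $n_\G$ vertices, the $H$-fiber $\prescript{g}{}{\H}$ has $n_\H$ vertices, and the two fibers intersect in exactly the single vertex $(g,h)$. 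Hence, by inclusion--exclusion, $d = \lvert T_{gh} \rvert = n_\G + n_\H - 1$.

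Next I would observe that with this choice of $T$ the polynomial $\pi^k$ appearing in Lemma~\ref{lemma:product-two-polynomials-reduced} is literally the polynomial $\rho_{gh}^k$ of Definition~\ref{def:rho}: both are the sum, over all $k$-element subsets $S$ of $T_{gh}$, of the monomial $\prod_{(g',h') \in S} x_{g'h'}$. Moreover the admissible range $1 \le k \le d$ for the superscript of $\pi^k$ coincides with the range $k \le n_\G + n_\H - 1$ allowed in Definition~\ref{def:rho}, so the identification $\pi^k = \rho_{gh}^k$ is valid for every exponent occurring in the statement.

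With these two identifications in place, the claim follows by substituting $i$ and $j$ (with $1 \le i \le j \le d$) into the congruence of Lemma~\ref{lemma:product-two-polynomials-reduced}, replacing each $\pi^{j+r}$ by $\rho^{j+r}_{gh}$, and rewriting $d$ as $n_\G + n_\H - 1$ in the summation bound $\min\{i, d - j\}$. No additional computation is required.

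Since the corollary is an immediate specialization, I do not expect a genuine obstacle; the only point deserving a moment's care is the cardinality count $\lvert T_{gh} \rvert = n_\G + n_\H - 1$, i.e.\ remembering that the $G$-fiber and the $H$-fiber through $(g,h)$ overlap in precisely one vertex, so that $(g,h)$ is not counted twice. Everything else is a matter of matching the notation of Definition~\ref{def:rho} to that of Lemma~\ref{lemma:product-two-polynomials-reduced}.
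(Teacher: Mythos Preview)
Your proposal is correct and matches the paper's approach exactly: the paper does not even spell out a proof, but simply states that Lemma~\ref{lemma:product-two-polynomials-reduced} applied to the polynomials $\rho_{gh}^i$ yields the corollary. Your write-up fills in precisely the identifications ($T = T_{gh}$, $d = \lvert T_{gh}\rvert = n_\G + n_\H - 1$, $\pi^k = \rho_{gh}^k$) that are left implicit there.
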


One can observe that the form of products of such polynomials solely depends on 
$\lvert T_{gh} \rvert = n_\G + n_\H - 1$.
Later on, this fact allows us to derive the certificates for all graph classes 
$\G$ and~$\H$ with $k_\G = k_\H = 1$  and $n_\G + n_\H - 1 = d$ from the 
certificate of one of these graph classes for fixed $d$.


\subsection{Gröbner Basis of the Vizing Ideal}
\label{sec:groebner-case1}
Our first step on the way to compute an SOS-certificate is to determine a 
Gröbner basis of the Vizing ideal $I_\viz$.
Note that in our case of $k_\G = k_\H = 1$ we fix the minimum dominating sets 
to $D_\G = \{g_1\}$ and $D_\H = \{h_1\}$.
We start with the following lemma. 

\begin{lemma}
	\label{lemma:1-e-in-ideal}
	Let $k_\G = 1$, then $1 - e_{gg_1} \in I_\G \subseteq I_\viz$ holds for all $g \in V(\G) \setminus \{g_1\}$.
\end{lemma}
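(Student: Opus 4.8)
The plan is to observe that, in the special case $k_\G = 1$, the claimed polynomials are (up to the trivial simplification already noted after Definition~\ref{def:ideal-graph}) generators of $I_\G$ themselves, so there is almost nothing to prove. First I would recall Definition~\ref{def:ideal-graph}: the ideal $I_\G$ is generated by the three families~\eqref{subeqn:edges}, \eqref{subeqn:dom-edge} and~\eqref{subeqn:min-dom}. Since we have fixed $D_\G = \{g_1\}$, for each $g \in V(\G) \setminus \{g_1\}$ the corresponding polynomial in family~\eqref{subeqn:dom-edge} is
\[
\prod_{g' \in D_\G}(1 - e_{gg'}) = 1 - e_{gg_1},
\]
because the product ranges over the one-element set $D_\G = \{g_1\}$. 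Hence $1 - e_{gg_1}$ is literally one of the generators of $I_\G$, and in particular $1 - e_{gg_1} \in I_\G$ for every $g \in V(\G) \setminus \{g_1\}$.

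It then remains only to note the inclusion $I_\G \subseteq I_\viz$, which is immediate from the definition of the Vizing ideal, since $I_\viz$ is by definition generated by the elements of $I_\G$, $I_\H$ and $I_\Bgr$. Chaining the two facts yields $1 - e_{gg_1} \in I_\G \subseteq I_\viz$, as claimed.

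There is essentially no obstacle here; the only point requiring a little care is the bookkeeping of which generators survive when $k_\G = 1$ — namely that~\eqref{subeqn:min-dom} becomes void (its index set forces $|S| = k_\G - 1 = 0$, i.e.\ $S = \emptyset$, so the associated polynomial is the empty sum over $\emptyset$, multiplied out to $0$) and that~\eqref{subeqn:dom-edge} collapses to a single linear polynomial per vertex. This is exactly the simplification recorded in the remark following Definition~\ref{def:ideal-graph}, so the lemma merely makes that remark explicit as a statement that can be cited in the subsequent Gröbner basis computation.
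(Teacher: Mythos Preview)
Your proof is correct and follows essentially the same approach as the paper: both observe that for $k_\G = 1$ the polynomial $1 - e_{gg_1}$ is literally the generator~\eqref{subeqn:dom-edge}, so membership in $I_\G \subseteq I_\viz$ is immediate. The paper additionally singles out the case $n_\G = 1$ as trivial (there the index set $V(\G)\setminus\{g_1\}$ is empty), but your argument covers this vacuously anyway.
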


\begin{proof}
	For $n_\G = 1$ this holds trivially, 
	for $n_\G > 1$ it follows directly from~\eqref{subeqn:dom-edge}. 
\end{proof}

To describe the elements of the Gröbner basis, we define the following sets of 
vertices.

\begin{definition}
	For some vertex $(g,h) \in V(\Bgr)$ in the Cartesian product graph we 
	define the following subsets of $T_{gh}$ as
	\begin{align*}
	U^r_{gh} & =
	\begin{dcases}
	\{ (g,h') \in V(\Bgr) \ \vert \ h' \notin \{h_1,h\} \}, & \text{for } h \neq h_1\\
	\emptyset, & \text{for } h = h_1
	\end{dcases},\\
	U^c_{gh} & = 
	\begin{dcases}
	\{ (g',h) \in V(\Bgr) \ \vert \ g' \notin \{g_1,g\} \}, & \text{for } g \neq g_1 \\
	\emptyset, & \text{for } g = g_1
	\end{dcases}, \\
	U_{gh} & = U^r_{gh} \cup U^c_{gh} & \text{and} \\
	\overline{U}_{gh} & = T_{gh} \setminus U_{gh}.		
	\end{align*}
\end{definition}
The next example uses different selections of $(g,h)$ to illustrate the rather technical definitions of $U_{gh}$ and $\overline{U}_{gh}$.
\begin{example}
	Figure~\ref{fig:example-Tgh-And-Ugh} shows the vertices in $U_{gh}$ and $\overline{U}_{gh}$ in one graph $\bgr$ of the graph class $\Bgr$ with $n_\G = n_\H = 4$ and $k_\G = k_\H = 1$ for different choices of $(g,h) \in V(\bgr)$.
	The vertex $(g,h)$ is highlighted with a thicker border, the vertices~\tikz\draw[fill=pink] (0,0) circle (.9ex); are in the set $\overline{U}_{gh}$, whereas the ones marked as~\tikz\draw[fill=teal] (0,0) circle (.9ex); are in~$U_{gh}$.
	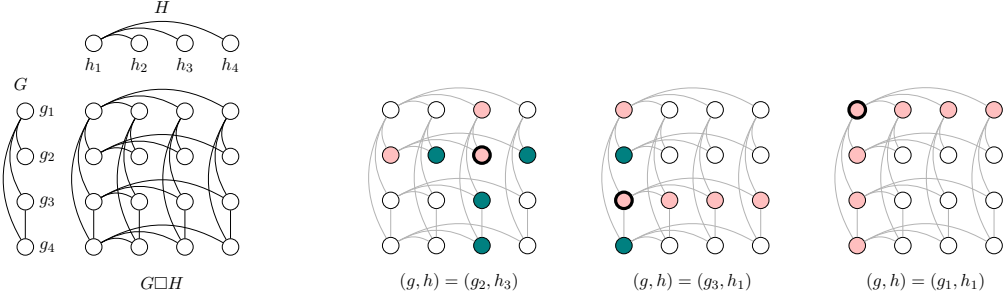
\begin{figure}[htbp]
				\begin{minipage}{0.37\textwidth}
			\centering
			\scalebox{0.6}%
			{		
				\begin{tikzpicture}[my node/.style={draw, circle}]
					\node[my node, fill = white] (4a) at (-1.5,0) {};
					\node[my node] (3a) at (-1.5,1) {};
					\node[my node, fill = white] (2a) at (-1.5,2) {};
					\node[my node] (1a) at (-1.5,3) {};
					\node[right=5pt] at (1a) {$g_1$}; 
					\node[right=5pt] at (2a) {$g_2$}; 
					\node[right=5pt] at (3a) {$g_3$}; 
					\node[right=5pt] at (4a) {$g_4$}; 
					
					\node[my node] (1b) at (0,4.5) {};
					\node[my node,  fill = white] (2b) at (1,4.5) {};
					\node[my node] (3b) at (2,4.5) {};
					\node[my node,  fill = white] (4b) at (3,4.5) {};
					\node[below=5pt] at (1b) {$h_1$};
					\node[below=5pt] at (2b) {$h_2$};
					\node[below=5pt] at (3b) {$h_3$};
					\node[below=5pt] at (4b) {$h_4$};
					
					\draw (1a) to[bend right](2a); \draw (1a) to[bend right] 
				(4a); \draw (1a) to[bend right] (3a); \draw (3a) -- (4a);
					\draw (1b) to[bend left](2b); \draw (1b) to[bend left] 
				(3b); \draw (1b) to[bend left] (4b);
					
					\node at (-1.6,3.6) {$G$};
					\node at (1.5,5.3) {$H$};
				
				\node[my node] (41) at (0,0) {};
				\node[my node] (42) at (1,0) {};
				\node[my node] (43) at (2,0) {};
				\node[my node] (44) at (3,0) {};
				
				\node[my node] (31) at (0,1) {};
				\node[my node] (32) at (1,1) {};
				\node[my node] (33) at (2,1) {};
				\node[my node] (34) at (3,1) {};
				
				\node[my node] (21) at (0,2) {};
				\node[my node] (22) at (1,2) {};
				\node[my node] (23) at (2,2) {};
				\node[my node] (24) at (3,2) {};
				
				\node[my node] (11) at (0,3) {};
				\node[my node] (12) at (1,3) {};
				\node[my node] (13) at (2,3) {};
				\node[my node] (14) at (3,3) {};
				
				\draw (11) to[bend left] (12); \draw (11) to[bend left](13); 
				\draw (11) to[bend left] (14);
				\draw (21) to[bend left] (22); \draw (21) to[bend left](23); 
				\draw (21) to[bend left] (24);
				\draw (31) to[bend left] (32); \draw (31) to[bend left](33); 
				\draw (31) to[bend left] (34);
				\draw (41) to[bend left] (42) ; \draw (41) to[bend left](43); 
				\draw (41) to[bend left] (44);
				
				\draw (11) to[bend right] (21); \draw (11) to[bend right](31); 
				\draw (11) to[bend right] (41); \draw (31)--(41);
				\draw (12) to[bend right] (22); \draw (12) to[bend right](32); 
				\draw (12) to[bend right] (42); \draw (32)--(42);
				\draw (13) to[bend right] (23); \draw (13) to[bend right](33); 
				\draw (13) to[bend right] (43); \draw (33)--(43);
				\draw (14) to[bend right] (24); \draw (14) to[bend right](34); 
				\draw (14) to[bend right] (44); \draw (34)--(44);
				\node at (1.5, -0.8) {$G \square H$};
				
				\end{tikzpicture}
			}
		\end{minipage}
		\begin{minipage}{0.2\textwidth}
			\centering
					\scalebox{0.6}%
			{		
			\begin{tikzpicture}[my node/.style={draw, circle}]
			%
			%
			%
			
			\node at (1.5,5.3) {\phantom{$H$}};
			
			\node[my node] (41) at (0,0) {};
			\node[my node] (42) at (1,0) {};
			\node[my node,fill=teal] (43) at (2,0) {};
			\node[my node] (44) at (3,0) {};
			
			\node[my node] (31) at (0,1) {};
			\node[my node] (32) at (1,1) {};
			\node[my node,fill=teal] (33) at (2,1) {};
			\node[my node] (34) at (3,1) {};
			
			\node[my node,fill=pink] (21) at (0,2) {};
			\node[my node,fill=teal] (22) at (1,2) {};
			\node[my node,fill=pink,line width=2pt] (23) at (2,2) {};
			\node[my node,fill=teal] (24) at (3,2) {};
			
			\node[my node] (11) at (0,3) {};
			\node[my node] (12) at (1,3) {};
			\node[my node,fill=pink] (13) at (2,3) {};
			\node[my node] (14) at (3,3) {};
			
\draw[myCol] (11) to[bend left] (12); 
\draw[myCol] (11) to[bend left](13); 
\draw[myCol] (11) to[bend left] (14);
\draw[myCol] (21) to[bend left] (22); 
\draw[myCol] (21) to[bend left](23); 
\draw[myCol] (21) to[bend left] (24);
\draw[myCol] (31) to[bend left] (32); 
\draw[myCol] (31) to[bend left](33); 
\draw[myCol] (31) to[bend left] (34);
\draw[myCol] (41) to[bend left] (42) ; 
\draw[myCol] (41) to[bend left](43); 
\draw[myCol] (41) to[bend left] (44);

\draw[myCol] (11) to[bend right] (21); 
\draw[myCol] (11) to[bend right](31); 
\draw[myCol] (11) to[bend right] (41); 
\draw[myCol] (31)--(41);
\draw[myCol] (12) to[bend right] (22); 
\draw[myCol] (12) to[bend right](32); 
\draw[myCol] (12) to[bend right] (42); 
\draw[myCol] (32)--(42);
\draw[myCol] (13) to[bend right] (23); 
\draw[myCol] (13) to[bend right](33); 
\draw[myCol] (13) to[bend right] (43); 
\draw[myCol] (33)--(43);
\draw[myCol] (14) to[bend right] (24); 
\draw[myCol] (14) to[bend right](34); 
\draw[myCol] (14) to[bend right] (44); 
\draw[myCol] (34)--(44);
			\node at (1.5, -0.8) {$(g,h) = (g_2,h_3)$};
			
			\end{tikzpicture}
		}
		\end{minipage}
		\begin{minipage}{0.2\textwidth}
			\centering
					\scalebox{0.6}%
			{		
			\begin{tikzpicture}[my node/.style={draw, circle}]
			
			%
			%
			%
			
			\node at (1.5,5.3) {\phantom{$H$}};
			
			\node[my node,fill=teal] (41) at (0,0) {};
			\node[my node] (42) at (1,0) {};
			\node[my node] (43) at (2,0) {};
			\node[my node] (44) at (3,0) {};
			
			\node[my node,fill=pink, line width=2pt] (31) at (0,1) {};
			\node[my node, fill=pink] (32) at (1,1) {};
			\node[my node, fill=pink] (33) at (2,1) {};
			\node[my node, fill=pink] (34) at (3,1) {};
			
			\node[my node, fill=teal] (21) at (0,2) {};
			\node[my node] (22) at (1,2) {};
			\node[my node] (23) at (2,2) {};
			\node[my node] (24) at (3,2) {};
			
			\node[my node, fill=pink] (11) at (0,3) {};
			\node[my node] (12) at (1,3) {};
			\node[my node] (13) at (2,3) {};
			\node[my node] (14) at (3,3) {};
			
\draw[myCol] (11) to[bend left] (12); 
\draw[myCol] (11) to[bend left](13); 
\draw[myCol] (11) to[bend left] (14);
\draw[myCol] (21) to[bend left] (22); 
\draw[myCol] (21) to[bend left](23); 
\draw[myCol] (21) to[bend left] (24);
\draw[myCol] (31) to[bend left] (32); 
\draw[myCol] (31) to[bend left](33); 
\draw[myCol] (31) to[bend left] (34);
\draw[myCol] (41) to[bend left] (42) ; 
\draw[myCol] (41) to[bend left](43); 
\draw[myCol] (41) to[bend left] (44);

\draw[myCol] (11) to[bend right] (21); 
\draw[myCol] (11) to[bend right](31); 
\draw[myCol] (11) to[bend right] (41); 
\draw[myCol] (31)--(41);
\draw[myCol] (12) to[bend right] (22); 
\draw[myCol] (12) to[bend right](32); 
\draw[myCol] (12) to[bend right] (42); 
\draw[myCol] (32)--(42);
\draw[myCol] (13) to[bend right] (23); 
\draw[myCol] (13) to[bend right](33); 
\draw[myCol] (13) to[bend right] (43); 
\draw[myCol] (33)--(43);
\draw[myCol] (14) to[bend right] (24); 
\draw[myCol] (14) to[bend right](34); 
\draw[myCol] (14) to[bend right] (44); 
\draw[myCol] (34)--(44);
			\node at (1.5, -0.8) {$(g,h)=(g_3,h_1)$};
			
			\end{tikzpicture}
		}
		\end{minipage}
		\begin{minipage}{0.2\textwidth} \centering	
					\scalebox{0.6}%
			{		
			\begin{tikzpicture}[my node/.style={draw, circle}]
			
			%
			%
			%
			%
			\node at (1.5,5.3) {\phantom{$H$}};
			
			\node[my node,fill=pink] (41) at (0,0) {};
			\node[my node] (42) at (1,0) {};
			\node[my node] (43) at (2,0) {};
			\node[my node] (44) at (3,0) {};
			
			\node[my node,fill=pink] (31) at (0,1) {};
			\node[my node] (32) at (1,1) {};
			\node[my node] (33) at (2,1) {};
			\node[my node] (34) at (3,1) {};
			
			\node[my node,fill=pink] (21) at (0,2) {};
			\node[my node] (22) at (1,2) {};
			\node[my node] (23) at (2,2) {};
			\node[my node] (24) at (3,2) {};
			
			\node[my node,fill=pink,line width=2pt] (11) at (0,3) {};
			\node[my node,fill=pink] (12) at (1,3) {};
			\node[my node,fill=pink] (13) at (2,3) {};
			\node[my node,fill=pink] (14) at (3,3) {};
			
			\draw[myCol] (11) to[bend left] (12); 
			\draw[myCol] (11) to[bend left](13); 
			\draw[myCol] (11) to[bend left] (14);
			\draw[myCol] (21) to[bend left] (22); 
			\draw[myCol] (21) to[bend left](23); 
			\draw[myCol] (21) to[bend left] (24);
			\draw[myCol] (31) to[bend left] (32); 
			\draw[myCol] (31) to[bend left](33); 
			\draw[myCol] (31) to[bend left] (34);
			\draw[myCol] (41) to[bend left] (42) ; 
			\draw[myCol] (41) to[bend left](43); 
			\draw[myCol] (41) to[bend left] (44);
			
			\draw[myCol] (11) to[bend right] (21); 
			\draw[myCol] (11) to[bend right](31); 
			\draw[myCol] (11) to[bend right] (41); 
			\draw[myCol] (31)--(41);
			\draw[myCol] (12) to[bend right] (22); 
			\draw[myCol] (12) to[bend right](32); 
			\draw[myCol] (12) to[bend right] (42); 
			\draw[myCol] (32)--(42);
			\draw[myCol] (13) to[bend right] (23); 
			\draw[myCol] (13) to[bend right](33); 
			\draw[myCol] (13) to[bend right] (43); 
			\draw[myCol] (33)--(43);
			\draw[myCol] (14) to[bend right] (24); 
			\draw[myCol] (14) to[bend right](34); 
			\draw[myCol] (14) to[bend right] (44); 
			\draw[myCol] (34)--(44);
			\node at (1.5, -0.8) {$(g,h)=(g_1,h_1)$};
			
			\end{tikzpicture}
		}
		\end{minipage}
		\caption{Illustration of  \revision{$G \square H$} and $U_{gh}$ and 
		$\overline{U}_{gh}$ for different 
		choices of $(g,h)$ \revision{in $G \square H$}}
		\label{fig:example-Tgh-And-Ugh}	
	\end{figure}
	
\end{example}

	Note that the vertices in $U_{gh}$ and  $\overline{U}_{gh}$  are independent of the choice of $\bgr$ and only depend on $\G$ and $\H$.
	More precisely, the vertices in $\overline{U}_{gh}$ are exactly those adjacent to $(g,h)$ in any graph $\bgr$ of the class $\Bgr$ and the vertex $(g,h)$ itself.
	This means that the vertices in $U_{gh}$ are those, which are not necessarily adjacent to $(g,h)$.

Besides the polynomials encountered so far, there is also a new type of 
polynomial in the Gröbner basis.
To express these, we make use of the Iverson notation.
	In particular, for a statement $A$ the value of the expression 
	$\iverson{A}$ is 1 if~$A$ 
	is true and 0 otherwise.
The following lemma shows that also these new polynomials are in 
$I_\viz$.
\begin{lemma}
	\label{lemma:gb-elements-in-ideal}
	For all vertices $(g,h) \in V(\Bgr)$ and for all choices of subsets $M \subseteq U_{gh}$, the polynomial
	\begin{align}
	\prod_{(g',h') \in \overline{U}_{gh}} (x_{g'h'} - 1) \prod_{(g,h')\in U^r_{gh}} \big(\iverson{(g&,h') \in M}(x_{gh'} - e_{hh'}) +  e_{hh'} - 1 \big) \ \, \times \nonumber \\
	&\times \prod_{(g',h)\in U^c_{gh}} \big(\iverson{(g',h) \in M}(x_{g'h} - e_{gg'} ) +  e_{gg'} - 1 \big) \label{eq:gb-long-poly}
	\end{align}
	is in the Vizing ideal $I_\viz$.	
\end{lemma}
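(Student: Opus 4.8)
The plan is to invoke Hilbert's Nullstellensatz: since $I_\viz$ is radical with finite variety (Lemma~\ref{lem:Ivizradicalandfinite}), by Theorem~\ref{thm:hilbert} it suffices to show that the polynomial~\eqref{eq:gb-long-poly} vanishes at every $z^* \in \Var(I_\viz)$. So I would fix such a $z^*$, fix $(g,h) \in V(\Bgr)$ and $M \subseteq U_{gh}$, and abbreviate the first product in~\eqref{eq:gb-long-poly} by $A$, the product over $U^r_{gh}$ by $B$, and the product over $U^c_{gh}$ by $C$; the goal is to exhibit, in every case, a factor among $A$, $B$, $C$ that evaluates to $0$ at $z^*$.

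The only facts about $z^*$ I would use are that all edge coordinates $e^*_{gg'}$, $e^*_{hh'}$ lie in $\{0,1\}$ (generator~\eqref{subeqn:edges}) and that the generator~\eqref{subeqn:dom-set} for the chosen $(g,h)$ vanishes, i.e.
\[
(1 - x^*_{gh}) \Bigl( \prod_{g' \neq g}(1 - e^*_{gg'}x^*_{g'h}) \Bigr)\Bigl( \prod_{h' \neq h}(1 - e^*_{hh'}x^*_{gh'}) \Bigr) = 0 .
\]
This forces one of: (i) $x^*_{gh} = 1$; (ii) $e^*_{gg'} = x^*_{g'h} = 1$ for some $g' \neq g$; (iii) $e^*_{hh'} = x^*_{gh'} = 1$ for some $h' \neq h$. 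In case (i) the factor $x_{gh}-1$ sits in $A$ (as $(g,h) \in \overline{U}_{gh}$), so $A(z^*)=0$. In case (ii) I would split on whether $g'=g_1$: if so then $g \neq g_1$, the vertex $(g_1,h)$ lies in $\overline{U}_{gh}$, the factor $x_{g_1h}-1$ sits in $A$, and $x^*_{g_1h}=1$ gives $A(z^*)=0$; otherwise $g' \notin \{g_1,g\}$, so $(g',h) \in U^c_{gh}$, and the associated factor of $C$, namely $\iverson{(g',h)\in M}(x^*_{g'h}-e^*_{gg'})+e^*_{gg'}-1$, equals $x^*_{g'h}-1=0$ if $(g',h)\in M$ and $e^*_{gg'}-1=0$ otherwise, so $C(z^*)=0$ in either subcase. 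Case (iii) is handled symmetrically, with $B$ playing the role of $C$ and $U^r_{gh}$ that of $U^c_{gh}$. As these cases are exhaustive, the polynomial vanishes on $\Var(I_\viz)$ and hence lies in $I_\viz$.

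I expect the main (though still routine) obstacle to be the bookkeeping of this case distinction: verifying that the vertex delivered by~\eqref{subeqn:dom-set} genuinely falls into $\overline{U}_{gh}$, $U^r_{gh}$, or $U^c_{gh}$ as claimed — in particular in the degenerate situations $g = g_1$ or $h = h_1$, where $U^c_{gh}$ or $U^r_{gh}$ (and part of $\overline{U}_{gh}$) collapse — and checking that the Iverson-bracket factor $\iverson{\,\cdot\,}(x-e)+e-1$ reduces to $x-1$ when the vertex is put into $M$ and to $e-1$ otherwise, so that $x^*=e^*=1$ drives it to $0$ in both subcases. Beyond this, no deeper argument seems needed: the whole statement is essentially a reformulation of the domination constraint~\eqref{subeqn:dom-set} once the vertex sets $\overline{U}_{gh}$, $U^r_{gh}$, $U^c_{gh}$ are unpacked.
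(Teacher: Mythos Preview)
Your proposal is correct and is essentially the paper's own argument: both apply Hilbert's Nullstellensatz for the radical ideal $I_\viz$ and reduce to the vanishing of the domination generator~\eqref{subeqn:dom-set} at the chosen vertex $(g,h)$. The only cosmetic difference is that the paper runs the contrapositive (assume the product~\eqref{eq:gb-long-poly} is nonzero at $z^*$, read off the forced $0$-values of the relevant $x^*$ and $e^*$, and conclude~\eqref{subeqn:dom-set} equals $1$), which sidesteps the $g=g_1$/$h=h_1$ bookkeeping you flag, since in that formulation all vertices $(g',h)$ respectively $(g,h')$ simply land in $\overline{U}_{gh}$ and are handled uniformly by the first product.
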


\begin{proof}
	Let $z^* \in \Var(I_\viz)$.
	By Theorem~\ref{thm:bij-var-iviz}, $z^*$ is in bijection to a triple $(G, H, D)$ with $G \in \G$, $H \in \H$ and $D$ is a dominating set of $\bgr$.
	Assume that $z^*$ is not a zero of \eqref{eq:gb-long-poly} for some vertex~$(g,h) \in V(\Bgr)$ and some $M \subseteq U_{gh}$.
	
	Since all edge and vertex variables are boolean, this implies that all variables corresponding to vertices in $\overline{U}_{gh}$, especially $x^*_{gh}$, have to be zero. For all other vertices in $T_{gh}$ we have that the vertex variable has to be zero if the vertex is in the set~$M$ and the edge variable indicating whether there is an edge between the vertex and $(g,h)$  has to be zero if the vertex is not in $M$.
	This implies that
	\begin{equation*}
	(1 - x_{gh}^*) \Bigg( \prod\limits_{\substack{g'\in V(\G) \\ g' \neq g }} (1 - e_{gg'}^*x_{g'h}^* )  \Bigg)
	\Bigg( \prod\limits_{\substack{h'\in V(\H) \\ h' \neq h }} (1 - e_{hh'}^*x_{gh'}^*)  \Bigg) = 1
	\end{equation*}
	and therefore, $z^*$ is not a zero of~\eqref{subeqn:dom-set}, thus $z^*$ can not be a common zero of the polynomials generating $I_\viz$, which contradicts $z^* \in \Var(I_\viz)$.
	Hence, the polynomial~\eqref{eq:gb-long-poly} vanishes on $\Var(I_\viz)$ 
	and with Hilbert's Nullstellensatz  \revision{for radical ideals} 
	(Theorem~\ref{thm:hilbert}) the claim follows, \revision{as $I_\viz$ is 
		radical (Lemma~\ref{lem:Ivizradicalandfinite})}.
\end{proof}

The next two lemmas will be the main ingredients to prove that the polynomials 
generating~$I_\viz$ can be generated by the polynomials of the prospective 
Gröbner basis.
\begin{lemma}
	\label{lemma:gb-step1}
	For all vertices $(g,h) \in V(\Bgr)$, the polynomial
	\begin{align}
	\label{eq:4.10a}
	\prod_{(g,h')\in U_{gh}^r} (e_{hh'}x_{gh'} - 1) \prod_{(g',h)\in U_{gh}^c} (e_{gg'} x_{g'h} - 1) 
	\end{align}
	is equal to
	{\small\begin{align}
		\label{eq:4.10b}
		\sum_{m = 0}^{\lvert U_{gh} \rvert} \ \sum_{\substack{M \subseteq U_{gh} \\ \lvert M \rvert = m}} \ \prod_{(g',h') \in M} (x_{g'h'} - 1)\prod_{(g,h')\in U^r_{gh}} \big(e_{hh'} - \iverson{(g,h') \notin M}\big) \prod_{(g',h)\in U^c_{gh}} \big(e_{gg'} - \iverson{(g',h) \notin M}\big).
		\end{align}}%
\end{lemma}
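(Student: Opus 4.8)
The plan is to prove that \eqref{eq:4.10a} equals \eqref{eq:4.10b} as a genuine polynomial identity in $P_\Bgr$; no reference to $I_\viz$ is needed, since this is a purely formal consequence of distributivity.

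First I would fix convenient notation. Because $U_{gh} = U^r_{gh} \cup U^c_{gh}$ is a disjoint union, I associate to each vertex $v \in U_{gh}$ a vertex variable $x_v$ and an edge variable $e_v$ by setting $(x_v,e_v) = (x_{gh'}, e_{hh'})$ when $v = (g,h') \in U^r_{gh}$, and $(x_v,e_v) = (x_{g'h}, e_{gg'})$ when $v = (g',h) \in U^c_{gh}$. Since $U^r_{gh}$ and $U^c_{gh}$ consist of pairwise distinct vertices and involve disjoint sets of variables, all the $x_v$ and $e_v$ for $v \in U_{gh}$ are distinct, and \eqref{eq:4.10a} is simply $\prod_{v \in U_{gh}} (e_v x_v - 1)$.

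Next I would use the elementary rewriting $e_v x_v - 1 = e_v(x_v - 1) + (e_v - 1)$ for each $v$, so that
\begin{equation*}
\prod_{v \in U_{gh}} (e_v x_v - 1) = \prod_{v \in U_{gh}} \big( e_v(x_v - 1) + (e_v - 1) \big),
\end{equation*}
and then expand the right-hand product. Each term of the expansion corresponds to choosing, for every $v \in U_{gh}$, one of the two summands $e_v(x_v-1)$ or $e_v-1$; recording the set $M \subseteq U_{gh}$ of those $v$ for which the first summand is selected gives
\begin{equation*}
\prod_{v \in U_{gh}} (e_v x_v - 1) = \sum_{M \subseteq U_{gh}} \ \prod_{v \in M} e_v(x_v - 1) \ \prod_{v \in U_{gh} \setminus M} (e_v - 1).
\end{equation*}
Finally I would reconcile this with \eqref{eq:4.10b}: pulling the factors $x_v-1$ to the front, the edge-variable part of the $M$-term is $\prod_{v \in M} e_v \cdot \prod_{v \notin M}(e_v-1)$, and for a single $v$ this factor equals $e_v$ if $v \in M$ and $e_v-1$ if $v \notin M$, i.e.\ exactly $e_v - \iverson{v \notin M}$ in both cases; splitting $U_{gh}$ back into $U^r_{gh}$ and $U^c_{gh}$, rewriting $\iverson{v\notin M}$ as $\iverson{(g,h')\notin M}$ resp.\ $\iverson{(g',h)\notin M}$, and grouping the subsets $M$ by cardinality $m=\lvert M\rvert$ turns the sum into \eqref{eq:4.10b} verbatim.

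There is no real obstacle here; the only point demanding care is the bookkeeping with the Iverson brackets together with the verification that $U^r_{gh}$ and $U^c_{gh}$ contribute disjoint variables, so that the single product $\prod_{v \in U_{gh}}$ legitimately splits into the two products appearing on both sides of the claimed identity. This formal rearrangement is precisely what is needed to pass from \eqref{eq:4.10a} to a form amenable to the next reduction step toward the Gröbner basis.
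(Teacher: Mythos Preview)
Your proof is correct and considerably more direct than the paper's. The key step you use, the identity $e_v x_v - 1 = e_v(x_v-1) + (e_v-1)$, reduces the lemma to a single expansion via distributivity, after which the Iverson-bracket form of \eqref{eq:4.10b} is immediate. The paper instead expands \eqref{eq:4.10a} using $\prod_i (y_i-1) = \sum_M (-1)^{n-|M|}\prod_{i\in M} y_i$, separately expands each summand of \eqref{eq:4.10b} by applying the same identity to $\prod_{(g',h')\in M}(x_{g'h'}-1)$, matches the ``leading'' terms on both sides, and is then left with a residual polynomial $p = \sum_M p_M$ that must be shown to vanish; this last step requires a coefficient-counting argument and the binomial identity $\sum_i (-1)^i\binom{n}{i}=0$. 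Your route avoids this cancellation entirely by never breaking the factors $(x_v-1)$ apart, which is exactly what makes the Iverson-bracket description of \eqref{eq:4.10b} transparent. The only (minor) remark is that the distinctness of the variables $x_v$, $e_v$ you verify is not actually needed for the algebraic identity---distributivity holds regardless---though it does confirm that no hidden simplification occurs when the single product over $U_{gh}$ is split back into the two products over $U^r_{gh}$ and $U^c_{gh}$.
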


\begin{proof}
	Using the fact that
	\begin{equation}
	\label{eq:factor-out-product-1}
	\prod_{i=1}^n (y_i - 1) = \sum_{m = 0}^n \  \sum_{\substack{M \subseteq \{1,\dots, n\} \\ \lvert M \rvert = m }} (-1)^{n-m} \prod_{i \in M} y_i
	\end{equation}
	holds for any variables $y_1, \dots, y_n$ by expanding the product, we get that~\eqref{eq:4.10a} is equal to
	\begin{equation*}
	\sum_{m = 0}^{\lvert U_{gh} \rvert} \ \sum_{\substack{M \subseteq U_{gh} \\ 
	\lvert M \rvert = m}} 
	(-1)^{\lvert U_{gh}  \rvert - m} \prod_{(g,h')\in U^r_{gh} \cap M} 
	e_{hh'}x_{gh'} \prod_{(g',h)\in U^c_{gh} \cap M} e_{gg'}x_{g'h}%
	.
	\end{equation*}
	
	Then, 
	we consider one summand 
	of~\eqref{eq:4.10b} for a fixed~$M\subseteq U_{gh}$, so
	\begin{equation}
	\label{eq:lemma-gbstp1-product1}
	\prod_{(g',h') \in M}(x_{g'h'} - 1) \prod_{(g,h')\in U^r_{gh}} \big(e_{hh'} 
	- \iverson{(g,h') \notin M}\big) \prod_{(g',h)\in U^c_{gh}} \big(e_{gg'} - 
	\iverson{(g',h) \notin M}\big).
	\end{equation}
%
	Applying~\eqref{eq:factor-out-product-1} to the first product in~\eqref{eq:lemma-gbstp1-product1} yields that~\eqref{eq:lemma-gbstp1-product1}
	equals
	\begin{align*}
	\Bigg(\prod_{(g',h') \in M}x_{g'h'} &+  \sum_{k = 0}^{m - 1} \  
	\sum_{\substack{K \subseteq M \\ \lvert K \rvert = k}} (-1)^{m-k} 
	\prod_{(g',h') \in K} x_{g'h'}  \Bigg) \ \times \\
	&\times \prod_{(g,h')\in U^r_{gh}} \big(e_{hh'} - \iverson{(g,h') \notin M}\big) \prod_{(g',h)\in U^c_{gh}} \big(e_{gg'} - \iverson{(g',h) \notin M}\big).
	\end{align*}	
	Since all vertices in $M$ are either in $U_{gh}^r$ or in $U_{gh}^c$, we can 
	rewrite this polynomial as
	{\small\begin{align}
		\label{eq:polynomialBeforepM}
		&\prod_{(g,h')\in U^r_{gh} \cap M} e_{hh'}x_{gh'} \prod_{(g',h)\in U^c_{gh} \cap M} e_{gg'}x_{g'h} \prod_{(g,h')\in U^r_{gh}\setminus M} (e_{hh'} - 1) \prod_{(g',h)\in U^c_{gh} \setminus M} (e_{gg'} - 1) + \\
		&\qquad +\Bigg( \sum_{k = 0}^{m-1} \sum_{\substack{K \subseteq M 
		\nonumber \\ 
		\lvert K 
		\rvert = k}} (-1)^{m-k} \prod_{(g',h') \in K} x_{g'h'}  \Bigg) \times \\
		& \qquad \qquad \times \prod_{(g,h')\in U^r_{gh}} \big(e_{hh'} - 
		\iverson{(g,h') \notin 
		M}\big) \prod_{(g',h)\in U^c_{gh}} \big(e_{gg'} - \iverson{(g',h) 
		\notin M}\big), \nonumber
		\end{align}}%
	which, using the fact that $m = \lvert M \rvert$, can be further rewritten 
	as
	\begin{equation}
	\label{eq:prodMinusOneProdProd}
	(-1)^{\lvert U_{gh} \rvert - m} \prod_{(g,h')\in U^r_{gh} \cap M} 
	e_{hh'}x_{gh'} \prod_{(g',h)\in U^c_{gh} \cap M} e_{gg'}x_{g'h} + p_M
	\end{equation}
	for some polynomial $p_M$ that depends on the set $M$ 
	and that captures the whole second summand of~\eqref{eq:polynomialBeforepM} 
	and every part of the first summand  of~\eqref{eq:polynomialBeforepM} that 
	does not contain only $-1$ in the third and fourth factor after expanding 
	the third and the fourth factor.


	To finish the proof, it remains to show 
	that the polynomial
	\begin{equation*}
	p = \sum_{m = 0}^{\lvert U_{gh} \rvert}\  \sum_{\substack{M \subseteq U_{gh} \\ \lvert M \rvert = m}} p_M
	\end{equation*}
	is equal to the zero polynomial.	
	
	All monomials in the expanded expression of $p$ 
	have in common that the number of occurring edge variables is greater than 
	the number of occurring vertex variables. 
	Indeed, 
	when expanding the product~\eqref{eq:lemma-gbstp1-product1}
	we get that 	
	if a vertex variable is a factor of a monomial, 
	the corresponding edge variable is a factor of this monomial too.
	Moreover, all monomials that have the same number of edge and vertex 
	variables are captured within the first summand of~\eqref{eq:prodMinusOneProdProd}.	
	As a result, all monomials in~$p$ have less vertex variables than edge 
	variables.
	
	\newcommand{\monomial}{q}
	
	Now, choose some fixed monomial $\monomial$
	in $p$ of degree $k+ \ell$ that 
	is a 
	product of $k$ vertex and~$\ell$ edge variables, so  
	$0 \leq k < \ell \leq \lvert U_{gh} \rvert$ holds.
	To determine the coefficient of $\monomial$ in~$p$ 
	we count the number of different choices of the set~$M$ such that 
	$\monomial$ is a summand in~$p_M$. 
	Clearly, if $|M| = m$, then $m\geq k$ has to 
	hold. 
	The~$k$ vertex variables in $\monomial$ determine~$k$ vertices that have 
	to be in~$M$.
	Note that the corresponding edge variables are in $\monomial$ too.
	Then, there are~$\ell-k$ edge variables in $\monomial$ left that do not 
	correspond to a vertex variable.
	Of these~$\ell-k$ edge variables, $m-k$ correspond to a vertex in~$M$. 
	Therefore, there are $\binom{\ell-k}{m-k}$ different choices of~$M 
	\subseteq U_{gh}$ with~$\lvert M \rvert = m$ such that $\monomial$ is a 
	summand of $p_M$ with coefficient $(-1)^{m-k+\lvert U_{gh}\rvert - \ell}$.
	%
	Hence, the coefficient of $\monomial$ in $p$  equals
	\begin{equation*}
	\sum_{m = k}^\ell \binom{\ell - k}{m-k} (-1)^{m - k + \lvert U_{gh} \rvert  -  \ell}.
	\end{equation*}
	Substituting $i = m-k$ and $n = \ell-k$, this 
	coefficient 
	can be written as
	\begin{equation*}
	\sum_{i= 0}^n \binom{n}{i} (-1)^{i + \lvert U_{gh} \rvert - \ell} = (-1)^{\lvert U_{gh} \rvert - \ell} \ \sum_{i= 0}^n (-1)^{i} \binom{n}{i}.
	\end{equation*}
	Using the identity
	\begin{equation*}
	\sum_{i= 0}^n (-1)^i \binom{n}{i} = 0,
	\end{equation*}
	we get that all coefficients are zero and therefore $p$ is in fact the zero polynomial, which completes the proof.
\end{proof}

\begin{lemma}
	\label{lemma:gb-step2}
	Let $(g,h) \in V(\Bgr)$, then the equations
	\begin{align*}
	(e_{g_1g'}x_{g'h} - 1) p &= (x_{g'h} - 1)p +  x_{g'h} (e_{g_1g'} - 1)p \text{\quad and}\\
	(e_{h_1h'}x_{gh'} - 1) p  &=  (x_{gh'} - 1)p + x_{gh'} (e_{h_1h'} - 1)p
	\end{align*}
	hold for all $g' \in V(\G)\setminus\{g_1\}$, $h' \in V(\H)\setminus\{h_1\}$ and $p \in P_\Bgr$.
\end{lemma}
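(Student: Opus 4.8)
The plan is to verify both equations as plain polynomial identities in $P_\Bgr$; unlike most of the earlier lemmas in this section, no appeal to the Vizing ideal $I_\viz$ or to Hilbert's Nullstellensatz is needed, since the assertions are honest equalities rather than congruences modulo an ideal. The hypotheses $g' \in V(\G)\setminus\{g_1\}$ and $h' \in V(\H)\setminus\{h_1\}$ serve only to guarantee that the edge variables $e_{g_1g'}$ and $e_{h_1h'}$ actually lie in $e_\G$ and $e_\H$, so that the expressions are well-defined elements of $P_\Bgr$.

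For the first equation I would simply expand the right-hand side using distributivity, which gives
\begin{align*}
(x_{g'h} - 1)p + x_{g'h}(e_{g_1g'} - 1)p &= x_{g'h}p - p + e_{g_1g'}x_{g'h}p - x_{g'h}p \\
&= e_{g_1g'}x_{g'h}p - p = (e_{g_1g'}x_{g'h} - 1)p,
\end{align*}
so that the two copies of $x_{g'h}p$ cancel and the claimed identity follows. The second equation is obtained from the first by replacing $g'$ by $h'$, $g_1$ by $h_1$, and $x_{g'h}$ by $x_{gh'}$, i.e.\ it is literally the same computation carried out with the $\H$-indexed variables in place of the $\G$-indexed ones.

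Since the whole argument is a single application of the distributive law in a commutative polynomial ring, there is no genuine obstacle to overcome here. The statement is isolated as a separate lemma only because the two rewriting rules it provides will be invoked repeatedly — in combination with Lemma~\ref{lemma:gb-step1} and Lemma~\ref{lemma:1-e-in-ideal} — to express the generators~\eqref{subeqn:dom-set} of $I_\viz$ in terms of the polynomials of the prospective reduced Gröbner basis.
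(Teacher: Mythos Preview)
Your proof is correct and matches the paper's own argument, which simply notes that the identities are ``straightforward to check.'' You have supplied exactly the one-line distributive-law expansion that this phrase invites, together with some helpful context about the role of the hypotheses and the later use of the lemma.
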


\begin{proof}
	This is straightforward to check.
\end{proof}

With all the results so far, we are now able to state the unique reduced 
Gröbner basis of~$I_\viz$ for the total degree lexicographical ordering.
\begin{theorem}
	\label{thm:gb}
	Let $k_\G = k_\H = 1$, then the reduced Gröbner basis of $I_\viz$ with respect to a total degree lexicographical term ordering consists of the  polynomials
	\begin{subequations}
		\begin{align}
		e_{g_1g} - 1 && \text{for all } g \in V(\G) \setminus \{g_1\}, \label{subeqn:gb_first}\\
		e_{h_1h} - 1 && \text{for all } h \in V(\H) \setminus \{h_1\}, \label{subeqn:gb_second}\\
		e_{gg'} (e_{gg'} - 1) && \text{for all } g \neq g' \in V(\G) \setminus \{g_1\}, \\
		e_{hh'} (e_{hh'} - 1) && \text{for all } h \neq h' \in V(\H) \setminus \{h_1\},\\
		x_{gh} (x_{gh} - 1) && \text{for all } (g,h) \in V(\Bgr), \label{subeqn:gb5}
		\end{align}
		and
		\begin{align}
		\revision{b_{gh,M} =} \prod_{(g',h') \in \overline{U}_{gh}} (x_{g'h'} - 1) \prod_{(g,h')\in U^r_{gh}}& \big(\iverson{(g,h') \in M}(x_{gh'} - e_{hh'}) +  e_{hh'} - 1 \big) \ \, \times \nonumber \\
		&\times \prod_{(g',h)\in U^c_{gh}} \big(\iverson{(g',h) \in M}(x_{g'h} - e_{gg'} ) +  e_{gg'} - 1 \big) \label{subeqn:gb-long}
		\end{align}
		for all subsets $M \subseteq U_{gh}$ for all choices of $(g,h) \in V(\Bgr)$.
	\end{subequations}
	
\end{theorem}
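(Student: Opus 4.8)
\subsubsection*{Proof proposal}

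The plan is to verify four things about the stated set $B$ of polynomials: that $B\subseteq I_\viz$, that $\langle B\rangle=I_\viz$, that $B$ is a Gröbner basis for the total degree lexicographical order, and that it is reduced. The containment $B\subseteq I_\viz$ is essentially already in place: \eqref{subeqn:gb5} and the boolean edge polynomials with all indices different from $g_1$ and $h_1$ are among the generators of $I_\viz$; \eqref{subeqn:gb_first} and \eqref{subeqn:gb_second} lie in $I_\G$, respectively $I_\H$, by Lemma~\ref{lemma:1-e-in-ideal}; and the polynomials $b_{gh,M}$ of \eqref{subeqn:gb-long} lie in $I_\viz$ by Lemma~\ref{lemma:gb-elements-in-ideal}.

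For $\langle B\rangle=I_\viz$ I would check that every generator of $I_\viz$ lies in $\langle B\rangle$. The boolean polynomials $e_{gg'}(e_{gg'}-1)$ and $e_{hh'}(e_{hh'}-1)$ having an index $g_1$ or $h_1$, and the polynomials $1-e_{gg_1}$, $1-e_{hh_1}$ coming from \eqref{subeqn:dom-edge}, are visibly multiples of \eqref{subeqn:gb_first} or \eqref{subeqn:gb_second}, so the only substantial case is the domination polynomial \eqref{subeqn:dom-set} at a vertex $(g,h)$. Using Lemma~\ref{lemma:gb-step2} together with reduction modulo \eqref{subeqn:gb_first} and \eqref{subeqn:gb_second}, one rewrites every factor of \eqref{subeqn:dom-set} whose vertex lies in $\overline{U}_{gh}$ — that is, the factor $1-x_{gh}$ and all factors involving the edge variable of $g_1$ or of $h_1$ — as $1-x_{g'h'}$, so that, up to the sign $(-1)^{n_\G+n_\H-1}$ and modulo $\langle B\rangle$, \eqref{subeqn:dom-set} becomes
\[
\prod_{(g',h')\in\overline{U}_{gh}}(x_{g'h'}-1)\ \prod_{(g,h')\in U^r_{gh}}(e_{hh'}x_{gh'}-1)\ \prod_{(g',h)\in U^c_{gh}}(e_{gg'}x_{g'h}-1).
\]
Lemma~\ref{lemma:gb-step1} then turns the last two products into the sum over $M\subseteq U_{gh}$ of the polynomials listed there, and comparing that sum termwise with \eqref{subeqn:gb-long} shows that, after multiplying by $\prod_{(g',h')\in\overline{U}_{gh}}(x_{g'h'}-1)$, the summand for $M$ equals $b_{gh,M}$ times the edge monomial $\prod_{(g,h')\in M\cap U^r_{gh}}e_{hh'}\prod_{(g',h)\in M\cap U^c_{gh}}e_{gg'}$. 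Hence \eqref{subeqn:dom-set} is a $P_\Bgr$-combination of the $b_{gh,M}$ and lies in $\langle B\rangle$.

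For the Gröbner basis property I would argue via a dimension count. First record the leading terms: since for a total degree order each linear factor of $b_{gh,M}$ contributes its degree-one monomial, $\mathrm{LT}(b_{gh,M})$ is the unique monomial of degree $n_\G+n_\H-1=\lvert T_{gh}\rvert$ occurring in $b_{gh,M}$, namely $\prod_{(g',h')\in\overline{U}_{gh}\cup M}x_{g'h'}$ times $\prod_{(g,h')\in U^r_{gh}\setminus M}e_{hh'}\prod_{(g',h)\in U^c_{gh}\setminus M}e_{gg'}$ (so it does not depend on the lexicographic tie-break), while the remaining listed polynomials have the obvious leading terms $e_{g_1g}$, $e_{h_1h}$, $e_{gg'}^2$, $e_{hh'}^2$, $x_{gh}^2$. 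Since $\langle B\rangle=I_\viz$ and $I_\viz$ is radical with finite variety (Lemma~\ref{lem:Ivizradicalandfinite}), we have $\dim_\K P_\Bgr/I_\viz=\lvert\Var(I_\viz)\rvert$, and since $B$ generates $I_\viz$ the monomials not divisible by any leading term of an element of $B$ (the standard monomials) span $P_\Bgr/I_\viz$; it therefore suffices to show that there are exactly $\lvert\Var(I_\viz)\rvert$ of them, for then they form a basis and $\langle\mathrm{LT}(B)\rangle=\langle\mathrm{LT}(I_\viz)\rangle$. Such a standard monomial is squarefree and contains no $e_{g_1g}$ or $e_{h_1h}$, so it factors as $\mathfrak{m}^\G_e\,\mathfrak{m}^\H_e\,\mathfrak{m}_x$ encoding a pair $(G,H)\in\G\times\H$ together with a set $D\subseteq V(\bgr)$; from the leading terms above one reads off that it is divisible by some $\mathrm{LT}(b_{gh,M})$ exactly when $\overline{U}_{gh}\subseteq D$ and every vertex of $U_{gh}$ either lies in $D$ or is adjacent to $(g,h)$ in $\bgr$. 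The point is now that complementing $G$ and $H$ within their non-dominating vertices (which keeps $g_1$ and $h_1$ dominating, hence keeps the pair in $\G\times\H$) swaps, for every $(g,h)$, the neighbours and the non-neighbours of $(g,h)$ inside $U_{gh}$; this turns the condition ``$\mathfrak m$ is standard'' for $(G,H)$ into the condition ``$D$ is a dominating set of $\bgr$'' for the complemented pair, so that Theorem~\ref{thm:bij-var-iviz} yields a bijection between the standard monomials and $\Var(I_\viz)$, which is exactly what is needed.

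Finally, reducedness is a finite, routine check: all listed polynomials have leading coefficient $1$; a boolean generator has all monomials of degree at most $2$, none of which is divisible by the degree-$(n_\G+n_\H-1)$ leading term of a $b_{gh,M}$ nor by another boolean leading term; and each $b_{gh,M}$ is squarefree, contains no variable $e_{g_1g}$ or $e_{h_1h}$, and has all of its non-leading monomials of degree strictly below $n_\G+n_\H-1$, so none of its monomials is divisible by the leading term of a distinct element (and distinct $b_{gh,M}$ have distinct leading terms, since the leading term already recovers the factorization of $b_{gh,M}$ into linear factors). I expect the Gröbner basis step to be the main obstacle, in particular pinning down the divisibility criterion for the $\mathrm{LT}(b_{gh,M})$ and converting the resulting description of the standard monomials into the clean count via the graph-complementation bijection with $\Var(I_\viz)$; the careful bookkeeping of the cofactors coming from Lemma~\ref{lemma:gb-step2} in the $\langle B\rangle=I_\viz$ step is the other spot where attention is needed.
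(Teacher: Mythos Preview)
Your containment $B\subseteq I_\viz$, your derivation of $\langle B\rangle=I_\viz$ (via Lemma~\ref{lemma:gb-step2} to strip the $e_{g_1\cdot}$ and $e_{h_1\cdot}$ factors from \eqref{subeqn:dom-set}, then Lemma~\ref{lemma:gb-step1} to expand into a $P_\Bgr$-combination of the $b_{gh,M}$), and your reducedness check are essentially the same as in the paper. Where you diverge is the Gröbner basis step: the paper argues directly that the leading term of every element of $I_\viz$ is divisible by some $\mathrm{LT}(b)$ with $b\in B$, whereas you count standard monomials and match this number to $\lvert\Var(I_\viz)\rvert=\dim_\K P_\Bgr/I_\viz$. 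Your route is a legitimate alternative (and arguably more transparent, since the paper's reduction of that divisibility statement to the original generators of $I_\viz$ alone is not a standard inference).

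However, your bijection is not quite right. Your divisibility criterion is correct: a squarefree $\mathfrak m$ encoding $(G,H,D)$ is divisible by some $\mathrm{LT}(b_{gh,M})$ precisely when $\overline{U}_{gh}\subseteq D$ and every $v\in U_{gh}$ satisfies $v\in D$ or $v$ is adjacent to $(g,h)$ in $G\square H$. Negating this, $\mathfrak m$ is standard iff for every $(g,h)$ either $\overline{U}_{gh}\not\subseteq D$ or some $v\in U_{gh}$ has $v\notin D$ and $v$ non-adjacent to $(g,h)$. Complementing only $G$ and $H$ turns ``non-adjacent'' into ``adjacent'', but the conditions $\overline{U}_{gh}\not\subseteq D$ and $v\notin D$ do \emph{not} become the domination conditions $\overline{U}_{gh}\cap D\neq\emptyset$ and $v\in D$. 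You must also complement $D$: setting $D'=V(\Bgr)\setminus D$ gives $\overline{U}_{gh}\not\subseteq D\iff \overline{U}_{gh}\cap D'\neq\emptyset$ and $v\notin D\iff v\in D'$, and then the standardness condition reads exactly ``$D'$ dominates every vertex of $G'\square H'$''. With this correction the map $(G,H,D)\mapsto(G',H',V(\Bgr)\setminus D)$ is the bijection you want between standard monomials and $\Var(I_\viz)$ (via Theorem~\ref{thm:bij-var-iviz}), and the counting argument goes through.
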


	Before we start with the proof, we want to give combinatorial interpretation 
	to~\eqref{subeqn:gb-long}.
	Let $(g,h)$ be a fixed vertex in the Cartesian product graph $\bgr$.
	As already mentioned, the vertices in $\overline{U}_{gh}$ are $(g,h)$ and all vertices that are adjacent to $(g,h)$ in all product graphs~$\bgr$ of the graph class $\Bgr$.
	
	Let $D$ be a dominating set (of any size) in~$\bgr$. If there is a vertex in~$\overline{U}_{gh} \cap D$, the vertex~$(g,h)$ is dominated by this vertex in~$D$.
	If this is not the case, then there has to be a vertex in~$U_{gh} \cap D$, that is adjacent to~$(g,h)$.
	\revision{The polynomial $b_{gh,U_{gh}}$} ensures that at least one vertex in $U_{gh}$ is in $D$. The above choice of $M$ does not assure that a vertex adjacent to $(g,h)$ in $U_{gh}$ is in $D$. Indeed, assume that there is no vertex in~$D \cap U_{gh}$ that is adjacent to $(g,h)$. 
	\revision{Then all vertex variables occurring in $b_{gh, U_{gh}\setminus D}$ are zero because the corresponding vertices are not in~$D$.}
	Hence, there has to be at least one edge between $(g,h)$ and a vertex in~$D \cap U_{gh}$. This ensures that the vertex $(g,h)$ is dominated by $D$.
	
	To sum this up, the polynomials in~\eqref{subeqn:gb-long} in the Gröbner 
	basis guarantee that $(g,h)$ is dominated by a vertex in $D$.
	Next, we present the proof of Theorem~\ref{thm:gb}.

\begin{proof}[Proof of Theorem~\ref{thm:gb}]
	Let $B$ be the set of all polynomials in the claimed reduced Gröbner basis.
	First, we will show that the polynomials in $B$ are indeed in $I_\viz$.
	Then, we will show that the leading term of each polynomial $f$ in $I_\viz$ is divisible by the leading term of some polynomial in $B$.
	The third step in the proof will be to show that the polynomials in $B$ are a generating system of $I_\viz$, hence after this step we know that~$B$ is a Gröbner basis.
	The last step will be to show that $B$ is even a reduced Gröbner basis.
	
	From~\eqref{subeqn:edges}, \eqref{subeqn:vertices} and 
	Lemma~\ref{lemma:1-e-in-ideal} and~\ref{lemma:gb-elements-in-ideal} we get 
	that all polynomials in $B$ are in~$I_\viz$, so the first step of the proof 
	is easily finished.

	For the second step, let us consider the divisibility of the leading terms.
	We show that the desired property holds for each of the polynomials we used 
	to generate $I_\viz$, which then implies the property for all polynomials 
	$f$ in $I_\viz$. 
	For the polynomials in \eqref{subeqn:edges}, \eqref{subeqn:dom-edge} and \eqref{subeqn:vertices} this is trivial.
	Since $k_\G = k_\H = 1$ in our setting, there are no polynomials in \eqref{subeqn:min-dom}.
	Furthermore, for all $(g,h) \in V(\Bgr)$ the leading term of~\eqref{subeqn:dom-set}, that is
	\begin{equation*}
	(-1)^{n_\G + n_\H - 1}	x_{gh}\Bigg( \prod\limits_{\substack{g'\in V(\G) \\ g' \neq g }}  e_{gg'}x_{g'h} \Bigg)
	\Bigg( \prod\limits_{\substack{h'\in V(\H) \\ h' \neq h }} e_{hh'}x_{gh'}  \Bigg),
	\end{equation*}
	is divisible by the leading term of \eqref{subeqn:gb-long} for $M = U_{gh}$, which equals $\rho^{n_\G + n_\H - 1}_{gh}$.
	
	As a third step, we prove that $B$ is a generating system of $I_\viz$ by representing the polynomials of Definition~\ref{def:ideal-graph} and \ref{def:ideal-prodgraph} in terms of the polynomials in $B$.
	This is again easy to check, except for the polynomials~\eqref{subeqn:dom-set}.
	For a fixed vertex $(g,h) \in V(\Bgr)$, we will build the 
	polynomial~\eqref{subeqn:dom-set}
	step by step using polynomials of $B$.
	First, we sum up \revision{$b_{gh,M}$} multiplied by 
	\begin{equation*}
	\prod_{(g,h')\in U^r_{gh} \cap M} e_{hh'}  \prod_{(g',h)\in U^c_{gh} \cap M} e_{gg'} \in P_\Bgr
	\end{equation*}
	\revision{for all possible subsets $M$ of $U_{gh}$.}
	\revision{Since $b_{gh,M} \in B$}, this sum can be represented by polynomials in~$B$ and 
	equals~\eqref{eq:4.10b} multiplied with \[\prod_{(g',h')\in 
	\overline{U}_{gh}}(x_{g'h'} - 1).\]
	Due to Lemma~\ref{lemma:gb-step1}, this sum is also equal to
	\begin{equation}
	\label{eq:gb-proof-before-last-step}
	\prod_{(g',h')\in \overline{U}_{gh}}(x_{g'h'} - 1) \prod_{(g,h')\in U_{gh}^r} (e_{hh'}x_{gh'} - 1) \prod_{(g',h) \in U_{gh}^c} (e_{gg'} x_{g'h} - 1).
	\end{equation}

	
	Next, we iteratively apply Lemma~\ref{lemma:gb-step2} for all vertices in $\overline{U}_{gh}\setminus \{(g,h)\}$ to obtain from~\eqref{eq:gb-proof-before-last-step} the polynomial
	\begin{equation*}
	(x_{gh} - 1) \Bigg( \prod\limits_{\substack{g'\in V(\G) \\ g' \neq g }} (e_{gg'}x_{g'h} - 1)  \Bigg)
	\Bigg( \prod\limits_{\substack{h'\in V(\H) \\ h' \neq h }} (e_{hh'}x_{gh'} 
	- 1)  \Bigg)
	\end{equation*}
	in the following way.
	First we fix a vertex $(g',h') \in \overline{U}_{gh}\setminus \{(g,h)\}$.
	Let the polynomial~$p$ be such that~\eqref{eq:gb-proof-before-last-step} equals $(x_{g'h'} - 1) p$.
	By $e$ we denote the edge variable corresponding to $(g',h')$, that is~$e_{g_1g'}$ if $h'=h$ and $e_{h_1h'}$ otherwise.
	Now, we add to $(x_{g'h'}-1)p$ the polynomial $(e-1)x_{g'h'}p$ to obtain $(ex_{g'h'}-1)p$ by Lemma~\ref{lemma:gb-step2}.
	Since we added a polynomial
	from \eqref{subeqn:gb_first} or  \eqref{subeqn:gb_second} times a polynomial in $P_\Bgr$ to a polynomial generated by $B$, the resulting polynomial is again generated by $B$.
	Based on this new polynomial, we choose the next vertex in $\overline{U}_{gh}\setminus \{(g,h)\}$ and apply the same arguments as before to this polynomial.
	This is done for all vertices in $\overline{U}_{gh}\setminus \{(g,h)\}$.
	
	Finally, multiplying by $(-1)^{n_\G + n_\H - 1}$ yields that the requested polynomial~\eqref{subeqn:dom-set} can be generated by $B$.
	This finalizes to prove that $B$ is a Gröbner basis.
	
	The last step in the proof is to show that $B$ is a reduced Gröbner basis.	
	It is rather easy to see that there is no monomial in any of the polynomials in \eqref{subeqn:gb_first}--\eqref{subeqn:gb5}, which can be represented by the leading terms of the other polynomials in $B$.
	Moreover, it holds that the leading term of any polynomial from~\eqref{subeqn:gb-long} is the product of all variables in the polynomial that do not cancel out.
	The leading terms of these polynomials are of the same degree, square-free and pairwise distinct.
	Moreover, it holds that the variables in the leading terms of~\eqref{subeqn:gb-long} are not leading term of any polynomial from~\eqref{subeqn:gb_first} or~\eqref{subeqn:gb_second} in $B$.
	Therefore, we can not represent a leading term of a polynomial from~\eqref{subeqn:gb-long} by the leading terms of the other polynomials in~$B$.
	A monomial $m_1$ of a polynomial $p$ of type~\eqref{subeqn:gb-long} in~$B$, 
	which is not the leading term, has a smaller degree than the leading term 
	and is the product of pairwise distinct variables, which occur in the 
	polynomials of degree 2 in~$B$.
	For each leading term of the polynomials of type~\eqref{subeqn:gb-long} it holds that there is a variable which is a factor of the leading term but is no factor of $m_1$.
	Due to these facts, we are not able to represent~$m_1$ by the leading terms of $B \setminus \{p\}$.
	%
	%
	Together with the fact that all polynomials in $B$ have leading coefficient 
	1, we conclude that the Gröbner basis is reduced.
\end{proof}

With the help of the reduced Gröbner basis of~$I_\viz$ obtained in 
Theorem~\ref{thm:gb}, we know that if a 
polynomial is not representable in terms of the polynomials in this basis, then 
it can not be in~$I_\viz$.
We use this to get an SDP formulation to computationally find an 
SOS-certificate.
Before doing so, we use the Gröbner basis to determine the minimum degree 
of an SOS-certificate.

\subsection{Minimum Degree of a Sum-Of-Squares Certificate}
The knowledge of the reduced Gröbner basis of $I_\viz$ allows us to state a 
lower bound on the degree~$\ell$ of an SOS-certificate for Vizing's conjecture 
in the case of $k_\G = k_\H = 1$.
\begin{theorem}
	\label{thm:min-degree-cert}
	Let $k_\G = k_\H = 1$ and $n_\G$, $n_\H > 1$, then there is no 
	$\ell$-SOS-certificate of $f_\viz$ for any integer $\ell$ less than $(n_\G 
	+ n_\H - 1)/2$.
\end{theorem}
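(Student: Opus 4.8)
The plan is a proof by contradiction. Suppose that for some integer $\ell < (n_\G + n_\H - 1)/2$ there is an $\ell$-SOS-certificate $s_1,\dots,s_t$ of $f_\viz$, i.e.\ $f_\viz \equiv \sum_{i=1}^t s_i^2 \mod I_\viz$ with $\deg s_i \le \ell$. Since $2\ell$ and $n_\G + n_\H - 1$ are integers, the hypothesis yields $2\ell \le n_\G + n_\H - 2$, so $\deg\bigl(\sum_{i=1}^t s_i^2\bigr) \le n_\G + n_\H - 2 < n_\G + n_\H - 1$.

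The decisive observation is how such a low-degree polynomial reduces modulo the reduced Gröbner basis $B$ from Theorem~\ref{thm:gb}. As recorded in the proof of that theorem, the leading term of every polynomial of type \eqref{subeqn:gb-long} is the product of the pairwise distinct leading variables of its $\lvert T_{gh}\rvert = n_\G + n_\H - 1$ factors, hence has degree $n_\G + n_\H - 1$, whereas the leading terms of the polynomials \eqref{subeqn:gb_first}--\eqref{subeqn:gb5} all have degree at most $2$. Because the term order is a total degree (graded) order, no reduction step modulo $B$ increases the total degree. Therefore, when computing the normal form modulo $B$ of any polynomial of degree strictly less than $n_\G + n_\H - 1$, no leading term of a polynomial \eqref{subeqn:gb-long} ever divides an occurring monomial, so only the polynomials \eqref{subeqn:gb_first}--\eqref{subeqn:gb5} are used. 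Writing $I_0 \subseteq I_\viz$ for the ideal they generate, this says the reduction produces an explicit representative modulo $I_0$.

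Next note that $f_\viz$ is itself in normal form modulo $B$: it is linear in the $x$-variables and free of all edge variables, so none of its monomials is divisible by a leading term of $B$. Since $\sum_{i=1}^t s_i^2 \equiv f_\viz \mod I_\viz$ and $B$ is a Gröbner basis, the normal form of $\sum_{i=1}^t s_i^2$ modulo $B$ is also $f_\viz$; and because $\deg\bigl(\sum_{i=1}^t s_i^2\bigr) < n_\G + n_\H - 1$, the previous paragraph gives $\sum_{i=1}^t s_i^2 - f_\viz \in I_0$. Now consider the point $z_0 \in \Var(I_0)$ with $x_{gh} = 0$ for all $(g,h) \in V(\Bgr)$ and edge variables chosen to satisfy \eqref{subeqn:gb_first}--\eqref{subeqn:gb5} (for instance $e_{g_1g} = e_{h_1h} = 1$ for all $g \neq g_1$, $h \neq h_1$, and all remaining edge variables equal to $0$). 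Evaluating $\sum_{i=1}^t s_i^2 - f_\viz \in I_0$ at $z_0$ gives $0$, while direct substitution gives $\sum_{i=1}^t s_i(z_0)^2 - f_\viz(z_0) = \sum_{i=1}^t s_i(z_0)^2 + k_\G k_\H \ge 1$, a contradiction. Hence no such certificate exists.

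I expect the main obstacle to be the bookkeeping in the second paragraph: one must verify that throughout the normal-form computation the current polynomial stays in degree $< n_\G + n_\H - 1$, so that the large generators \eqref{subeqn:gb-long} are genuinely irrelevant; this rests on the gradedness of the term order together with the exact degree $n_\G + n_\H - 1$ of the leading terms of \eqref{subeqn:gb-long} computed in the proof of Theorem~\ref{thm:gb}. The remaining steps — the integrality estimate $2\ell \le n_\G + n_\H - 2$ and the evaluation at $z_0$ — are routine. In fact the argument proves slightly more, namely that $f_\viz$ is not SOS modulo $I_0$ at all, since it is not even nonnegative on $\Var(I_0)$.
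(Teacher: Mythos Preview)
Your proof is correct and follows essentially the same strategy as the paper: both exploit that in the reduced Gröbner basis of Theorem~\ref{thm:gb} every element has degree $1$, $2$, or $n_\G+n_\H-1$, so a polynomial of degree $<n_\G+n_\H-1$ congruent to $f_\viz$ modulo $I_\viz$ would force $\sum_i s_i^2-f_\viz$ into the sub-ideal $I_0$ generated by the low-degree elements, which is impossible because $f_\viz$ takes the value $-1$ at a point of $\Var(I_0)$.

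The execution differs slightly. The paper first substitutes $e_{g_1g}=e_{h_1h}=1$ into the $s_i$ (using Lemma~\ref{lemma:1-e-in-ideal}) to obtain polynomials $p_i$ free of these edge variables, and then argues syntactically that the constant term of $\sum_i p_i^2-f_\viz$ is $\ge 1$ while no combination of the degree-$1$ and degree-$2$ Gröbner basis elements can produce such a polynomial without reintroducing an $e_{g_1g}$ or $e_{h_1h}$. Your route is more direct: you use gradedness of the term order to see that the normal-form computation of $\sum_i s_i^2$ never touches the generators~\eqref{subeqn:gb-long}, hence $\sum_i s_i^2-f_\viz\in I_0$, and then you evaluate at an explicit point $z_0\in\Var(I_0)$ with all $x_{gh}=0$. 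This avoids the substitution step and replaces the informal ``we end up getting an edge variable'' argument by a clean evaluation; conversely, the paper's substitution makes the subsequent constant-term bookkeeping entirely elementary without invoking properties of normal-form reduction. Your closing remark that the argument actually shows $f_\viz$ is not SOS modulo $I_0$ (since it is not even nonnegative on $\Var(I_0)$) is a nice sharpening.
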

\begin{proof}
	For any set of polynomials $s_1, \dots, s_t \in P_\Bgr$ that forms an 
	$\ell$-SOS-certificate of $f_\viz$, it needs to hold that
	\begin{equation*}
	\sum_{i=1}^{t} s_i^2 - f_\viz \equiv 0 \quad \mod I_\viz.
	\end{equation*}
	Additionally, the degrees of the polynomials~$s_1,\dots s_t$ have to be at most~$\ell$.
	
	For all $1 \leq i \leq t$ let $p_i$  be the polynomial that results from 
	$s_i$ by evaluating $e_{g_1g} = 1$ and~$e_{h_1h} = 1$ for all $g \in 
	V(\G)\setminus \{g_1\}$ and for all $h \in V(\H) \setminus \{h_1\}$.
	Lemma~\ref{lemma:1-e-in-ideal} yields that
	\begin{equation*}
	\sum_{i=1}^{t} p_i^2 - f_\viz \equiv \sum_{i=1}^{t} s_i^2 - f_\viz \equiv 0 \quad \mod I_\viz.
	\end{equation*}
	
	To show that something is congruent to $0$ modulo~$I_\viz$ is the same as proving that it is contained in~$I_\viz$.
	This implies that
	\begin{equation}
	\label{eq:proof-mindegree-poly}
	\sum_{i=1}^{t} p_i^2 - f_\viz = \sum_{i=1}^{t} p_i^2 - \sum_{(g,h) \in V(\Bgr)} x_{gh} + 1
	\end{equation}
	has to be generated by the elements in the Gröbner basis of~$I_\viz$ stated 
	in Theorem~\ref{thm:gb}.
	
	Assume that this can be done by using the elements of degree $1$ and $2$ only.
	We know that the constant term of the polynomial 
	in~\eqref{eq:proof-mindegree-poly} is greater or equal to 1.
	Furthermore, we are not able to represent a polynomial with constant term other than zero by using only the Gröbner basis elements of degree~2.
	This means that we have to use at least one Gröbner basis element of degree 
	1.
	But if we do so, we end up getting an edge variable $e_{g_1g}$ with $g \in 
	V(\G)\setminus \{g_1\}$ or~$e_{h_1h}$ with $h \in V(\H) \setminus\{h_1\}$ 
	in the resulting polynomial.
	Clearly, there is no such edge variable in~\eqref{eq:proof-mindegree-poly}.
	Therefore, it holds that it is not possible to represent the polynomial in~\eqref{eq:proof-mindegree-poly} by using the elements of degree $1$ and $2$ only.
	
	Intuitively, this makes sense, as the polynomials in the Gröbner basis of degree 1 can be used to reduce the variable to $1$, and the polynomials of degree 2 can be used to reduce higher powers of the variable to the variable itself, and this is not enough to reduce~\eqref{eq:proof-mindegree-poly} to zero.
	
	However, all other polynomials in the Gröbner basis have degree~$n_\G + n_H - 1$.
	Hence, the degree of the polynomial in~\eqref{eq:proof-mindegree-poly} is at least~$n_\G + n_H - 1$.
	Consequently, there has to be at least one polynomial $p_i$ such that the degree of $p_i^2$ is greater or equal to~$n_\G + n_\H - 1$.
	As the degree of~$p_i$ is less or equal to the degree of $s_i$, we get that two times the degree of $s_i$ is also greater or equal to $n_\G + n_\H - 1$.
	This implies that there is no $\ell$-SOS-certificate of~$f_\viz$ for~$\ell 
	< (n_\G + n_\H - 1)/2$.
\end{proof}

As a result of Theorem~\ref{thm:min-degree-cert}, any $\ell$-SOS-certificate 
for  $k_\G = k_\H = 1$ has to be at least of degree 
$\ell \geq (n_\G + n_\H - 1)/2$. This is the first result stating the minimum 
degree of an $\ell$-SOS-certificate for any values of $n_\G$, $n_\H$, $k_\G$ 
and $k_\H$.

\section{New Certificates for Two Subclasses of 
	\texorpdfstring{$k_\G = k_\H = 1$}{kG=kH=1}}
\label{chapter:examples}

In this section, we present SOS-certificates for Vizing's conjecture
obtained with the method of~\cite{vizing-long2020}, i.e., 
by following the steps in~\cite[Section 4]{vizing-long2020}, 
and recalled in Section~\ref{chapter:thbackground}.
To set up the SDP, to solve the SDP and to computationally verify our obtained 
certificates, we made use of the code provided in~\cite{vizing-long2020}.
In particular, we ran the code in SageMath~\cite{sagemath} and in MATLAB using 
MOSEK~\cite{mosek}.
We refrain from detailing the steps and focus on presenting the 
SOS-certificates and proving their correctness. For details on how they were 
obtained we refer to the master thesis of 
Siebenhofer~\cite{melanieMasterThesisVizing}.

\subsection{Certificate for 
	\texorpdfstring{$n_\G = 3$}{nG=3},
	\texorpdfstring{$n_\H = 2$}{nH=2} and 
	\texorpdfstring{$k_\G = k_\H = 1$}{kG=kH=1}}

We start by presenting a  $2$-SOS-certificate
for the case $n_\G = 3$, $n_\H = 2$ and $k_\G = k_\H = 1$\revision{.}

\begin{theorem}
	\label{thm:cert32}
	For $n_\G = 3$, $n_\H = 2$ and $k_\G = k_\H = 1$, Vizing's conjecture 
	holds, as for all choices of $(g,h) \in V(\Bgr) $ the polynomials
	\begin{align*}
	s_{g^*h^*} &= x_{g^*h^*}  \text{\hspace*{2.5cm} for all $(g^*,h^*) \in 
	V(\Bgr)$  with $g^*\neq g$ and $ h^*\neq h$,}\\
	s_1 &= -\alpha + \alpha \sum_{(g',h') \in T_{gh}} x_{g'h'} + \beta 
	\sum_{(g',h') \in T_{gh}} x_{g'h'}  \sum_{(g'',h'') \in 
	T_{gh}\setminus{\{(g',h')\}} } x_{g''h''} \quad  \text{ and}\\
	s_2 &= \delta \sum_{(g',h') \in T_{gh}} x_{g'h'}  \sum_{(g'',h'') \in T_{gh}\setminus{\{(g',h')\}} } x_{g''h''},
	\end{align*}
	form a 2-SOS-certificate of $f_\viz$ for all
	$(\alpha, \beta, \delta)$ in
	\[ \bigg\{ \Big(-\sqrt{3}, \frac{4}{9}\sqrt{3}, -\frac{1}{9} \sqrt{6} \Big),
	\Big(-\sqrt{3}, \frac{4}{9}\sqrt{3}, \frac{1}{9} \sqrt{6} \Big),
	\Big(\sqrt{3}, -\frac{4}{9}\sqrt{3}, -\frac{1}{9} \sqrt{6}\Big),
	\Big(\sqrt{3}, -\frac{4}{9}\sqrt{3}, \frac{1}{9} \sqrt{6}\Big)
	\bigg\}. \]
\end{theorem}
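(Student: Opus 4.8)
The statement is a concrete verification, so the plan is to reduce $\sum_i s_i^2 \equiv f_\viz \ \mod I_\viz$ to a few scalar equations in $\alpha,\beta,\delta$ and then check that the four listed triples satisfy them; the assertion that Vizing's conjecture holds for this graph class then follows because an $\ell$-SOS representation modulo $I_\viz$ certifies non-negativity of $f_\viz$ on $\Var(I_\viz)$ (the machinery recalled in Section~\ref{chapter:thbackground} around Theorem~\ref{cor:conjecture-sos}), and is anyway immediate here. Fix $(g,h) \in V(\Bgr)$. The vertex set $V(\Bgr)$ is the disjoint union of $T_{gh}$, which has $n_\G + n_\H - 1 = 4$ elements, and the set $R = \{(g^*,h^*) \in V(\Bgr) : g^* \neq g \text{ and } h^* \neq h\}$, which has $(n_\G - 1)(n_\H - 1) = 2$ elements. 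Using the generators~\eqref{subeqn:vertices}, i.e.\ $x^2 \equiv x \ \mod I_\viz$ for every vertex variable, we get $\sum_{(g^*,h^*)\in R} s_{g^*h^*}^2 = \sum_{(g^*,h^*)\in R} x_{g^*h^*}^2 \equiv \sum_{(g^*,h^*)\in R} x_{g^*h^*} \ \mod I_\viz$, while $f_\viz = \rho^1_{gh} + \sum_{(g^*,h^*)\in R} x_{g^*h^*} - 1$ with $\rho^1_{gh}$ as in Definition~\ref{def:rho}. Hence it suffices to show $s_1^2 + s_2^2 \equiv \rho^1_{gh} - 1 \ \mod I_\viz$.

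Next I would rewrite $s_1$ and $s_2$ through the polynomials $\rho^j_{gh}$ of Definition~\ref{def:rho}: collecting terms, $s_1 = \alpha\bigl(\rho^1_{gh} - 1\bigr) + \beta\,\rho^2_{gh}$ and $s_2 = \delta\,\rho^2_{gh}$, so that $s_1^2 + s_2^2 = \alpha^2\bigl(\rho^1_{gh} - 1\bigr)^2 + 2\alpha\beta\bigl(\rho^1_{gh} - 1\bigr)\rho^2_{gh} + (\beta^2 + \delta^2)\bigl(\rho^2_{gh}\bigr)^2$. Corollary~\ref{cor:reduce-prod-polynomials} with $d = n_\G + n_\H - 1 = 4$ now reduces the three products $(\rho^1_{gh})^2$, $\rho^1_{gh}\rho^2_{gh}$ and $(\rho^2_{gh})^2$ modulo $I_\viz$ to $\Z$-linear combinations of $\rho^1_{gh}, \rho^2_{gh}, \rho^3_{gh}, \rho^4_{gh}$; together with $(\rho^1_{gh}-1)^2 = (\rho^1_{gh})^2 - 2\rho^1_{gh} + 1$ this writes $s_1^2 + s_2^2$ modulo $I_\viz$ as an explicit $\K$-linear combination of $1, \rho^1_{gh}, \rho^2_{gh}, \rho^3_{gh}, \rho^4_{gh}$ whose coefficients are polynomials in $\alpha,\beta,\delta$.

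To finish I would remove the degree-$4$ term using Lemmas~\ref{lemma:reduction-polynomial-in-ideal} and~\ref{lemma:reduction-polynomial-as-sum}, which give $\sum_{i=0}^{4}(-1)^i\rho^i_{gh} \in I_\viz$, i.e.\ $\rho^4_{gh} \equiv \rho^1_{gh} - \rho^2_{gh} + \rho^3_{gh} - 1 \ \mod I_\viz$. Substituting yields $s_1^2 + s_2^2 \equiv c_0 + c_1\rho^1_{gh} + c_2\rho^2_{gh} + c_3\rho^3_{gh} \ \mod I_\viz$ with $c_0,\dots,c_3$ explicit polynomials in $\alpha,\beta,\delta$. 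Since $1, \rho^1_{gh}, \rho^2_{gh}, \rho^3_{gh}$ are homogeneous of pairwise distinct degrees, hence $\K$-linearly independent, the congruence $s_1^2+s_2^2 \equiv \rho^1_{gh}-1$ is equivalent to $c_0 = -1$, $c_1 = 1$, $c_2 = 0$, $c_3 = 0$. One checks $c_0 = -1$ and $c_1 = 1$ coincide; $c_3 = 0$ reads $\alpha\beta = -2(\beta^2+\delta^2)$, and then $c_2 = 0$ forces $\alpha^2 = \tfrac92(\beta^2+\delta^2)$, whence $\beta^2+\delta^2 = \tfrac23$ and $\alpha^2 = 3$. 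It then remains to verify — a routine substitution — that each of the four triples satisfies $\alpha^2 = 3$, $\beta^2+\delta^2 = \tfrac23$ and $\alpha\beta = -\tfrac43$; this also explains why the list carries two free signs, since only $\beta^2+\delta^2$ and $\alpha\beta$ enter the conditions.

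There is no conceptual obstacle; the work is bookkeeping, and the only delicate points are getting the combinatorial reductions exactly right — in particular expanding the sums defining $s_1, s_2$ so that their quadratic part reduces precisely to $\rho^2_{gh}$, and tracking the binomial coefficients produced by Corollary~\ref{cor:reduce-prod-polynomials} for $d = 4$. Finally, the whole argument is uniform in $(g,h)$, since $\lvert T_{gh}\rvert = n_\G + n_\H - 1$ and the lemmas used hold for every $(g,h) \in V(\Bgr)$; this is why the same $s_1, s_2$ work for all choices of $(g,h)$, and since $s_1, s_2$ have degree $2$ the certificate attains the minimum degree $(n_\G+n_\H-1)/2 = 2$ of Theorem~\ref{thm:min-degree-cert}.
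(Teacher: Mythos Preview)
Your proof is correct and follows essentially the same route as the paper's: rewrite $s_1,s_2$ via $\rho^i_{gh}$, reduce the products with Corollary~\ref{cor:reduce-prod-polynomials}, and invoke Lemmas~\ref{lemma:reduction-polynomial-in-ideal}--\ref{lemma:reduction-polynomial-as-sum}; the only cosmetic difference is that the paper keeps $\rho^4_{gh}$ and factors the residual as $(\alpha^2+1)\sum_{k=0}^4(-1)^k\rho^k_{gh}\in I_\viz$, whereas you substitute $\rho^4_{gh}\equiv \rho^3_{gh}-\rho^2_{gh}+\rho^1_{gh}-1$ first and then match coefficients. One small caution: your ``equivalent to'' step appeals to linear independence of $1,\rho^1_{gh},\rho^2_{gh},\rho^3_{gh}$ in $P_\Bgr$, but the congruence lives modulo $I_\viz$; fortunately only the trivial sufficiency direction is needed for the theorem, so this does not affect validity.
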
	
\begin{remark}
	\label{rmk:cert32}
	Theorem~\ref{thm:cert32} is true whenever
	$\alpha$, $\beta$, $\delta$ are solutions to the system of equations
	\begin{align*}
	\alpha^2 + 1 &= 2\alpha^2 + \beta^2 + 2\alpha\beta + \delta^2, \\
	-(\alpha^2 + 1) &= 6\beta^2 + 6\alpha\beta + 6\delta^2 \quad \text{and} \\
	\alpha^2 + 1 &=6\beta^2 + 6\delta^2.
	\end{align*}
	It can be checked that the pairs~$(\alpha, \beta, \delta)$ stated in Theorem~\ref{thm:cert32} are all solutions to this system of equations.
\end{remark}	
\begin{proof}[Proof of Theorem~\ref{thm:cert32}]
	We start by fixing a vertex~$(g,h) \in V(\Bgr)$.
	Next, we write the polynomials~$s_1$ and~$s_2$ using the polynomials $\rho_{gh}^1$ and $\rho_{gh}^2$ from Definition~\ref{def:rho}, so
	\begin{align*}
	s_1 &= -\alpha + \alpha \rho_{gh}^1 + \beta \rho_{gh}^2 \quad \text{ and}\\
	s_2 &= \delta \rho_{gh}^2.
	\end{align*} 
	For brevity, we denote by $\rho^k$ the polynomial $\rho^k_{gh}$ for $1 \leq k \leq 4$.
	By Corollary~\ref{cor:reduce-prod-polynomials} we get
	\begin{align*}
	\rho^1\rho^1 & \equiv 
	\rho^1 + 2\rho^2 \quad \mod I_\viz ,\\
	\rho^2\rho^2 & \equiv 
	\rho^2 + 6\rho^3 + 6 \rho^4 \quad \mod I_\viz  \quad \text{and}\\
	\rho^1\rho^2& \equiv 
	2\rho^2 + 3\rho^3 \quad \mod I_\viz.
	\end{align*}
	These congruences imply that
	\begin{align*}
	s_1^2 & = (-\alpha + \alpha \rho^1 + \beta \rho^2)^2\\
	&= \alpha^2 + \alpha^2 \rho^1 \rho^1 + \beta^2 \rho^2 \rho^2 - 2\alpha^2 \rho^1 - 2 \alpha \beta \rho^2 + 2 \alpha \beta \rho^1 \rho^2\\
	&\equiv \alpha^2  + \alpha^2(\rho^1 + 2\rho^2) + \beta^2(\rho^2 + 6\rho^3 + 6\rho^4) - 2\alpha^2\rho^1  - 2\alpha\beta \rho^2 + 2\alpha\beta(2\rho^2 + 3\rho^3) = \\
	& = \alpha^2 - \alpha^2 \rho^1 + (2\alpha^2 + \beta^2 + 2\alpha\beta)\rho^2 + (6\beta^2 + 6\alpha\beta)\rho^3 + 6\beta^2 \rho^4 \quad \mod I_\viz
	\end{align*}
	and
	\begin{align*}
	s_2^2 & = (\delta \rho^2)^2 \equiv  \delta^2\rho^2 + 6\delta^2 \rho^3 + 6\delta^2 \rho^4 \quad \mod I_\viz 
	\end{align*}
	hold.
	
	The sum of squares of the polynomials in the certificate can be written as
	\begin{multline}
	\label{eq:sos-cert-32}
	\sum_{\substack{(g^*,h^*) \in V(\Bgr) \\ g^* \neq g, h^* \neq h}} 
	s_{g^*h^*}^2 + s_1^2 + s_2^2  = \sum_{\substack{(g^*,h^*) \in V(\Bgr) \\ 
	g^* \neq g, h^* \neq h}} x_{g^*h^*}^2 + \alpha^2  - \alpha^2 \rho^1  + 
	(2\alpha^2 + \beta^2 + 2\alpha\beta + \delta^2)\rho^2 \\
	+ (6\beta^2 + 6\alpha\beta + 6\delta^2)\rho^3 + (6\beta^2 + 6\delta^2)\rho^4.
	\end{multline}
	Using the fact that $x_{g^*h^*}^2 \equiv x_{g^*h^*}  \ \mod I_\viz $, we get
	\begin{align*}
	\sum_{\substack{(g^*,h^*) \in V(\Bgr) \\ g^* \neq g, h^* \neq h}} x_{g^*h^*}^2 + \rho^1 - 1 &\equiv \sum_{\substack{(g^*,h^*) \in V(\Bgr) \\ g^* \neq g, h^* \neq h}} x_{g^*h^*} + \rho^1 - 1 \\
	& = \sum_{(g^*,h^*) \in V(\Bgr) \setminus T_{gh}} x_{g^*h^*} + \sum_{(g',h') \in T_{gh}} x_{g'h'} - 1\\
	& = f_\viz \quad \mod I_\viz.
	\end{align*}
	Therefore the sum of squares~\eqref{eq:sos-cert-32} written as
	\begin{multline*}
	\sum_{\substack{(g^*,h^*) \in V(\Bgr) \\ g^* \neq g, h^* \neq h}} x_{g^*h^*}^2 + \rho^1 - 1 + (\alpha^2 + 1) - (\alpha^2 + 1) \rho^1\\ + (2\alpha^2 + \beta^2 + 2\alpha\beta + \delta^2)\rho^2 + (6\beta^2 + 6\alpha\beta + 6\delta^2)\rho^3 + (6\beta^2 + 6\delta^2)\rho^4
	\end{multline*}
	is congruent
	\begin{multline*}
	f_\viz + (\alpha^2 + 1) - (\alpha^2 + 1) \rho^1 + (2\alpha^2 + \beta^2 + 2\alpha\beta + \delta^2)\rho^2 + (6\beta^2 + 6\alpha\beta + 6\delta^2)\rho^3 + (6\beta^2 + 6\delta^2)\rho^4
	\end{multline*}
	modulo $I_\viz$.
	Since~$\alpha$, $\beta$ and $\delta$ satisfy
	\begin{align*}
	\alpha^2 + 1 &= 2\alpha^2 + \beta^2 + 2\alpha\beta + \delta^2, \\
	-(\alpha^2 + 1) &= 6\beta^2 + 6\alpha\beta + 6\delta^2 \quad \text{and} \\
	\alpha^2 + 1 &=6\beta^2 + 6\delta^2,
	\end{align*}
	the sum of squares of the polynomials is congruent
	\begin{align*}
	f_\viz + (\alpha^2 + 1)(1 - \rho^1 + \rho^2 - \rho^3 + \rho^4)
	\end{align*}
	modulo~$I_\viz$.
	Lemma~\ref{lemma:reduction-polynomial-as-sum} together with Lemma~\ref{lemma:reduction-polynomial-in-ideal} yields that
	\begin{align*}
	f_\viz + (\alpha^2 + 1)(1 - \rho^1 + \rho^2 - \rho^3 + \rho^4) &=  f_\viz + (\alpha^2 + 1) \prod_{(g',h')\in T_{gh}} (1 - x_{g'h'}) \\
	& \equiv f_\viz \quad \mod I_\viz,
	\end{align*}
	which completes the proof.
\end{proof}

To sum up, we found for each of the 6 vertices in $\Bgr$ 4 different 
certificates of degree~$2$ for Vizing's conjecture on the graph class $\Bgr$ 
with 
$n_\G = 3$, $k_\G = 1$, $n_\H = 2$ and $k_\H = 1$. In total, \revision{these 
give} $24$ 
different $2$-SOS-certificates.

\subsection{Certificate for 
	\texorpdfstring{$n_\G = n_\H = 2$}{nG=nH=2} and 
	\texorpdfstring{$k_\G = k_\H = 1$}{kG=kH=1}}

Next, we consider the graph class with $n_\G = n_\H = 
2$ and 
{$k_\G = k_\H = 1$. Here we find the following certificate.

\begin{theorem}
	\label{thm:cert2121}
	For $n_\G = n_\H = 2$ and $k_\G = k_\H = 1$ Vizing's conjecture holds, 
	since for any $(g,h) \in V(\Bgr)$ the two polynomials
	\begin{align*}
	s_{g^*h^*} &= x_{g^*h^*} && \text{and}\\
	s_1 &= -\alpha + \alpha \sum_{(g',h') \in T_{gh}} x_{g'h'} + \beta \sum_{(g',h') \in T_{gh}} x_{g'h'}  \sum_{(g'',h'') \in T_{gh}\setminus{\{(g',h')\}} } x_{g''h''}
	\end{align*}
	with  $(g^*,h^*)$ being the only vertex not in the set $T_{gh}$,
	form a 2-SOS-certificate of $f_\viz$ for all pairs
	\[(\alpha, \beta) \in \Big\{ \big(\sqrt{2} + 3, - \sqrt{2} - 2\big), \big(-\sqrt{2} + 3, \sqrt{2} - 2\big), \big(  \sqrt{2} - 3, - \sqrt{2} + 2\big), \big(-\sqrt{2} - 3, \sqrt{2} + 2\big) \Big\}.\]
\end{theorem}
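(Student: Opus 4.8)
The plan is to follow exactly the strategy of the proof of Theorem~\ref{thm:cert32}, now specialized to $\lvert T_{gh}\rvert = n_\G + n_\H - 1 = 3$. First I would fix a vertex $(g,h) \in V(\Bgr)$ and rewrite the certificate in terms of the polynomials $\rho^k_{gh}$ from Definition~\ref{def:rho}. Since there are $n_\G n_\H = 4$ vertices in $\Bgr$ and $\lvert T_{gh}\rvert = 3$, exactly one vertex $(g^*,h^*)$ lies outside $T_{gh}$, and the certificate is $s_{g^*h^*} = x_{g^*h^*}$ together with $s_1 = -\alpha + \alpha\rho^1_{gh} + \beta\rho^2_{gh}$. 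Writing $\rho^k$ for $\rho^k_{gh}$, note that $\rho^4_{gh} = 0$ here (unlike in the $n_\G = 3$, $n_\H = 2$ case), so only the powers $\rho^1, \rho^2, \rho^3$ will appear.

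Next I would invoke Corollary~\ref{cor:reduce-prod-polynomials} with $d = 3$ to obtain the three reductions $\rho^1\rho^1 \equiv \rho^1 + 2\rho^2$, $\rho^1\rho^2 \equiv 2\rho^2 + 3\rho^3$ and $\rho^2\rho^2 \equiv \rho^2 + 6\rho^3$ modulo $I_\viz$. Expanding $s_1^2 = (-\alpha + \alpha\rho^1 + \beta\rho^2)^2$ and substituting these congruences yields
\begin{equation*}
s_1^2 \equiv \alpha^2 - \alpha^2\rho^1 + (2\alpha^2 + \beta^2 + 2\alpha\beta)\rho^2 + (6\beta^2 + 6\alpha\beta)\rho^3 \quad \mod I_\viz.
\end{equation*}
Then, using $x_{g^*h^*}^2 \equiv x_{g^*h^*} \mod I_\viz$ together with the identity $x_{g^*h^*} + \rho^1 - 1 = \big(\sum_{(g',h')\in V(\Bgr)} x_{g'h'}\big) - k_\G k_\H = f_\viz$, I would combine terms to conclude that $s_{g^*h^*}^2 + s_1^2$ is congruent modulo $I_\viz$ to
\begin{equation*}
f_\viz + (\alpha^2+1) - (\alpha^2+1)\rho^1 + (2\alpha^2 + \beta^2 + 2\alpha\beta)\rho^2 + (6\beta^2 + 6\alpha\beta)\rho^3.
\end{equation*}

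The last step is to observe that the correction term here equals $(\alpha^2+1)\big(1 - \rho^1 + \rho^2 - \rho^3\big)$ precisely when $(\alpha,\beta)$ solves the system
\begin{equation*}
2\alpha^2 + \beta^2 + 2\alpha\beta = \alpha^2 + 1, \qquad 6\beta^2 + 6\alpha\beta = -(\alpha^2+1),
\end{equation*}
the coefficients of the constant term and of $\rho^1$ matching automatically. By Lemma~\ref{lemma:reduction-polynomial-as-sum} the polynomial $1 - \rho^1 + \rho^2 - \rho^3$ equals $\prod_{(g',h')\in T_{gh}}(1-x_{g'h'})$, which by Lemma~\ref{lemma:reduction-polynomial-in-ideal} lies in $I_\viz$; hence the correction term is congruent to $0$ and $s_{g^*h^*}^2 + s_1^2 \equiv f_\viz$, proving the claim for every $(\alpha,\beta)$ in the solution set of the above system. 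It then remains to check that the four listed pairs are such solutions; rewriting the first equation as $(\alpha+\beta)^2 = 1$ and the second as $6\beta(\alpha+\beta) = -(\alpha^2+1)$ makes this a short verification (for example $\alpha + \beta = 1$ for the pairs with $\alpha = \pm\sqrt{2} + 3$). I do not anticipate a genuine obstacle: the entire argument is a specialization of the proof of Theorem~\ref{thm:cert32}, and the only delicate points are the bookkeeping of the $\rho^k$-coefficients after substitution and the final algebraic check, which — as for Remark~\ref{rmk:cert32} — could be isolated into a remark characterizing all valid $(\alpha,\beta)$.
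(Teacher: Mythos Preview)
Your proposal is correct and follows essentially the same approach as the paper: fix $(g,h)$, rewrite $s_1$ in terms of $\rho^1,\rho^2$, reduce the products via Corollary~\ref{cor:reduce-prod-polynomials}, combine with $x_{g^*h^*}^2 \equiv x_{g^*h^*}$ to recognize $f_\viz$, and then use Lemmas~\ref{lemma:reduction-polynomial-as-sum} and~\ref{lemma:reduction-polynomial-in-ideal} once $(\alpha,\beta)$ satisfies the two-equation system, which the paper likewise records separately in Remark~\ref{rmk:cert2121}. Your additional observations that $\rho^4=0$ here and that the first equation rewrites as $(\alpha+\beta)^2=1$ are helpful but do not change the argument.
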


\begin{remark}
	\label{rmk:cert2121}
	In particular, Theorem~\ref{thm:cert2121} is true whenever $\alpha$ and $\beta \in \R$	are solutions to the system of equations
	\begin{align*}
	\alpha^2 + 1 &= 2\alpha^2 + \beta^2 + 2\alpha\beta \text{\quad and}\\
	-(\alpha^2 + 1) &= 6 \beta^2 + 6\alpha\beta.
	\end{align*}
	The ones stated in the theorem are all solutions to this system of equations.
\end{remark}

\begin{proof}[Proof of Theorem~\ref{thm:cert2121}]
	This proof is analogous to that of Theorem~\ref{thm:cert32}.
	First, we fix a vertex~$(g,h) \in V(\Bgr)$.
	Next, we rewrite $s_1$ as $- \alpha + \alpha \rho_{gh}^1 + \beta 
	\rho_{gh}^2$.
	For the sake of brevity, we denote by $\rho^k$ the polynomial $\rho_{gh}^k$ for $1 \leq k \leq 3$.
	By Corollary~\ref{cor:reduce-prod-polynomials} we get
	\begin{align*}
	\rho^1 \rho^1 &\equiv 
	\rho^1 + 2 \rho^2 \quad \mod I_\viz,\\
	\rho^2 \rho^2 &\equiv 
	\rho^2 + 6 \rho^3 \quad \mod I_\viz \quad \text{ and} \\
	\rho^1 \rho^2 &\equiv 
	2 \rho^2 + 3 \rho^3 \quad \mod I_\viz.
	\end{align*}
	Hence, we can write $s_1^2$ as 
	\begin{align*}
	s_1^2 &= (-\alpha + \alpha \rho^1 + \beta \rho^2)^2 \\
	&= \alpha^2 + \alpha^2 \rho^1 \rho^1 + \beta^2 \rho^2 \rho^2  - 2 \alpha^2 \rho^1 - 2 \alpha\beta \rho^2 + 2 \alpha \beta \rho^1 \rho^2\\
	&\equiv \alpha^2  + \alpha^2(\rho^1 + 2 \rho^2) + \beta^2(\rho^2 + 6 \rho^3) - 2\alpha^2 \rho^1 - 2\alpha\beta \rho^2  + 2\alpha\beta(2\rho^2 + 3\rho^3) = \\
	& = \alpha^2 + (\alpha^2 - 2\alpha^2)\rho^1 + (2\alpha^2 + \beta^2 - 2\alpha\beta + 4\alpha\beta)\rho^2 + (6\beta^2 + 6\alpha\beta)\rho^3 = \\
	& = \alpha^2  - \alpha^2 \rho^1 + (2\alpha^2 + \beta^2 + 2\alpha\beta)\rho^2 + (6\beta^2 + 6\alpha\beta)\rho^3 \quad \mod I_\viz.
	\end{align*}
	Using the fact that $x_{g^*h^*}^2 \equiv x_{g^*h^*}  \ \mod I_\viz$ holds, we get that 
	\begin{align*}
	x_{g^*h^*}^2 + \rho^1 - 1 &\equiv x_{g^*h^*} + \rho^1 - 1 \\
	& = x_{g^*h^*} + \sum_{(g',h') \in T_{gh}} x_{g'h'} - 1 \\
	& = f_\viz \quad \mod I_\viz.
	\end{align*}
	Therefore, for the sum of the polynomials squared it holds that
	\begin{align*}
	x_{g^*h^*}^2 & + s_1^2 \equiv x_{g^*h^*}^2 + \alpha^2  - \alpha^2 \rho^1 + 
	(2\alpha^2 + \beta^2 + 2\alpha\beta)\rho^2 + (6\beta^2 + 
	6\alpha\beta)\rho^3 \\
	& = x_{g^*h^*}^2 + \rho^1 - 1  + (\alpha^2 + 1)  - (\alpha^2 + 1) \rho^1 + (2\alpha^2 + \beta^2 + 2\alpha\beta)\rho^2 + (6\beta^2 + 6\alpha\beta)\rho^3 \\
	& \equiv f_\viz + (\alpha^2 + 1)  - (\alpha^2 + 1) \rho^1 + (2\alpha^2 + \beta^2 + 2\alpha\beta)\rho^2 + (6\beta^2 + 6\alpha\beta)\rho^3 \quad \mod I_\viz.
	\end{align*}
	Since $\alpha$ and $\beta$ satisfy
	\begin{align*}
	\alpha^2 + 1 &= 2\alpha^2 + \beta^2 + 2\alpha\beta \text{\quad and}\\
	-(\alpha^2 + 1) &= 6 \beta^2 + 6\alpha\beta,
	\end{align*}
	we can further conclude with Lemma~\ref{lemma:reduction-polynomial-as-sum} and Lemma~\ref{lemma:reduction-polynomial-in-ideal} that
	\begin{align*}
	x_{g^*h^*}^2 + s_1^2 &\equiv f_\viz + (\alpha^2 + 1)  - (\alpha^2 + 1) \rho^1 + (2\alpha^2 + \beta^2 + 2\alpha\beta)\rho^2 + (6\beta^2 + 6\alpha\beta)\rho^3  \\
	& = f_\viz + (\alpha^2 + 1)(1 - \rho^1 + \rho^2 - \rho^3) \\
	& \equiv f_\viz \quad \mod I_\viz
	\end{align*}
	holds, which closes the proof.
\end{proof}

One may observe the strong parallelism between the two graph classes 
considered. 
In the next section we derive a generalized method to find certificates of this 
special form.

\section{General Approach to Find Certificates 
	for~\texorpdfstring{$k_\G = k_\H = 1$}{kG=kH=1}}
\label{chapter:generalapproach}

In this section, we first give a general formulation of the previous two 
SOS-certificates, that could potentially be an  SOS-certificate for any graph 
classes~$\G$ and~$\H$ with~$k_\G = k_\H = 1$.
To really obtain an SOS-certificate one has to determine the coefficients of 
the polynomials in this specific SOS-certificate by finding a solution of a 
system of equations.
We give an algorithm to find such a solution in the second part of this section.

\subsection{General Certificate 
	for \texorpdfstring{$k_\G = k_\H = 1$}{kG=kH=1}}
The SOS-certificates of the last section have a few things in common.
First, a vertex~$(g,h) \in V(\Bgr)$ is selected, which determines the set~$T_{gh}$ as defined at the beginning of Section~\ref{sec:auxiliary-results}.
Then, the polynomials of degree greater than~1 in the certificate contain only vertex variables corresponding to the vertices in~$T_{gh}$.
In particular, we use the polynomials~$\rho_{gh}^i$ from Definition~\ref{def:rho} to represent these polynomials in the certificate.
Based on computational results and on our knowledge of the Gröbner basis, we 
propose the following specific form of a possible SOS-certificate.
Additionally, we give a condition on the correctness of the certificate in the next theorem.
\begin{theorem}
	\label{thm:general-cert}
	Let~$k_\G = k_\H = 1$ and let $d = n_\G + n_\H - 1$.
	If $c_{w,i} \in \R$ for~$1 \leq w \leq \lceil d/2 \rceil$ and~$0 \leq i \leq 
	\lceil 
	d/2 \rceil$ is 
	a solution to the system of equations
\begin{subequations}
	\label{eq:linSysEqCertComplete}
	\begin{align}
	c_{w,0} &= - c_{w,1} && \forall 1 \leq w \leq \lceil d/2 \rceil  \\
\label{eq:linSysEqCert}	
(-1)^k \bigg(\sum_{w=1}^{\lceil d/2 \rceil} c_{w,1}^2 + 1 \bigg) 
&= 
\sum_{i = \lceil k/2 \rceil}^{\min\{k, \lceil d/2 \rceil\}} \sum_{w 
	=1}^{\lceil d/2 \rceil} c_{w,i}^2 \binom{i}{k-i} \binom{k}{i} \\
 + 2 \sum_{j 
	= \big\lceil \frac{k+1}{2} \big\rceil}^{\min\{k, \lceil d/2 \rceil\}} 
&\sum_{i= k-j}^{j-1} \sum_{w =1}^{\lceil d/2 \rceil}c_{w,i} c_{w,j} 
\binom{i}{k-j} \binom{k}{i} 
 &&\forall 2 \leq k \leq d, \nonumber
\end{align}	
	\end{subequations}
	then
	for any choice of the vertex~$(g,h) \in V(\Bgr)$ the polynomials
	\begin{align*}
	s_{g^*h^*} &= x_{g^*h^*}  && \text{ for all } (g^*,h^*) \in 
	V(\Bgr)\setminus T_{gh}
	\text{ and}\\
	s_w &= \sum_{i=0}^{\lceil d/2 \rceil} c_{w,i} \rho_{gh}^i && \text{ for all 
	} 
	1 \leq w \leq \lceil d/2 \rceil
	\end{align*}
	form a $\lceil d/2 \rceil$-SOS-certificate of $f_\viz$, and therefore 
	Vizing's conjecture holds on the graph classes~$\G$ and~$\H$.
	
\end{theorem}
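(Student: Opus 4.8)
The plan is to follow the same route as in the proofs of Theorems~\ref{thm:cert32} and~\ref{thm:cert2121}, now carried out for a general $d = n_\G + n_\H - 1$. First I would fix an arbitrary vertex $(g,h)\in V(\Bgr)$, abbreviate $\rho^i := \rho_{gh}^i$ and $N := \lceil d/2\rceil$, and recall that $\lvert T_{gh}\rvert = d$, so that $\rho^i$ is well defined for $0\le i\le d$ with $\rho^0 = 1$. Expanding
\[
\sum_{w=1}^N s_w^2 \;=\; \sum_{w=1}^N\sum_{i=0}^N\sum_{j=0}^N c_{w,i}c_{w,j}\,\rho^i\rho^j ,
\]
I would reduce, modulo $I_\viz$, every factor $\rho^i\rho^j$ with $1\le i\le j\le N$ via Corollary~\ref{cor:reduce-prod-polynomials} (handling the terms with $i=0$ or $j=0$ separately using $\rho^0=1$), and then read off, for each $0\le k\le d$, the coefficient of $\rho^k$ in the reduced polynomial.

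This coefficient computation is the technical core of the proof. Using $c_{w,0}=-c_{w,1}$ from~\eqref{eq:linSysEqCertComplete}, the constant term of $\sum_w s_w^2$ equals $\sum_w c_{w,1}^2$, and the coefficient of $\rho^1$ equals $-\sum_w c_{w,1}^2$, since only the diagonal term $c_{w,1}^2(\rho^1)^2\equiv c_{w,1}^2(\rho^1+2\rho^2)$ and the cross term $2c_{w,0}c_{w,1}\rho^1$ feed $\rho^1$. For $2\le k\le d$ I would split the contributions to the coefficient of $\rho^k$ into the diagonal squares $c_{w,i}^2(\rho^i)^2$, which by Corollary~\ref{cor:reduce-prod-polynomials} contribute $\binom{i}{k-i}\binom{k}{i}$ exactly for $\lceil k/2\rceil\le i\le\min\{k,N\}$, and the off-diagonal terms $2c_{w,i}c_{w,j}\rho^i\rho^j$ with $i<j$, which contribute $\binom{i}{k-j}\binom{k}{i}$ precisely for $j\le\min\{k,N\}$ and $i\ge k-j$; moreover the boundary instance $j=k$, $i=k-j=0$ reproduces exactly the cross term $2c_{w,0}c_{w,k}$ coming from $\rho^0\rho^k$, so a single expression covers all cases. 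Collecting everything shows that the coefficient of $\rho^k$ is precisely the right-hand side of~\eqref{eq:linSysEqCert}. I expect this careful bookkeeping of the index ranges — in particular absorbing the $\rho^0$-contributions into the same formula and keeping the diagonal and off-diagonal parts apart — to be the only real obstacle; each individual reduction is merely an application of Corollary~\ref{cor:reduce-prod-polynomials}.

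Next I would incorporate the remaining squares. Since $x_{g^*h^*}^2\equiv x_{g^*h^*}\bmod I_\viz$ by~\eqref{subeqn:vertices}, and since $f_\viz = \sum_{(g',h')\in V(\Bgr)}x_{g'h'}-1$ while $\rho^1 = \sum_{(g',h')\in T_{gh}}x_{g'h'}$, we get
\[
\sum_{(g^*,h^*)\in V(\Bgr)\setminus T_{gh}} s_{g^*h^*}^2 \;\equiv\; \sum_{(g^*,h^*)\in V(\Bgr)\setminus T_{gh}} x_{g^*h^*} \;=\; f_\viz - \rho^1 + 1 \pmod{I_\viz}.
\]
Adding this to the previous step, the whole sum of squares becomes, modulo $I_\viz$,
\[
f_\viz + \Bigl(\sum_{w=1}^N c_{w,1}^2 + 1\Bigr)\bigl(1-\rho^1\bigr) + \sum_{k=2}^d C_k\,\rho^k ,
\]
where $C_k$ denotes the coefficient of $\rho^k$ computed above; the equations~\eqref{eq:linSysEqCert} state exactly that $C_k = (-1)^k\bigl(\sum_{w}c_{w,1}^2+1\bigr)$, so this collapses to $f_\viz + \bigl(\sum_{w}c_{w,1}^2+1\bigr)\sum_{k=0}^d(-1)^k\rho^k$.

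Finally, Lemma~\ref{lemma:reduction-polynomial-as-sum} identifies $\sum_{k=0}^d(-1)^k\rho^k$ with $\prod_{(g',h')\in T_{gh}}(1-x_{g'h'})$, which lies in $I_\viz$ by Lemma~\ref{lemma:reduction-polynomial-in-ideal}; hence that last term vanishes modulo $I_\viz$ and $\sum_{w=1}^N s_w^2 + \sum_{(g^*,h^*)\in V(\Bgr)\setminus T_{gh}} s_{g^*h^*}^2 \equiv f_\viz$. As every $s_w$ has degree at most $N=\lceil d/2\rceil$ (being a combination of the $\rho^i$ with $i\le N$) and each $s_{g^*h^*}$ has degree $1\le N$, the listed polynomials form a $\lceil d/2\rceil$-SOS-certificate of $f_\viz$; by the correspondence recalled in Section~\ref{chapter:thbackground} (Theorems~\ref{thm:bij-var-iviz} and~\ref{cor:conjecture-sos}) this certifies $\gamma(\bgr)\ge k_\G k_\H$ for all $G\in\G$ and $H\in\H$, i.e., Vizing's conjecture holds for these graph classes.
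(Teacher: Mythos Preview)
Your proposal is correct and follows essentially the same route as the paper's proof: the paper packages the coefficient computation for $\sum_w s_w^2$ into a separate Lemma~\ref{lemma:coeffs_rho_ssquared} and Corollary~\ref{cor:sos-reduced-coeffsrho} (both proved exactly via Corollary~\ref{cor:reduce-prod-polynomials} as you do), then combines with the $s_{g^*h^*}^2$ terms and applies Lemmas~\ref{lemma:reduction-polynomial-in-ideal} and~\ref{lemma:reduction-polynomial-as-sum} just as you outline. Your observation that the $i=0$ cross terms are already captured by the general formula at $j=k$, $i=0$ is precisely what makes the stated index ranges in~\eqref{eq:linSysEqCert} work.
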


Note that the SOS-certificates of Theorem~\ref{thm:general-cert} are 
of the smallest possible degree according to 
Theorem~\ref{thm:min-degree-cert}.  
Additionally, note that the system of 
equations~\eqref{eq:linSysEqCertComplete} depends on $d=n_\G 
+ n_\H - 
1$ and not on $n_\G$ or~$n_\H$ explicitly.
This means that if we find a solution for some~$d$, then we have found 
certificates for all graph classes~$\G$ and~$\H$ with $n_\G + n_\H - 1 = d$.
Furthermore, it can be observed that 
the constant terms in the 
polynomials~$s_w$ have to be the negative coefficients of the monomials of 
degree~$1$, 
as $c_{w,0}$ is the coefficient of~$\rho_{gh}^0 = 1$  and~$c_{w,1}$ 
is the coefficient of~$\rho_{gh}^1$ in~$s_w$.

	Furthermore, observe that the system of equations~\eqref{eq:linSysEqCert} 
	coincides with 
	those for $n_\G = 3$, $n_\H = 2$, $k_\G = k_\H = 1$ (so $d=4$) in 
	Remark~\ref{rmk:cert32} and for $n_\G = n_\H = 2$, $k_\G=k_\H=1$ (so $d=3$)
	in Remark~\ref{rmk:cert2121}.
	So for these graph classes we were able to find a solution 
	of~\eqref{eq:linSysEqCertComplete}.

To prove Theorem~\ref{thm:general-cert}, we first consider some useful lemma.
\begin{lemma}
	\label{lemma:coeffs_rho_ssquared}
	Let~$d = n_\G + n_\H - 1$ and fix some vertex~$(g,h) \in V(\Bgr)$.
	Furthermore, let~$c_i \in \mathbb{R}$ for $0 \leq i \leq \lceil d/2 \rceil$ 
	define the polynomial $s \in P_\Bgr$  as
	\begin{equation*}
	s = \sum_{i = 0}^{\lceil d/2 \rceil} c_i \rho_{gh}^i.
	\end{equation*}
	Then $s$ squared is congruent to
	\begin{equation*}
	\sum_{k=0}^{d} \Bigg( \sum_{i = \lceil k/2 \rceil}^{\min \{k, \lceil d/2 
	\rceil\}} c_i^2 \binom{i}{k-i} \binom{k}{i}+ 2 \sum_{j = \big\lceil 
	\frac{k+1}{2} \big\rceil}^{\min\{k, \lceil d/2 \rceil\}} \sum_{i= 
	k-j}^{j-1} 
	c_i c_j \binom{i}{k-j} \binom{k}{i} \Bigg) \rho_{gh}^k 
	\end{equation*}
%
%
%
%
%
%
	modulo $I_\viz$.
\end{lemma}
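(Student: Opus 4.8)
The plan is to expand $s^2$ directly and reduce every product $\rho^i_{gh}\rho^j_{gh}$ modulo $I_\viz$ using Corollary~\ref{cor:reduce-prod-polynomials}, then collect the coefficient of each $\rho^k_{gh}$. Writing $s^2 = \sum_{i,j} c_i c_j\, \rho^i_{gh}\rho^j_{gh}$, I would split the double sum into the diagonal terms $i=j$ and twice the strictly-upper-triangular terms $i<j$; the $i=0$ or $j=0$ factors are harmless since $\rho^0_{gh}=1$ and contribute only to low-degree terms, so the main work is for $1\le i\le j$. For each such pair, Corollary~\ref{cor:reduce-prod-polynomials} (applied with $d=n_\G+n_\H-1$) gives
\[
\rho^i_{gh}\rho^j_{gh} \equiv \sum_{r=0}^{\min\{i,\,d-j\}} \binom{i}{r}\binom{j+r}{i}\rho^{j+r}_{gh} \quad \mod I_\viz,
\]
so the contribution of the pair $(i,j)$ lands in degrees $k=j, j+1, \dots, j+\min\{i,d-j\}$, i.e.\ exactly those $k$ with $j\le k\le d$ and $k-j\le i$. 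Substituting $r=k-j$, the coefficient of $\rho^k_{gh}$ coming from $(i,j)$ is $\binom{i}{k-j}\binom{k}{i}$.

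Next I would, for a fixed target degree $k$ with $0\le k\le d$, gather all pairs $(i,j)$ with $i\le j$ that contribute to $\rho^k_{gh}$. The constraints $j\le k$, $k-j\le i\le j$ translate, after also recalling the summation range $0\le i,j\le\lceil d/2\rceil$ forced by the definition of $s$, into exactly the index sets appearing in the claimed formula: the diagonal terms require $i=j$, hence $k-i\le i\le k$, i.e.\ $\lceil k/2\rceil\le i\le\min\{k,\lceil d/2\rceil\}$, contributing $c_i^2\binom{i}{k-i}\binom{k}{i}$; and the off-diagonal terms with $i<j$ require $j\ge\lceil(k+1)/2\rceil$ (so that $i<j$ is compatible with $i\ge k-j$) and $j\le\min\{k,\lceil d/2\rceil\}$, with $k-j\le i\le j-1$, each contributing $2\,c_ic_j\binom{i}{k-j}\binom{k}{i}$. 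Summing these up yields precisely the stated coefficient of $\rho^k_{gh}$. I should double-check the edge cases $k=0$ and $k=1$ separately (where the diagonal sum may be empty or consist of just the $i=k$ term and the off-diagonal sum is empty), to confirm the formula still reads $c_0^2$ and $2c_0c_1$ respectively, matching the raw expansion of $s^2$.

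The main obstacle is bookkeeping the index ranges: making sure that the conditions $\lceil d/2\rceil\ge i,j$, $i\le j$, $r\le\min\{i,d-j\}$, and $k=j+r$ are faithfully and exhaustively translated into the ceiling-function bounds $\lceil k/2\rceil$, $\lceil(k+1)/2\rceil$, $\min\{k,\lceil d/2\rceil\}$ without gaining or losing any pair $(i,j,k)$, and that the diagonal/off-diagonal split (with its factor of $2$) is consistent. The binomial identities themselves are immediate from Corollary~\ref{cor:reduce-prod-polynomials}; no new combinatorial lemma is needed, only a careful re-indexing.
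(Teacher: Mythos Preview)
Your proposal is correct and follows essentially the same approach as the paper's own proof: expand $s^2$ into diagonal and off-diagonal parts, apply Corollary~\ref{cor:reduce-prod-polynomials} with the substitution $k=j+r$, and then translate the constraints $j\le k$, $k-j\le i\le j$, $i,j\le\lceil d/2\rceil$ into the ceiling-function bounds appearing in the statement. The paper carries out exactly this bookkeeping (without the explicit edge-case check for $k=0,1$, which is harmless), so there is nothing to add.
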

\begin{proof}
	By expanding the square of the polynomial~$s$, we get that
	\begin{equation*}
	s^2 = \sum_{i=0}^{\lceil d/2 \rceil} c_i^2 \rho_{gh}^i \rho_{gh}^i + 2\sum_{i=0}^{\lceil d/2 \rceil} \sum_{j=i+1}^{\lceil d/2 \rceil} c_i c_j \rho_{gh}^i \rho_{gh}^j.
	\end{equation*}
	Next, we use Corollary~\ref{cor:reduce-prod-polynomials} yielding  
	that
	\begin{equation*}
	\rho^i_{gh} \rho^j_{gh} \equiv \sum_{r = 0}^{\min \{i, d - j\}} 
	\binom{i}{r} \binom{j+r}{i} \rho^{j+r}_{gh}  = \sum_{k = j}^{\min \{i + j, 
	d\}} \binom{i}{k-j} \binom{k}{i} \rho^{k}_{gh} \quad \mod I_{\viz}
	\end{equation*}	
	holds for all $0 \leq i \leq j 
	\leq \lceil d/2 \rceil$.
	We apply this  
	to~$\rho_{gh}^i\rho_{gh}^j$ for~$0 \leq i \leq j \leq \lceil 
	d/2 \rceil$ and sum up the coefficients of~$\rho_{gh}^k$ for each~$k$ 
	with~$0 \leq k \leq d$. 
	From the product  $c_i c_j \rho_{gh}^i  \rho_{gh}^j$ we get a contribution 
	of
	\begin{equation}
	\label{eq:cicj-contributiontok}
	c_i c_j \binom{i}{k-j} \binom{k}{i}
	\end{equation}	
	to the coefficient of $\rho_{gh}^k$ if $k$ is between $j$ and the minimum 
	of~$i+j$ and~$d$.
	For~$i=j$, this means that $i$ has to be between $k/2$ and $k$ and additionally, $i$ is less or equal to $\lceil d/2 \rceil$.
	In the case of~$i < j$, combining the inequalities~$k \leq j + i$ and~$i \leq j - 1$, we get that the inequalities~$j \geq (k+1)/2$ and~$i \geq k - j$ have to hold.
	Moreover, it holds that~$j \leq k$ and~$j \leq \lceil d/2 \rceil$.
	Therefore, collecting all coefficients of~$\rho_{gh}^k$ yields the stated 
	result.
\end{proof}
In the next corollary, we apply Lemma~\ref{lemma:coeffs_rho_ssquared} to the sum of all $s_w^2$ for $1 \leq w \leq \lceil d/2 \rceil$. 
\begin{corollary}
	\label{cor:sos-reduced-coeffsrho}
	Let $d = n_\G + n_\H - 1 $ and fix $(g,h) \in V(\Bgr)$. 
	Furthermore, let $c_{w,i} \in \R$ for $0 \leq i \leq \lceil d/2 \rceil$ and 
	for $1 \leq w \leq \big\lceil d/2 \big\rceil$ 
	define the polynomial $s_w$ as
	\[s_w = \sum_{i=0}^{\lceil d/2 \rceil} c_{w,i} \rho_{gh}^i. \]
	Then the sum of all polynomials $s_w$ squared is congruent to  
	\begin{equation*}
	\sum_{k=0}^{d} \Bigg( \sum_{i = \lceil k/2 \rceil}^{\min\{k, \lceil d/2 
	\rceil\}} \sum_{w =1}^{\lceil d/2 \rceil} c_{w,i}^2 \binom{i}{k-i} 
	\binom{k}{i} + 2 \sum_{j = \big\lceil \frac{k+1}{2} \big\rceil}^{\min\{k, 
	\lceil d/2 \rceil\}} \sum_{i= k-j}^{j-1} \sum_{w =1}^{\lceil d/2 
	\rceil}c_{w,i} c_{w,j} \binom{i}{k-j} \binom{k}{i} \Bigg) \rho_{gh}^k 
	\end{equation*}
	modulo $I_\viz$.
\end{corollary}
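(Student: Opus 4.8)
The plan is to derive Corollary~\ref{cor:sos-reduced-coeffsrho} directly from Lemma~\ref{lemma:coeffs_rho_ssquared}, by applying that lemma once for every index $w$ and then adding up the resulting congruences.

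First I would observe that, for each fixed $w$ with $1 \leq w \leq \lceil d/2 \rceil$, the polynomial $s_w = \sum_{i=0}^{\lceil d/2 \rceil} c_{w,i}\rho_{gh}^i$ is exactly of the shape treated in Lemma~\ref{lemma:coeffs_rho_ssquared}, with the coefficients $c_i$ of that lemma instantiated as $c_{w,i}$. Hence the lemma expresses $s_w^2$ modulo $I_\viz$ as a combination of the $\rho_{gh}^k$ for $0 \leq k \leq d$, whose coefficient of $\rho_{gh}^k$ is
\[ \sum_{i = \lceil k/2 \rceil}^{\min\{k,\lceil d/2 \rceil\}} c_{w,i}^2 \binom{i}{k-i}\binom{k}{i} \; + \; 2\sum_{j = \lceil (k+1)/2 \rceil}^{\min\{k,\lceil d/2 \rceil\}}\sum_{i=k-j}^{j-1} c_{w,i}c_{w,j}\binom{i}{k-j}\binom{k}{i}. \]

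Second, I would use that congruence modulo $I_\viz$ is additive: if $f_w \equiv g_w \mod I_\viz$ for all $w$, then $\sum_w f_w \equiv \sum_w g_w \mod I_\viz$, since $I_\viz$ is an ideal and in particular closed under finite sums. Summing the $\lceil d/2 \rceil$ congruences for the $s_w^2$ therefore shows that $\sum_{w} s_w^2$ is congruent modulo $I_\viz$ to the sum over $w$ of the expressions above. Finally, interchanging the finite sums over $w$ and $k$ and pushing $\sum_w$ inside the two inner index sums — which is legitimate because the binomial factors $\binom{i}{k-i}$, $\binom{k}{i}$ and $\binom{i}{k-j}$ do not depend on $w$ — produces precisely the claimed formula, with $\sum_{w=1}^{\lceil d/2 \rceil} c_{w,i}^2$ and $\sum_{w=1}^{\lceil d/2 \rceil} c_{w,i}c_{w,j}$ appearing as the $w$-dependent parts of the coefficient of $\rho_{gh}^k$.

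There is essentially no obstacle in this argument; the only thing to keep straight is the bookkeeping of the summation ranges, and this is entirely inherited from Lemma~\ref{lemma:coeffs_rho_ssquared}. All the substantive work — the reduction of the products $\rho_{gh}^i\rho_{gh}^j$ via Corollary~\ref{cor:reduce-prod-polynomials} and the combinatorial collection of the resulting coefficients of $\rho_{gh}^k$ — has already been carried out there, so the proof of the corollary reduces to the observation that passing from a single squared polynomial to a sum of squared polynomials is linear and commutes with reduction modulo $I_\viz$.
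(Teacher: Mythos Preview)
Your proof is correct and matches the paper's approach exactly: the paper states the corollary immediately after Lemma~\ref{lemma:coeffs_rho_ssquared} without a separate proof, simply noting that one applies the lemma to the sum of all $s_w^2$. Your explicit write-up---apply the lemma for each $w$ and sum the resulting congruences, then interchange the finite sums---is precisely what is intended.
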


Finally, we have all ingredients to prove Theorem~\ref{thm:general-cert}.
\begin{proof}[Proof of Theorem~\ref{thm:general-cert}]
	The proof is analogous to the ones of Theorem~\ref{thm:cert32} 
	and~\ref{thm:cert2121}.
	First, we fix a vertex $(g,h) \in V(\Bgr)$.
	For brevity, we write $\rho^k$ for $\rho_{gh}^k$ for all $0 \leq k \leq \lceil d/2 \rceil$.
	Next, we use the fact that $x_{g^*h^*}^2 \equiv x_{g^*h^*} \ \mod I_\viz$, to get the congruence
	\begin{align*}
	\sum_{(g^*,h^*) \in V(\Bgr)\setminus T_{gh}} x_{g^*h^*}^2 + \rho^1 - 1 &\equiv  \sum_{(g^*,h^*) \in V(\Bgr)\setminus T_{gh}} x_{g^*h^*} + \rho^1 - 1 \\
	& = \sum_{(g^*,h^*) \in V(\Bgr)\setminus T_{gh}} x_{g^*h^*} + \sum_{(g',h') \in T_{gh}} x_{g'h'}  - 1 \\
	& = f_\viz \quad \mod I_\viz.
	\end{align*}
	The above and Corollary~\ref{cor:sos-reduced-coeffsrho} yield the congruence
	\begin{multline}
	\label{eq:proof-cert-general}
	\small
	\sum_{(g^*,h^*) \in V(\Bgr)\setminus T_{gh}} x_{g^*h^*}^2  + 
	\sum_{w=1}^{\lceil d/2 \rceil} s_w^2 \equiv  f_\viz - \rho^1 + 1 
	+ 
	\sum_{k=0}^{d} \Bigg( \sum_{i = \lceil k/2 \rceil}^{\min\{k, \lceil d/2 
	\rceil\}} \sum_{w =1}^{\lceil d/2 \rceil} c_{w,i}^2 \binom{i}{k-i} 
	\binom{k}{i}\\
	+ 2 \sum_{j = \big\lceil \frac{k+1}{2} \big\rceil}^{\min\{k, \lceil d/2 
	\rceil\}} \sum_{i= k-j}^{j-1} \sum_{w =1}^{\lceil d/2 \rceil}c_{w,i} c_{w,j} 
	\binom{i}{k-j} \binom{k}{i} \Bigg) \rho^k \ \mod I_\viz.
	\end{multline}
	By writing down the coefficients of $\rho^0 = 1$ and $\rho^1$ in~\eqref{eq:proof-cert-general} explicitly, we get that the sum of squares is congruent to
	\begin{multline*}
	f_\viz - \rho^1 + \rho^0 + \bigg(\sum_{w = 1}^{\lceil d/2 \rceil} 
	c_{w,0}^2\bigg)\rho^0 + \bigg(\sum_{w= 1}^{\lceil d/2 \rceil} (c_{w,1}^2 + 
	2c_{w,0}c_{w,1})\bigg) \rho^1 \\
	+ \sum_{k=2}^{d} \Bigg( \sum_{i = \lceil k/2 \rceil}^{\min\{k, \lceil d/2 
	\rceil\}} \sum_{w =1}^{\lceil d/2 \rceil} c_{w,i}^2 \binom{i}{k-i} 
	\binom{k}{i}
	\\
	+ 2 \sum_{j = \big\lceil \frac{k+1}{2} \big\rceil}^{\min\{k, \lceil d/2 
	\rceil\}} \sum_{i= k-j}^{j-1} \sum_{w =1}^{\lceil d/2 \rceil}c_{w,i} c_{w,j} 
	\binom{i}{k-j} \binom{k}{i} \Bigg) \rho^k
	\end{multline*}
	modulo~$I_\viz$.
	If the coefficients $c_{w,i}$ satisfy $ c_{w,0} = - c_{w,1}$ 
	(and thus also  $c_{w,1}^2 + 2c_{w,0}c_{w,1} = -c_{w,1}^2$)
	and  
	 the system of 
	equations~\eqref{eq:linSysEqCert}, then the above expression equals
	\begin{equation*}
	f_\viz + \bigg( \sum_{w = 1}^{\lceil d/2 \rceil} c_{w,1}^2 + 1\bigg) 
	\bigg(\sum_{k= 0}^d (-1)^k \rho^k\bigg),
	\end{equation*}
	which is congruent to $f_\viz$
	modulo $I_\viz$ due to Lemma~\ref{lemma:reduction-polynomial-in-ideal} and 
	Lemma~\ref{lemma:reduction-polynomial-as-sum}.
	Hence, the polynomials stated in Theorem~\ref{thm:general-cert} form a 
	$\lceil d/2 \rceil$-SOS-certificate for the graph classes~$\G$ and~$\H$ if 
	the coefficients~$c_{w,i}$ fulfill~\eqref{eq:linSysEqCertComplete}.	
\end{proof}

To summarize, Theorem~\ref{thm:general-cert} states that if we find a solution 
to the system of equations~\eqref{eq:linSysEqCertComplete}, then we obtain an 
SOS-certificate of minimum degree.
In fact, it can also be deduced that if there is an SOS-certificate 
of the form given by Theorem~\ref{thm:general-cert}, then  the system of 
equations~\eqref{eq:linSysEqCertComplete} has to hold.

\subsection{Finding a Solution of the System of Equations}

Next, we consider the problem of finding such solutions.
Towards that end, 
let the vector $c_i$ be defined as
\begin{equation*}
c_i = (c_{w,i})_{1 \leq w \leq \lceil d/2 \rceil} 
\end{equation*}
for all $0 \leq i \leq \lceil d/2 \rceil$, 
so~$c_i$ denotes the vector collecting all coefficients of $\rho_{gh}^i$ in the 
general SOS-certificate for $k_\G = k_\H = 1$ and $d = n_\G + n_\H - 1$ stated 
in Theorem~\ref{thm:general-cert}.
It is easy to see that the system of equations~\eqref{eq:linSysEqCertComplete} 
has a 
solution if and only if there are vectors $c_i \in \R^{\lceil d/2 \rceil}$ for 
$0 \leq i \leq \lceil d/2 \rceil$ that are a solution to the  
system of equations 
\begin{subequations}
	\label{eq:linSysEqCertDotProd}
\begin{align}
	c_0 &= -c_1 && \forall 1 \leq w \leq \lceil d/2 \rceil  \\
(-1)^k (\langle c_1,c_1 \rangle  + 1) &= 
\sum_{i = \lceil k/2 \rceil}^{\min\{k, \lceil d/2 \rceil\}} \langle c_i, c_i 
\rangle \binom{i}{k-i} \binom{k}{i} \\
& + 2 \sum_{j = \big\lceil \frac{k+1}{2} 
\big\rceil}^{\min\{k, \lceil d/2 \rceil\}} \sum_{i= k-j}^{j-1} \langle c_i, c_j 
\rangle \binom{i}{k-j} \binom{k}{i}
&& \forall 2 \leq k \leq d. \nonumber
\end{align}
\end{subequations}



The system of equations~\eqref{eq:linSysEqCertDotProd} can be rewritten 
using~$F_{i,j} = \langle c_i , c_j \rangle$ for $0 \leq i,j\leq \lceil d/2 
\rceil$. Let~$F$ be the~$\lceil d/2 \rceil \times \lceil d/2 \rceil$-matrix 
with $F = (F_{i,j})_{1 \leq i,j\leq \lceil d/2 \rceil}$, i.e., $F$ is the Gram 
matrix of the matrix $C = (c_{w,i})_{1 \leq w,i \leq \lceil d/2 \rceil}$ and 
$F = C^\transposed C$  holds.
 As any 
Gram matrix is positive semidefinite and any positive semidefinite matrix is 
the Gram matrix of some set of vectors (which can for example be determined 
using Cholesky decomposition), we obtain the following result.

%
%

\begin{observation}
	\label{obs:SDPforCert}
	The system of equations~\eqref{eq:linSysEqCertComplete} has a real solution 
	if and 
	only 
	if there is a positive semidefinite matrix $F = (F_{i,j})_{1 \leq i,j\leq 
	\lceil d/2 \rceil}$ such that 
\begin{align}
\label{eq:sysOfEquInF}
(-1)^k (F_{1,1} + 1) = 
\sum_{i = \lceil k/2 \rceil}^{\min\{k, \lceil d/2 \rceil\}} F_{i,i} 
\binom{i}{k-i} \binom{k}{i} 
 + 2 \sum_{j = \big\lceil \frac{k+1}{2} 
	\big\rceil}^{\min\{k, \lceil d/2 \rceil\}} \sum_{i= k-j}^{j-1} F_{i,j}  
	\binom{i}{k-j} \binom{k}{i},
\end{align}
where we substitute $F_{0,j} = -F_{1,j}$, 
holds for all  $2 \leq k \leq d$.
In particular, 
$F = C^\transposed C$ for 
$C = (c_{w,i})_{1 \leq w,i \leq \lceil d/2 \rceil}$
and $c_{w,0} = -c_{w,1}$ for all $1 \leq w\leq \lceil d/2 \rceil$  holds for 
corresponding 
solutions. 	
\end{observation}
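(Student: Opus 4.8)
The plan is to prove Observation~\ref{obs:SDPforCert} by establishing the claimed equivalence in two directions, passing through the intermediate reformulation~\eqref{eq:linSysEqCertDotProd}. First I would recall the reduction already set up in the text: by Theorem~\ref{thm:general-cert}, the original system~\eqref{eq:linSysEqCertComplete} has a real solution $(c_{w,i})$ if and only if the reformulated system~\eqref{eq:linSysEqCertDotProd} in the vectors $c_i = (c_{w,i})_{1\le w\le \lceil d/2\rceil} \in \R^{\lceil d/2\rceil}$ has a solution, since every quantity appearing in~\eqref{eq:linSysEqCert} of the form $\sum_{w} c_{w,i}c_{w,j}$ is exactly the inner product $\langle c_i, c_j\rangle$, and the linear relation $c_{w,0} = -c_{w,1}$ for all $w$ is precisely $c_0 = -c_1$ as vectors. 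So it suffices to prove that~\eqref{eq:linSysEqCertDotProd} is solvable in the $c_i$ if and only if there is a positive semidefinite $F$ satisfying~\eqref{eq:sysOfEquInF}.

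For the forward direction, given vectors $c_1, \dots, c_{\lceil d/2\rceil} \in \R^{\lceil d/2\rceil}$ solving~\eqref{eq:linSysEqCertDotProd} (with $c_0$ determined by $c_0 = -c_1$), I would form the matrix $C = (c_{w,i})_{1\le w, i\le \lceil d/2\rceil}$ whose columns are the $c_i$ for $1 \le i \le \lceil d/2\rceil$, and set $F = C^\transposed C$. Then $F$ is positive semidefinite as a Gram matrix, and $F_{i,j} = \langle c_i, c_j\rangle$ for $1 \le i,j \le \lceil d/2\rceil$; extending by the substitution $F_{0,j} = -F_{1,j}$ matches $\langle c_0, c_j\rangle = \langle -c_1, c_j\rangle$. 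Substituting these identities into~\eqref{eq:linSysEqCertDotProd} yields exactly~\eqref{eq:sysOfEquInF}. Conversely, given a positive semidefinite $\lceil d/2\rceil \times \lceil d/2\rceil$ matrix $F$ satisfying~\eqref{eq:sysOfEquInF}, I would write $F = C^\transposed C$ for some real matrix $C$ of size $\lceil d/2\rceil \times \lceil d/2\rceil$ --- for instance via a Cholesky (or spectral) decomposition, which is exactly where positive semidefiniteness is used --- and define the column vectors $c_i$ of $C$ for $1 \le i \le \lceil d/2\rceil$, together with $c_0 := -c_1$ and correspondingly $c_{w,0} := -c_{w,1}$. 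Then $\langle c_i, c_j\rangle = F_{i,j}$ for $i,j \ge 1$ and $\langle c_0, c_j\rangle = -F_{1,j} = F_{0,j}$, so~\eqref{eq:sysOfEquInF} translates back into~\eqref{eq:linSysEqCertDotProd}, hence into~\eqref{eq:linSysEqCertComplete} via Theorem~\ref{thm:general-cert}. The final sentence of the observation, identifying $F = C^\transposed C$ and $c_{w,0} = -c_{w,1}$ for the corresponding solutions, is recorded as part of this construction.

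I do not expect any genuine obstacle here: the statement is essentially the standard fact that a system of polynomial equations whose monomials are all degree-two products $c_{w,i}c_{w,j}$ summed over $w$ is solvable over $\R$ precisely when the associated Gram-matrix system is solvable with a positive semidefinite matrix. The only point requiring a little care is bookkeeping of the index ranges --- the matrices $F$ and $C$ are indexed $1,\dots,\lceil d/2\rceil$ while the vectors $c_i$ run over $i = 0, \dots, \lceil d/2\rceil$, so one must consistently treat the $i=0$ case through the substitution $F_{0,j} = -F_{1,j}$ rather than as a genuine extra row/column of $F$ --- and noting that the number of vectors ($\lceil d/2\rceil$ of them, for $i = 1, \dots, \lceil d/2\rceil$) equals their dimension, so that any Gram matrix of such a tuple arises from an honest square $C$ and conversely every positive semidefinite matrix of that size is realizable. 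Accordingly I would keep the write-up short, essentially just spelling out the two substitutions and invoking the Gram-matrix characterization of positive semidefinite matrices.
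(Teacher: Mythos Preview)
Your proposal is correct and matches the paper's own argument essentially verbatim: the paper justifies the observation in the preceding paragraph by passing to the inner-product form~\eqref{eq:linSysEqCertDotProd}, setting $F_{i,j}=\langle c_i,c_j\rangle$, and invoking the standard fact that Gram matrices are positive semidefinite and conversely any positive semidefinite matrix factors as $C^\transposed C$ (e.g.\ via Cholesky). One small quibble: the equivalence between~\eqref{eq:linSysEqCertComplete} and~\eqref{eq:linSysEqCertDotProd} is a direct rewriting and does not rely on Theorem~\ref{thm:general-cert}, so you should drop that reference.
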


With Observation~\ref{obs:SDPforCert} we have transformed the task of finding a 
certificate from solving a system of 
$\lceil d/2 \rceil$ linear and $d-1$ quadratic 
equations~\eqref{eq:linSysEqCertComplete} in
$\lceil d/2 \rceil \left(\lceil d/2 \rceil + 1\right)$ variables to solve an 
SDP 
with matrix variable of dimension $\lceil d/2 \rceil$ with $d-1$ linear 
equality constraints.

The objective function of this SDP can be chosen arbitrarily, as any 
feasible solution leads to an SOS-certificate.
Unfortunately, just solving this SDP with an off-the-shelf SDP solver is not 
enough because 
any feasible solution obtained from an SDP solver is 
numerical, i.e., the system of equations is not fulfilled exactly, but only 
with small numerical errors.
So in order to find a certificate, there is still some lucky guessing required.

Thus, we follow a different road to find a positive semidefinite matrix $F$ 
that is an exact solution to the system of linear 
equations~\eqref{eq:sysOfEquInF}. In fact, any solution $F$ to the system of 
equations~\eqref{eq:sysOfEquInF} can be represented as linear expression in 
some free variables, which 
are a subset of all variables $F_{i,j}$. We iteratively fix the free 
variables by solving SDPs in the following way. 
When we want to fix the free variable $F_{i,j}$, we solve the SDP  
with matrix variable $F$, the system of linear 
equations~\eqref{eq:sysOfEquInF} and the already fixed free variables two 
times, one time with maximizing and one 
time with minimizing the value of the free variable $F_{i,j}$ as objective 
function. Let $F_{i,j}^{min}$ and $F_{i,j}^{max}$ denote the optimal objective 
function values of these SDPs. We fix the free variable $F_{i,j}$ to an 
arbitrary rational number in the interval $[F_{i,j}^{min}, F_{i,j}^{max}]$,
where we try to set $F_{i,j}$ to a rational number with small denominator in 
order to obtain ``nice'' values in $F$. Then we proceed with the next free 
variable. 

Clearly, the choice of $F_{i,j}$ in the interval $[F_{i,j}^{min}, F_{i,j}^{max}]$ 
makes sure that we find a positive semidefinite matrix $F$ 
that is an exact solution to the system of linear 
equations~\eqref{eq:sysOfEquInF} with this procedure if it exists. 
If the system of linear equations~\eqref{eq:sysOfEquInF} has no positive 
semidefinite solution, then we are not able to find 
a certificate of the specific form stated in Theorem~\ref{thm:general-cert}.

As already mentioned before Observation~\ref{obs:SDPforCert}, from $F$ we can 
obtain the coefficients $c_{w,i}$ of the SOS-certificate from 
Theorem~\ref{thm:general-cert} with a Cholesky decomposition.
%
%
We now consider an example to demonstrate our approach to determine an 
SOS-certificate.

\begin{example}
	\label{ex:4121}
	Let~$n_\G = 4$, $n_\H = 2$ and~$k_\G = k_\H = 1$, so $d=5$.
	To find a certificate as stated in Theorem~\ref{thm:general-cert}  
	we need to find a $3\times 3$ positive semidefinite matrix~$F$ such that 
	its entries satisfy the system of equations~\eqref{eq:sysOfEquInF}, i.e., 
	the equations
	\begin{align}
	\begin{split}
	\label{algn:matrixEquations-example}
	F_{1,1} + 1 &= 2 F_{1,1} + 2 F_{2,1} + F_{2,2}, \\
	- (F_{1,1} + 1)&= 6 F_{2,1} + 6 F_{2,2} + 4 F_{3,1} + 6 F_{3,2} + F_{3,3}, \\
	F_{1,1} + 1 &= 6 F_{2,2} + 8 F_{3,1} + 24 F_{3,2} + 12 F_{3,3} \text{\quad \ 
		and}\\
	- (F_{1,1} + 1)&=20 F_{3,2} + 30 F_{3,3}.
	\end{split}	
	\end{align}
	All possible solutions of this system of linear equations can be 
	written as
	\begin{small} 
	\begin{align}
	\begin{split}
	\label{algn:matrixConstraints-example}	
	F_{2,1} &= -\frac{1}{2} F_{1,1} - \frac{1}{2} F_{2,2} + \frac{1}{2}, \\
	F_{3,1} &= \frac{47}{40} F_{1,1} - \frac{3}{4} F_{2,2} - \frac{133}{40}, \\
	F_{3,2} &= -\frac{1}{2} F_{1,1} + \frac{7}{4} \text{\quad and} \\
	F_{3,3} &= \frac{3}{10} F_{1,1} - \frac{6}{5},
	\end{split}
	\end{align}
\end{small}
	where $F_{1,1}$ and $F_{2,2}$ are free parameters.
	Thus, we can write any matrix $F$, which 
	fulfills~\eqref{algn:matrixConstraints-example} and 
	hence~\eqref{algn:matrixEquations-example}, as
	\begin{footnotesize}
	\begin{equation}
	\label{eq:termOfConstraint-F-psd}
	F_{1,1}
	\begin{pmatrix}
	1 & -1/2 & 47/40 \\
	-1/2 & 0 & -1/2 \\
	47/40 & -1/2 & 3/10 
	\end{pmatrix}
	+ F_{2,2}
	\begin{pmatrix}
	0 & -1/2 & -3/4 \\
	-1/2 & 1 & 0 \\
	-3/4 & 0 & 0
	\end{pmatrix}
	+ \begin{pmatrix}
	0 & 1/2 & -133/40 \\
	1/2 & 0 & 7/4 \\
	-133/40 & 7/4 & -6/5
	\end{pmatrix}
	\end{equation}
\end{footnotesize}
	for the free variables $F_{1,1}$ and $F_{2,2}$.
	Next, we need to find exact values for $F_{1,1}$ and $F_{2,2}$ such that the 
	resulting matrix $F$ is positive semidefinite.
	
	Let~$F_{1,1}^{min}$ be the result of the SDP which minimizes~$F_{1,1}$ under 
	the 
	constraint that~\eqref{eq:termOfConstraint-F-psd} is positive semidefinite.
	Furthermore, let $F_{1,1}^{max}$ be the optimal solution of the same SDP 
	with an 
	objective that maximizes~$F_{1,1}$.
	For this example we get the (numerical) optimal solutions~$F_{1,1}^{min} = 
4.68455$ and~$F_{1,1}^{max} = 38.41658$.	
	We can set 
	$F_{1,1}$ to be any rational value in the interval 
	$[F_{1,1}^{min},F_{1,1}^{max}]$ 
	and choose $F_{1,1}= 6$. 
	
	As a consequence, we get that $F$ has to be of the form
	\begin{footnotesize}
	\begin{equation*}
	F_{2,2}
	\begin{pmatrix}
	0 & -1/2 & -3/4 \\
	-1/2 & 1 & 0 \\
	-3/4 & 0 & 0
	\end{pmatrix}
	+ 
	\begin{pmatrix}
	6 & -5/2 & 149/40 \\
	-5/2 & 0 & -5/4 \\
	149/40 & -5/4 & 3/5
	\end{pmatrix}.
	\end{equation*}
\end{footnotesize}
	To find a rational value for $F_{2,2}$, we follow the same strategy.
	We determine $F_{2,2}^{min} = 
	2.64289$ and $F_{2,2}^{max} = 3.26414$ and 
	choose~$F_{2,2} = 3$ and finally obtain the matrix 
	\begin{equation*}
	F = 
	\begin{pmatrix}
	6 & -4 & 59/40 \\
	-4 & 3 & -5/4 \\
	59/40 & -5/4 & 3/5
	\end{pmatrix},
	\end{equation*}
	which is positive semidefinite and fulfills the system of 
	equations~\eqref{algn:matrixEquations-example} exactly. 
To determine the solution of the system of 
equations~\eqref{eq:linSysEqCertComplete}, i.e.,\ the coefficient matrix~$C$, we 
compute the Cholesky factorization of~$F = C^\transposed C$ and obtain
\begin{equation*}
C = 
\begin{pmatrix}
\sqrt{6} & -2/3 \sqrt{6} & 59/240 \sqrt{6}\\
0 & 1/3 \sqrt{3} & -4/15 \sqrt{3} \\
0 & 0 & 1/80 \sqrt{154}
\end{pmatrix}.
\end{equation*}	
	
\end{example}





As a consequence of Example~\ref{ex:4121} and Theorem~\ref{thm:general-cert}, 
we have found the following 
$3$-SOS-certificate for $n_\G = 4$ and $n_\H = 2$ as well as for $n_\G = n_\H = 
3$ and $k_\G = k_\H = 1$.

\begin{corollary}
	\label{thm:cert-d6}
	Let $\G$ and $\H$ be two graph classes with $d = n_\G + n_\H - 1 = 5$ and $k_\G=k_\H=1$, then Vizing's conjecture is true for these graph classes, as for any vertex $(g,h)\in V(\Bgr)$ the polynomials
	\begin{align*}
	s_{g^*h^*} &= &x_{g^*h^*} && && && && {\small \text{for all } (g^*,h^*) \in 
	V(\Bgr)\setminus T_{gh}}\\
	s_1 &= & - \sqrt{6} &&+ \sqrt{6} \rho_{gh}^1  &&-\frac{2}{3} \sqrt{6} 
	\rho_{gh}^2  &&+ \frac{59}{240} \sqrt{6} \rho_{gh}^3,\\
	s_2 &=  & && && \frac{1}{3} \sqrt{3} \rho_{gh}^2 && - \frac{4}{15} \sqrt{3} 
	\rho_{gh}^3   &&\text{and}\\
	s_3 &= & && && &&\frac{1}{80} \sqrt{154} \rho_{gh}^3
	\end{align*}
	form a $3$-SOS-certificate of $f_\viz$.
\end{corollary}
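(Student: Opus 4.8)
The plan is to obtain the corollary immediately from work already in place, by feeding the explicit matrix $F$ constructed in Example~\ref{ex:4121} through Observation~\ref{obs:SDPforCert} and then through Theorem~\ref{thm:general-cert}. Recall that in Example~\ref{ex:4121} we produced, for $d = n_\G + n_\H - 1 = 5$, the positive semidefinite matrix
\[
F = \begin{pmatrix} 6 & -4 & 59/40 \\ -4 & 3 & -5/4 \\ 59/40 & -5/4 & 3/5 \end{pmatrix}
\]
together with a verification that its entries satisfy the system of linear equations~\eqref{eq:sysOfEquInF} for $d = 5$ exactly, and its Cholesky factorization $F = C^\transposed C$ was computed there to be
\[
C = \begin{pmatrix} \sqrt{6} & -\frac{2}{3}\sqrt{6} & \frac{59}{240}\sqrt{6} \\ 0 & \frac{1}{3}\sqrt{3} & -\frac{4}{15}\sqrt{3} \\ 0 & 0 & \frac{1}{80}\sqrt{154} \end{pmatrix}.
\]

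First I would invoke Observation~\ref{obs:SDPforCert}: since $F$ is positive semidefinite and solves~\eqref{eq:sysOfEquInF}, the numbers $c_{w,i}$ read off from $C = (c_{w,i})_{1 \le w,i \le 3}$ together with $c_{w,0} = -c_{w,1}$ for $1 \le w \le 3$ form a real solution of the full system~\eqref{eq:linSysEqCertComplete}. Reading off the entries of $C$ gives $c_{1,0} = -\sqrt{6}$, $c_{1,1} = \sqrt{6}$, $c_{1,2} = -\frac{2}{3}\sqrt{6}$, $c_{1,3} = \frac{59}{240}\sqrt{6}$; then $c_{2,0} = c_{2,1} = 0$, $c_{2,2} = \frac{1}{3}\sqrt{3}$, $c_{2,3} = -\frac{4}{15}\sqrt{3}$; and finally $c_{3,0} = c_{3,1} = c_{3,2} = 0$, $c_{3,3} = \frac{1}{80}\sqrt{154}$. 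If one wishes the argument to stand on its own rather than merely pointing to the example, the short computation $C^\transposed C = F$ (six independent entries) and the substitution of $F$ into~\eqref{algn:matrixEquations-example} should be reproduced at this point.

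Next I would apply Theorem~\ref{thm:general-cert} with $d = 5$, so $\lceil d/2 \rceil = 3$. Because the $c_{w,i}$ just found solve~\eqref{eq:linSysEqCertComplete}, the theorem asserts that for every vertex $(g,h) \in V(\Bgr)$ the polynomials $s_{g^*h^*} = x_{g^*h^*}$ for $(g^*,h^*) \in V(\Bgr) \setminus T_{gh}$ and $s_w = \sum_{i=0}^{3} c_{w,i}\,\rho_{gh}^i$ for $1 \le w \le 3$ form a $3$-SOS-certificate of $f_\viz$; substituting the explicit coefficients yields exactly the polynomials $s_1$, $s_2$, $s_3$ displayed in the statement. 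That this certifies Vizing's conjecture for all graph classes $\G$, $\H$ with $n_\G + n_\H - 1 = 5$ and $k_\G = k_\H = 1$ — in particular for $n_\G = 4$, $n_\H = 2$ and for $n_\G = n_\H = 3$ — then follows from Theorem~\ref{cor:conjecture-sos}.

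There is no genuinely hard step here: the substance is entirely in Theorem~\ref{thm:general-cert} and in the computation behind Example~\ref{ex:4121}. The only points to watch are bookkeeping — matching the row index $w$ of $C$ to the polynomial $s_w$, and honoring the constraint $c_{w,0} = -c_{w,1}$ for the constant terms — and, if a self-contained proof is preferred, re-checking positive semidefiniteness of $F$ and that it satisfies~\eqref{eq:sysOfEquInF}. Should one want to bypass Observation~\ref{obs:SDPforCert} altogether, an alternative is to argue exactly as in the proofs of Theorem~\ref{thm:cert32} and Theorem~\ref{thm:cert2121}: expand $\sum_{w=1}^{3} s_w^2$, reduce each product $\rho_{gh}^i \rho_{gh}^j$ via Corollary~\ref{cor:reduce-prod-polynomials}, and check that the $\rho_{gh}^k$-coefficients collapse so that the sum is congruent modulo $I_\viz$ to $f_\viz + \big(\sum_{w=1}^{3} c_{w,1}^2 + 1\big)\sum_{k=0}^{5} (-1)^k \rho_{gh}^k$, which lies in $I_\viz$ by Lemma~\ref{lemma:reduction-polynomial-as-sum} and Lemma~\ref{lemma:reduction-polynomial-in-ideal}.
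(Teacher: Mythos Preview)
Your proposal is correct and matches the paper's approach exactly: the paper simply states the corollary ``as a consequence of Example~\ref{ex:4121} and Theorem~\ref{thm:general-cert}'', and you have spelled out precisely how those two pieces fit together via Observation~\ref{obs:SDPforCert}. There is nothing to add.
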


%

\subsection{Theoretical Properties of Certificates}
It turns out that for even values of $d$ we can say more about the system of 
equations~\eqref{eq:sysOfEquInF}, in particular $F_{1,1}$ is fixed as stated in 
the following corollary.
\begin{corollary}
	\label{cor:fixedF11}
	Let $d \geq 4$ be even, then $F_{1,1} = d-1$ holds.	
\end{corollary}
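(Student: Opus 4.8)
The plan is to repackage the linear system~\eqref{eq:sysOfEquInF} as a single identity between univariate polynomials and then extract $F_{1,1}$ from a divisibility constraint. Fix any (symmetric) solution $F=(F_{i,j})_{1\le i,j\le d/2}$ of~\eqref{eq:sysOfEquInF}; note that $d$ even gives $\lceil d/2\rceil=d/2$. Extend $F$ to the symmetric matrix $\tilde F=(\tilde F_{i,j})_{0\le i,j\le d/2}$ by $\tilde F_{i,j}=F_{i,j}$ for $i,j\ge 1$, $\tilde F_{0,j}=\tilde F_{j,0}=-F_{1,j}$ and $\tilde F_{0,0}=F_{1,1}$, i.e.\ exactly the substitution $F_{0,j}=-F_{1,j}$ of Observation~\ref{obs:SDPforCert}. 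Introduce
\[
h(m)\;=\;v(m)^{\transposed}\tilde F\,v(m)\;=\;\sum_{i,j=0}^{d/2}\tilde F_{i,j}\binom{m}{i}\binom{m}{j},\qquad v(m):=\Big(\tbinom{m}{0},\tbinom{m}{1},\dots,\tbinom{m}{d/2}\Big)^{\transposed},
\]
a polynomial in $m$ of degree at most $d$. I would show (A) the closed form $h(m)=(m-1)+(F_{1,1}+1)\binom{m-1}{d}$ and (B) that $(m-1)^2$ divides $h(m)$; together these pin down $F_{1,1}=d-1$.

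For (A), the crucial elementary ingredient is the identity $\binom{m}{i}\binom{m}{j}=\sum_{k=j}^{i+j}\binom{i}{k-j}\binom{k}{i}\binom{m}{k}$, valid as polynomials in $m$ for $0\le i\le j\le d/2$ (it is the overlap-counting identity underlying the proof of Lemma~\ref{lemma:product-two-polynomials-reduced}, and both sides have degree $i+j\le d$ and agree at all non-negative integers $m$). Plugging this into the diagonal and off-diagonal parts of $h(m)$ and collecting the coefficient of $\binom{m}{k}$, I would verify that for $2\le k\le d$ the coefficient is exactly the right-hand side of~\eqref{eq:sysOfEquInF}, hence equals $(-1)^k(F_{1,1}+1)$ by hypothesis, while the coefficients of $\binom{m}{0}$ and $\binom{m}{1}$ simplify (using $\tilde F_{0,j}=-F_{1,j}$) to $F_{1,1}$ and $-F_{1,1}$. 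Therefore $h(m)=F_{1,1}-F_{1,1}\binom{m}{1}+(F_{1,1}+1)\sum_{k=2}^{d}(-1)^k\binom{m}{k}$; using $\sum_{k=0}^{d}(-1)^k\binom{m}{k}=(-1)^d\binom{m-1}{d}=\binom{m-1}{d}$ (with $d$ even) and a one-line simplification gives the stated closed form.

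For (B), since $d\ge 4$ we have $v(1)=(1,1,0,\dots,0)^{\transposed}=e_0+e_1$ in the standard basis $e_0,\dots,e_{d/2}$, and the convention $\tilde F_{0,j}=-\tilde F_{1,j}$ means row $0$ of $\tilde F$ is minus row $1$, so $(e_0+e_1)^{\transposed}\tilde F=0$, i.e.\ $v(1)\in\ker\tilde F$. Writing $v(m)-v(1)=(m-1)\,w(m)$ with $w(m)$ a polynomial vector (each coordinate $\binom{m}{i}-\binom{1}{i}$ vanishes at $m=1$), expanding $h(m)=\big(v(1)+(m-1)w(m)\big)^{\transposed}\tilde F\big(v(1)+(m-1)w(m)\big)$ and discarding all terms containing the factor $v(1)$ (they vanish because $v(1)\in\ker\tilde F$ and $\tilde F$ is symmetric) yields $h(m)=(m-1)^2\,w(m)^{\transposed}\tilde F\,w(m)$.

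Combining (A) and (B), $(m-1)^2$ divides $g(m):=(m-1)+(F_{1,1}+1)\binom{m-1}{d}$. As $g(1)=0$ holds automatically, this forces $g'(1)=0$. Writing $\binom{m-1}{d}=\tfrac{1}{d!}\prod_{\ell=1}^{d}(m-\ell)$, only the summand differentiating the factor $(m-1)$ survives at $m=1$, giving $\frac{d}{dm}\binom{m-1}{d}\big|_{m=1}=\tfrac{1}{d!}\prod_{\ell=2}^{d}(1-\ell)=\tfrac{(-1)^{d-1}(d-1)!}{d!}=-\tfrac{1}{d}$ (here $d$ even). Hence $g'(1)=1-\tfrac{F_{1,1}+1}{d}=0$, i.e.\ $F_{1,1}=d-1$. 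I expect step (A) to be the only real obstacle: converting~\eqref{eq:sysOfEquInF} into the closed form for $h(m)$ needs the product identity for $\binom{m}{i}\binom{m}{j}$ together with a careful handling of the $\binom{m}{0}$- and $\binom{m}{1}$-coefficients, which are not among the listed equations but are forced by $\tilde F_{0,j}=-F_{1,j}$; step (B) and the final derivative computation are short and mechanical.
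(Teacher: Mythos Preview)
Your proof is correct and takes a genuinely different route from the paper. The paper multiplies the $k$-th equation of~\eqref{eq:sysOfEquInF} by $(-1)^k\frac{d}{k(k-1)}$, sums over $2\le k\le d$, and then shows by a direct (Gosper-type) computation that in this particular linear combination every $F_{i,j}$ with $(i,j)\neq(1,1)$ cancels, leaving $(F_{1,1}+1)(d-1)=dF_{1,1}$. You instead repackage the whole system as a single polynomial identity $h(m)=(m-1)+(F_{1,1}+1)\binom{m-1}{d}$ via the product formula for $\binom{m}{i}\binom{m}{j}$, and then extract $F_{1,1}$ from the structural fact that $v(1)\in\ker\tilde F$, which forces $(m-1)^2\mid h(m)$ and hence $g'(1)=0$.

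What each approach buys: the paper's argument is a one-shot linear combination but needs the nontrivial verification that $\sum_{k=j}^{i+j}\frac{(-1)^k}{k(k-1)}\binom{i}{k-j}\binom{k}{i}=0$ for $i\ge 2$ (plus a separate check for the $F_{1,j}$ terms). Your argument replaces that combinatorial identity by the transparent observation that row~$0$ of $\tilde F$ equals minus row~$1$, so $v(1)=e_0+e_1$ is annihilated by $\tilde F$; the only binomial manipulation left is the standard alternating sum $\sum_{k=0}^d(-1)^k\binom{m}{k}=\binom{m-1}{d}$. In effect, your generating-function viewpoint explains \emph{why} the paper's specific weights $(-1)^k\frac{d}{k(k-1)}$ work (they are what one gets by evaluating the second derivative of the binomial-basis expansion at $m=1$), and it does so without having to guess them.
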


\begin{proof}
	We show that $F_{1,1} = d-1$ holds by adding up the equations 
	of~\eqref{eq:sysOfEquInF} multiplied by $(-1)^k \frac{d}{k(k-1)}$ for all 
	$k$ with $2 \leq k \leq d$.
	
	On the left-hand side of the resulting equation we get
	\begin{equation*}
	d(F_{1,1} + 1)\sum_{k=2}^d \frac{1}{k(k-1)} = d (F_{1,1} + 1) \frac{d-1}{d} = (F_{1,1} + 1)(d-1).
	\end{equation*}
	The right-hand side is
	\begin{equation}
	\label{eq:rhs-sum-equations-in-F}
	\sum_{k = 2}^d \frac{(-1)^k d}{k(k-1)} \Bigg( \sum_{i = \lceil k/2 \rceil}^{\min\{k, \lceil d/2 \rceil\}} F_{i,i} 
	\binom{i}{k-i} \binom{k}{i} 
	+ 2 \sum_{j = \big\lceil \frac{k+1}{2} 
		\big\rceil}^{\min\{k, \lceil d/2 \rceil\}} \sum_{i= k-j}^{j-1} F_{i,j}  
	\binom{i}{k-j} \binom{k}{i} \Bigg).
	\end{equation}
	It is enough to show that~\eqref{eq:rhs-sum-equations-in-F} 
	equals~$dF_{1,1}$\revision{.}
	The variable $F_{1,1}$ appears only once in~\eqref{eq:rhs-sum-equations-in-F}, namely for $k = 2$ with the coefficient $\frac{d}{2}\binom{1}{1}\binom{2}{1} = d$.
	Thus, it remains to show that the coefficients of all other variables in~\eqref{eq:rhs-sum-equations-in-F} sum up to zero.

	We start with the variables~$F_{i,j}$ for $1 < i \leq j \leq d/2$.
	With the same arguments as in the proof of Lemma~\ref{lemma:coeffs_rho_ssquared} to obtain the bounds on $k$ for~\eqref{eq:cicj-contributiontok}, $F_{i,j}$ appears in all summands with $k$ between $j$ and $\min  \{d, i+j\} = i + j$.
	Hence, the coefficient of $F_{i,j}$ in~\eqref{eq:rhs-sum-equations-in-F} for $i = j$ is
	\begin{equation}
	\label{eq:gosper-sum}
	\sum_{k=j}^{i+j}\frac{\left(-1\right)^{k} d{i \choose  k-j} {k \choose i}}{{\left(k - 1\right)} k}
	\end{equation}
	and for $i < j$ it is two times~\eqref{eq:gosper-sum}.
	It can be shown that~\eqref{eq:gosper-sum} is equal to zero.
	
	The variables left to consider are $F_{1,j}$ for $1 < j \leq d/2$. The variable $F_{1,j}$  appears only in the summands of~\eqref{eq:rhs-sum-equations-in-F} for $k = j$ and $k = j+1$. Moreover, the variable~$F_{0,j}$, which is equal to $-F_{1,j}$, appears in the summand with $k = j$ only.
	Therefore, when we substitute $F_{0,j} = -F_{1,j}$, the coefficient of $F_{1,j}$ in~\eqref{eq:rhs-sum-equations-in-F} is
	\begin{equation*} 
	(-1)^j \frac{2d}{j(j-1)} \Bigg( \binom{1}{0}\binom{j}{1} - 
	\binom{0}{0}\binom{j}{0} \Bigg) + (-1)^{j+1}\frac{2d}{(j+1)j}\binom{1}{1} 
	\binom{j+1}{1} = 0,
	\end{equation*}
	which completes the proof.
\end{proof}

Corollary~\ref{cor:fixedF11} shows that for all even $d\geq4$, the left-hand 
sides of~\eqref{eq:sysOfEquInF} are fixed to  $(-1)^k d$.
This implies that for all certificates of the form stated in 
Theorem~\ref{thm:general-cert} for any fixed vertex $gh$, the sum of the 
polynomials squared and then reduced by the polynomials of degree 2 in the 
Gröbner basis stated in Theorem~\ref{thm:gb} equals 
\begin{equation*}
	f_\viz + d \sum_{k=0}^d (-1)^k \rho_{gh}^k,
\end{equation*}
which is congruent to $f_\viz$ modulo $I_\viz$.

Moreover, the fact that $F_{1,1}$ is fixed implies that $F_{\frac d 2, \frac d 2}$ and $F_{\frac d 2 - 1, \frac d 2}$ are fixed too.


\begin{observation}
	For all even $d$ in~\eqref{eq:sysOfEquInF} the equation for $k = d$ is
	\begin{equation*}
	F_{1,1} =\binom{d}{d/2} F_{\frac{d}{2},\frac{d}{2}}  - 1
	\end{equation*}
	and the equation for $k = d-1$ is
	\begin{equation*}
	F_{1,1} = - \frac{d}{2}\binom{d-1}{d/2} F_{\frac{d}{2},\frac{d}{2}} - 2 \binom{d-1}{d/2 - 1} F_{\frac{d}{2}-1,\frac{d}{2}} - 1 
	\end{equation*}
	Since $F_{1,1} + 1 = d$ by Corollary~\ref{cor:fixedF11}, this implies that
	\begin{align*}
	F_{\frac{d}{2},\frac{d}{2}} &= \frac{F_{1,1} + 1}{\binom{d}{d/2}}  = \frac{d}{\binom{d}{d/2}}\text{ and}\\
	F_{\frac{d}{2} - 1, \frac{d}{2}} & = -\frac{d + d^2 \binom{d-1}{d/2}/(2\binom{d}{d/2})} {2 \binom{d - 1}{d/2-1}} = - \frac{d + d^2/4}{\binom{d}{d/2}} = - F_{\frac{d}{2}, \frac{d}{2}} (1 + d/4)
	\end{align*}
	holds.
\end{observation}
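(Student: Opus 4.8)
The plan is to specialize the system of equations~\eqref{eq:sysOfEquInF} at the two largest values of the index, namely $k = d$ and $k = d-1$, exploiting that for even $d$ all the ceilings collapse. First I would set $k = d$. Since $d$ is even we have $\lceil d/2\rceil = d/2$ and $\min\{d,\lceil d/2\rceil\} = d/2$, so the single sum over $i$ runs only over $i = d/2$, contributing $F_{\frac d2,\frac d2}\binom{d/2}{d/2}\binom{d}{d/2} = \binom{d}{d/2}F_{\frac d2,\frac d2}$; meanwhile the double sum over $j$ is empty because its lower limit $\lceil(d+1)/2\rceil = d/2+1$ already exceeds $d/2$. As $(-1)^d = 1$, this yields exactly $F_{1,1} = \binom{d}{d/2}F_{\frac d2,\frac d2} - 1$, the first displayed equation.

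Next I would take $k = d-1$. Here $\lceil (d-1)/2\rceil = d/2$ and, since $d \geq 4$, $\min\{d-1,\lceil d/2\rceil\} = d/2$. The single sum again leaves only $i = d/2$, now with $\binom{d/2}{(d-1)-d/2}\binom{d-1}{d/2} = \binom{d/2}{d/2-1}\binom{d-1}{d/2} = \frac d2\binom{d-1}{d/2}$. In the double sum the outer index $j$ runs over $\lceil d/2\rceil = d/2$ only, and then the inner index $i$ over $(d-1)-j = d/2-1$ only, contributing $F_{\frac d2-1,\frac d2}\binom{d/2-1}{d/2-1}\binom{d-1}{d/2-1} = \binom{d-1}{d/2-1}F_{\frac d2-1,\frac d2}$. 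Since $(-1)^{d-1} = -1$, rearranging gives the second displayed equation $F_{1,1} = -\frac d2\binom{d-1}{d/2}F_{\frac d2,\frac d2} - 2\binom{d-1}{d/2-1}F_{\frac d2-1,\frac d2} - 1$.

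Finally I would substitute $F_{1,1} = d-1$ from Corollary~\ref{cor:fixedF11}. The first equation then reads $d = \binom{d}{d/2}F_{\frac d2,\frac d2}$, giving $F_{\frac d2,\frac d2} = d/\binom{d}{d/2}$ immediately. For the second equation I would use the binomial identities $\binom{d-1}{d/2} = \binom{d-1}{d/2-1}$ (because $(d-1)-d/2 = d/2-1$) and $\binom{d}{d/2} = 2\binom{d-1}{d/2-1}$ (the absorption identity $\binom{d}{d/2} = \frac{d}{d/2}\binom{d-1}{d/2-1}$). Writing $B := \binom{d-1}{d/2} = \frac12\binom{d}{d/2}$, the second equation becomes $d = -\frac d2 B F_{\frac d2,\frac d2} - 2B F_{\frac d2-1,\frac d2}$; plugging in $F_{\frac d2,\frac d2} = d/(2B)$ turns this into $-d^2/4 - 2B F_{\frac d2-1,\frac d2} = d$, hence $F_{\frac d2-1,\frac d2} = -(d + d^2/4)/(2B) = -(d+d^2/4)/\binom{d}{d/2}$, and factoring out $F_{\frac d2,\frac d2} = d/\binom{d}{d/2}$ gives $F_{\frac d2-1,\frac d2} = -F_{\frac d2,\frac d2}(1+d/4)$, as claimed.

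There is no genuine obstacle here: the entire argument is bookkeeping. The one place to be careful is the evaluation of the index ranges in the nested sums of~\eqref{eq:sysOfEquInF} for even $d$ — in particular checking that the double sum is empty for $k = d$ and collapses to a single summand for $k = d-1$ — together with reading off the small binomial coefficients such as $\binom{d/2}{d/2-1} = d/2$ correctly.
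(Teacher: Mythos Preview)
Your proposal is correct and follows exactly the approach implicit in the paper: the Observation is stated without a separate proof, and the only thing to do is to specialize~\eqref{eq:sysOfEquInF} at $k=d$ and $k=d-1$, read off the index ranges, and simplify using $F_{1,1}+1=d$ together with the elementary binomial identities $\binom{d-1}{d/2}=\binom{d-1}{d/2-1}$ and $\binom{d}{d/2}=2\binom{d-1}{d/2-1}$. Your bookkeeping of the sum limits (in particular that the double sum is empty for $k=d$ and reduces to the single term $(i,j)=(d/2-1,d/2)$ for $k=d-1$) is accurate.
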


\subsection{Further Certificates for 
	\texorpdfstring{$k_\G = k_\H = 1$}{kG=kH=1}}


We implemented the above-described procedure 
to find an SOS-certificate as stated in 
Theorem~\ref{thm:general-cert} for Vizing's conjecture for the graph classes 
$\G$ and $\H$ satisfying 
$d = n_\G + n_\H - 1$ and $k_\G = k_\H = 1$
in SageMath~\cite{sagemath}.

\revision{
	The implementation is available as ancillary files from the arXiv page of this paper
	at \href{https://arxiv.org/src/2112.04007/anc}{arxiv.org/src/2112.04007/anc}.
}
In particular, the method \texttt{find\_certficate(d)} returns for a given 
integer $d$ the coefficient matrix $C$ of a $\lceil d/2 \rceil$-SOS-certificate 
for Vizing's 
conjecture.

With the help of this code, we were able to find 
SOS-certificates for Vizing's conjecture on the 
graph classes $\G$ and $\H$ with $k_\G = k_\H = 1$ and $d = n_\G + n_\H - 1$ 
for $6 \leq d \leq 14$.
\begin{corollary}
	For all graph classes $\G$ and $\H$ with $k_\G = k_\H = 1$ and $d = n_\G + 
	n_\H - 1$ with $6 \leq d \leq 14$ Vizing's conjecture is true, because the 
	polynomials 
	\begin{align*}
s_{g^*h^*} &= x_{g^*h^*}  && \text{ for all } (g^*,h^*) \in 
V(\Bgr)\setminus T_{gh}
\text{ and}\\
s_w &= \sum_{i=0}^{\lceil d/2 \rceil} c_{w,i} \rho_{gh}^i && \text{ for all 
} 
1 \leq w \leq \lceil d/2 \rceil
\end{align*}
	form a $\lceil d/2 \rceil$-SOS-certificate of $f_\viz$ for every choice of 
	$(g,h) \in V(\Bgr)$.
	Here
	$c_{w,i}= 0 $ for all  $1 \leq i <  w \leq \lceil d/2 \rceil $ 
	and
	$c_{w,0}= -c_{w,1}$ for all  $1 \leq w \leq \lceil d/2 \rceil $ 
	hold for all 
	values of~$d$. 
	Furthermore, 
	for $d=6$ we have
\begin{small}
\begin{alignat*}{3}
&c_{1,1} = \sqrt{5}, \qquad \qquad && c_{1,2} = -\frac{3}{5} \sqrt{5}, \qquad 
\qquad && c_{1,3} = \frac{21}{100} \sqrt{5},\\
& && c_{2,2} = \frac{1}{5} \sqrt{5}, && c_{2,3} = - \frac{3}{25} \sqrt{5} \quad 
\text{and}\\
& && &&  c_{3,3} = \frac{1}{20} \sqrt{3};
\end{alignat*}
\end{small}
	for $d=7$ we have
\begin{small}	
\begin{alignat*}{4}
&c_{1,1} = \sqrt{7}, \qquad  && c_{1,2} =  - \frac{5}{7} \sqrt{7},
\qquad && c_{1,3} = \frac{9}{28} \sqrt{7},\qquad \qquad && c_{1,4} = 
-\frac{17}{245} \sqrt{7},\\
& && c_{2,2} = \frac{1}{7} \sqrt{21}, && c_{2,3} = -\frac{179}{1260} \sqrt{21}, 
&& c_{2,4} = \frac{109}{2205} \sqrt{21},\\
& && &&  c_{3,3} = \frac{1}{90} \sqrt{429},&& c_{3,4} = - \frac{53}{6435} 
\sqrt{429} \quad \text{and}\\
& && && && c_{4,4} = \frac{1}{5005} \sqrt{4147};
\end{alignat*}
\end{small}
	
	for $d=8$ we have
\begin{small}	
\begin{alignat*}{4}
&c_{1,1} = \sqrt{7}, \qquad && c_{1,2} = -\frac{5}{7} \sqrt{7}, 
\qquad && c_{1,3} = \frac{31}{98} \sqrt{7},\qquad \qquad && c_{1,4} = - 
\frac{8}{108} \sqrt{7},\\
& && c_{2,2} =  \frac{1}{7} \sqrt{21}, && c_{2,3} = - \frac{41}{294} \sqrt{21},&& 
c_{2,4} = \frac{16}{315} \sqrt{21},\\
& && &&  c_{3,3} = \frac{1}{21} \sqrt{15}, && c_{3,4} = - \frac{8}{225} \sqrt{15} 
\quad \text{and}\\
& && && && c_{4,4} = \frac{2}{525} \sqrt{35};
\end{alignat*}
\end{small}	
		
	for $d=9$ we have

\begin{flushleft}
\noindent $\splitatcommas{	
{c_{1,1} = 4,}\  
{c_{1,2} = -\frac{27}{8}},\   
{c_{1,3} = \frac{115}{48}},\  
{c_{1,4} = - \frac{103}{80}},\  
{c_{1,5} = \frac{11}{28}},\ 
{c_{2,2} = \frac{1}{8} \sqrt{39}},\ 
{c_{2,3} = -\frac{925}{5616}\sqrt{39}},\  
{c_{2,4} = \frac{607}{5616} \sqrt{39}},\  
{c_{2,5} = - \frac{355}{9828} \sqrt{39}},
{c_{3,3} = \frac{1}{351} \sqrt{24882}},\  
{c_{3,4}=-\frac{34253}{8957520} \sqrt{24882}},\   
{c_{3,5} = \frac{47513}{25081056} \sqrt{638\cdot210409}},\ 
{c_{4,4} = \frac{1}{76560} \sqrt{638\cdot210409}},\  
{c_{4,5} = -\frac{586549}{4510495612} \sqrt{210409\cdot638}} \ \allowbreak
\text{and } \allowbreak
{c_{5,5} = \frac{5}{17674356} \sqrt{1262454 \cdot 2417}};}$
\end{flushleft}
			
for $d=10$ we have

\begin{flushleft}
\noindent
$\splitatcommas{
{c_{1,1} = 3},\ 
{c_{1,2} = - \frac{7}{3}},\ 
{c_{1,3} = \frac{17}{12}},\  
{c_{1,4} = - \frac{7}{15}},\ 
{c_{1,5} = \frac{71}{1512}},\ 
{c_{2,2} = \frac{1}{3} \sqrt{5}},\   
{c_{2,3} = - \frac{5}{12} \sqrt{5}},\   
{c_{2,4} = \frac{251}{1200} \sqrt{5}},\  
{c_{2,5} = - \frac{379}{30240} \sqrt{5}},\ 
{c_{3,3} = \frac{1}{4} \sqrt{2}},\ 
{c_{3,4} = - \frac{33}{160} \sqrt{2}},\  
{c_{3,5} = \frac{193}{4032} \sqrt{2}},\ 
{c_{4,4} = \frac{1}{800} \sqrt{146170}},\  
{c_{4,5} = - \frac{135673}{2016\sqrt{146170}}} \ \allowbreak \text{and } \allowbreak
{c_{5,5} = \frac{1}{504 \sqrt{14617}} \sqrt{4176691};}}$
\end{flushleft}

for $d=11$ we have

\begin{flushleft}
\noindent $\splitatcommas{
	{c_{1,1} = \sqrt{87}},\ 
	{c_{1,2}= -\frac{28}{29} \sqrt{87}},\     
	{c_{1,3}\frac{215}{261}\sqrt{87}},\    
	{c_{1,4}=-\frac{279446473}{495701640}\sqrt{87}}, \    
	{c_{1,5}=\frac{1345}{4959}\sqrt{87}},\ 
	{c_{1,6} = -\frac{22906823}{330467760}\sqrt{87}},\ 
	{c_{2,2} = \sqrt{\frac{26}{29}}},\     
	{c_{2,3}=-\frac{157}{78}\sqrt{\frac{26}{29}}},\ 
	{c_{2,4} = \frac{2674088}{1322685}\sqrt{\frac{26}{29}}},\  
	{c_{2,5}=-\frac{5485}{4446}\sqrt{\frac{26}{29}}},\ 
	{c_{2,6} = \frac{1834513}{4702880}\sqrt{\frac{26}{29}}},\ 
	{c_{3,3}=\frac{1}{3} \sqrt{\frac{467}{78}}},\  
	{c_{3,4}=-\frac{3554462909}{5321670480}\sqrt{\frac{467}{78}}},\ 
	{c_{3,5}=\frac{282916}{452523} \sqrt{\frac{467}{78}}},\ 
	{c_{3,6}=-\frac{23866165}{88694508} \sqrt{\frac{467}{78}}},\ 
	{c_{4,4}=\frac{1}{759696} \sqrt{\frac{127230362521319}{14010}}},\ 
	{c_{4,5}=-\frac{82275318718}{41095407094386037}\sqrt{\frac{127230362521319}{14010}}},\ 
	{c_{4,6}=\frac{21037454688547}{20136749476249158130}\sqrt{\frac{127230362521319}{14010}}},\ 
	{c_{5,5}=\frac{1}{2907}\sqrt{\frac{132029134219450005907}{8906125376492330}}},\ 
	{c_{5,6}=-\frac{3893456665881898045477}{10234898484691764457910640}\sqrt{\frac{132029134219450005907}{8906125376492330}}} \ \allowbreak \text{and } \allowbreak
	{c_{6,6} = \frac{1}{13328} \sqrt{\frac{110317821367843091833849}{1980437013291750088605}}};
}$
\end{flushleft}

	for $d=12$ we have
		
\begin{flushleft}\noindent$\splitatcommas{
	{c_{1,1}=\sqrt{11}}, \  
	{c_{1,2}=-\frac{35}{44} \sqrt{11}}, \ 
	{c_{1,3} = 	\frac{1}{2} \sqrt{11}}, \  
	{c_{1,4}=-\frac{655199}{2676520}  \sqrt{11}}, \  
	{c_{1,5} = \frac{1}{11}  \sqrt{11}}, \  
	{c_{1,6} = -\frac{110207}{5353040} \sqrt{11}}, 
	{c_{2,2}=\frac{1}{4}  \sqrt{\frac{95}{11}}}, \  
	{c_{2,3}=-\frac{11}{38}  	\sqrt{\frac{95}{11}}}, \  
	{c_{2,4}=\frac{147757}{660440}  	\sqrt{\frac{95}{11}}}, \  
	{c_{2,5}=-\frac{2617}{22515}  \sqrt{\frac{95}{11}}}, 	\  
	{c_{2,6}=\frac{4172389}{138692400}  \sqrt{\frac{95}{11}}}, 
	{c_{3,3}=\sqrt{\frac{1}{38}}}, \ 
	{c_{3,4} =-\frac{38345}{36498}  \sqrt{\frac{1}{38}}}, \ 
	{c_{3,5}=\frac{424}{1185}  \sqrt{\frac{1}{38}}}, \ 
	{c_{3,6}=\frac{8117}{729960}  \sqrt{\frac{1}{38}}}, 
	{c_{4,4}=\frac{1}{182490}  \sqrt{\frac{176558597}{2}}}, \ 
	{c_{4,5}=-\frac{496865}{83688774978}  \sqrt{\frac{176558597}{2}}}, \ 
	{c_{4,6}=\frac{274043219}{128880713466120}  \sqrt{\frac{176558597}{2}}},
	{c_{5,5}=\frac{1}{2370}  \sqrt{\frac{3003702364301}{2471820358}}}, \ 
	{c_{5,6}=-\frac{18498609557237}{62645216509861656} \sqrt{\frac{3003702364301}{2471820358}}} \ \allowbreak  \text{and } \allowbreak
	{c_{6,6} = \frac{1}{3080} \sqrt{\frac{1479419046289663}{2372924867797790}}};
	}$
\end{flushleft}

for $d=13$ we have
		
\begin{flushleft}\noindent $\splitatcommas{
	{c_{1,1}=4  \sqrt{\frac{41}{7}}}, \ 
	{c_{1,2}=-\frac{643}{164} \sqrt{\frac{41}{7}}}, \  
	{c_{1,3}=\frac{2285}{656}  \sqrt{\frac{41}{7}}}, \  
	{c_{1,4}=-\frac{25057169756187379}{9706601210394348}  \sqrt{\frac{41}{7}}}, \ 
	{c_{1,5}=\frac{223240166469743567}{155305619366309568}  \sqrt{\frac{41}{7}}}, \  
	{c_{1,6}=-\frac{133}{260}  \sqrt{\frac{41}{7}}}, \  
	{c_{1,7}=\frac{25638546175376171}{328531117890270240}  \sqrt{\frac{41}{7}}}, \ 
	{c_{2,2}=\frac{1}{4}  \sqrt{\frac{4423}{287}}}, \ 
	{c_{2,3}=-\frac{36921}{70768}  \sqrt{\frac{4423}{287}}}, \ 
	{c_{2,4}=\frac{648304603385756183}{1047129198867663444} \sqrt{\frac{4423}{287}}},\ 
	{c_{2,5}-\frac{8656079938938263059}{16754067181882615104}  \sqrt{\frac{4423}{287}}}, \ {c_{2,6}=\frac{336651}{1149980}  	\sqrt{\frac{4423}{287}}}, \ 
	{c_{2,7}=-\frac{3101604353052717839}{35441295961674762720}  \sqrt{\frac{4423}{287}}}, \ 
	{c_{3,3}=\frac{1}{4}  \sqrt{\frac{52445}{4423}}}, \  
	{c_{3,4}=-\frac{876978256134844544}{1671406615081576485}  \sqrt{\frac{52445}{4423}}}, \  
	{c_{3,5}=\frac{139692919173390267203}{248323268526405649200}  \sqrt{\frac{52445}{4423}}},\  {c_{3,6}=-\frac{414247923}{1172670200}  \sqrt{\frac{52445}{4423}}}, \  
	{c_{3,7}=\frac{56813568067103416}{525299221882781181}  \sqrt{\frac{52445}{4423}}},\ 	{c_{4,4}=\frac{2}{3541078198497}  \sqrt{\frac{278022966275031721230952411511}{681785}}}, \ 
	{c_{4,5}=-\frac{16842806625507324664137697618319}{15988889469513083235667255314875525722629840}
	 		\sqrt{\frac{278022966275031721230952411511}{681785}}},\ 
	{c_{4,6}=\frac{7494161056453719279901}{8607591035874982089310286660380560}  
			\sqrt{\frac{278022966275031721230952411511}{681785}}},\  
	{c_{4,7}=-\frac{115260002843756514570159114279703}{364546679904898297773213421179161986475960352}
	  		\sqrt{\frac{278022966275031721230952411511}{681785}}}, \ 
	{c_{5,5}=\frac{1}{14204782259113680}
			\sqrt{\frac{17451872904879613634434062169896872769038117850574344762823}{278022966275031721230952411511}}},\
	{c_{5,6} = -\frac{151181192623630707882481830209965662197243013451}{1404805961351189379117404268428018670416492334499832456028200208} \times}\allowbreak{
	\qquad \qquad \times \sqrt{\frac{17451872904879613634434062169896872769038117850574344762823}{278022966275031721230952411511}}},\ 
	{c_{5,7}=\frac{65110165541673840570849188883311271190521123019507743585023}{1258569508109052577461744053186022770733948051592944510245658946785111648480} \times}\allowbreak{\qquad \qquad \times
	\sqrt{\frac{17451872904879613634434062169896872769038117850574344762823}{278022966275031721230952411511}}},  \
	{c_{6,6}=\sqrt{\frac{25352491093848053461206176651408840063078575112714792491}{680623043290304931742928424625978037992486596172399445750097}}},\ 
	{c_{6,7} = 
	-\frac{116354006097743659300270866764317970081269530370009757503847822914109}{2373228356611870629873418754566765279954894894468648815804099558215400} \times}\allowbreak {\qquad \qquad \times
	\sqrt{\frac{25352491093848053461206176651408840063078575112714792491}{680623043290304931742928424625978037992486596172399445750097}}} \ 
	\allowbreak  \text{and } \allowbreak
	{c_{7,7} = \frac{1}{46804638404700} \times} \allowbreak {\qquad \qquad \times \sqrt{\frac{41971232677905854359177826832232722232187999636495894239101559440686690856571}{1064804625941618245370659419359171282649300154734021284622}}};}$
\end{flushleft}

and for $d=14$ we have	
	
\begin{flushleft}\noindent $\splitatcommas{
	{c_{1,1}=\sqrt{13}}, \  
	{c_{1,2}=-\frac{32}{39}  \sqrt{13}}, \  
	{c_{13} = \frac{7}{13}  \sqrt{13}}, \ 
	{c_{1,4}= -\frac{1581}{5005}  \sqrt{13}}, \  
	{c_{1,5}= \frac{186021079786121}{1151722559447460}  \sqrt{13}},\ 
	{c_{1,6} = -\frac{53}{897}  \sqrt{13}}, \  
	{c_{1,7} = \frac{3286004765171}{309987438360600}  \sqrt{13}	},\ 
	{c_{2,2}=\frac{2}{3}  \sqrt{\frac{17}{13}}}, \  
	{c_{2,3}=-\frac{59}{68} \sqrt{\frac{17}{13}}}, \  
	{c_{2,4}=\frac{1293}{2618}  \sqrt{\frac{17}{13}}}, \  
	{c_{2,5}=-\frac{185610347559698}{1882623414481425}  \sqrt{\frac{17}{13}}}, \ 
	{c_{2,6}=-\frac{12097}{333132}  \sqrt{\frac{17}{13}}}, \  
	{c_{2,7}=\frac{153739557217}{7871226963600}  \sqrt{\frac{17}{13}}}, \ 
	{c_{3,3}=\frac{1}{4}  \sqrt{\frac{67}{17}}}, \ 
	{c_{3,4}=-\frac{19441}{51590}  \sqrt{\frac{67}{17}}}, \  
	{c_{3,5}=\frac{646437778225307}{2473250368044225} \sqrt{\frac{67}{17}}},\ 
	{c_{3,6}=-\frac{41037}{437644}  \sqrt{\frac{67}{17}}}, \ 
	{c_{3,7}=\frac{127375771980323}{9585765401612400}  \sqrt{\frac{67}{17}}},\ 
	{c_{4,4}=\frac{1}{385}  \sqrt{\frac{59509}{67}}}, \ 
	{c_{4,5}=-\frac{56914455235667}{11576949729765700}  \sqrt{\frac{59509}{67}}},\ 
	{c_{4,6}=\frac{2624828}{680247379}  \sqrt{\frac{59509}{67}}}, \ 
	{c_{4,7}=-\frac{51012398758058}{40932786544528725}  \sqrt{\frac{59509}{67}}},\ 
	{c_{5,5}=\frac{1}{14765673839070} \sqrt{\frac{9222821132677377658501193273}{297545}}},\ 
	{c_{5,6}=-\frac{28381206876870420343}{316278205102905312042981420910989}
			\sqrt{\frac{9222821132677377658501193273}{297545}}},\ 
	{c_{5,7}=\frac{800180049472559360738859359}{21631465553921823566411244940446165459600}
	  \sqrt{\frac{9222821132677377658501193273}{297545}}},\ 
	{c_{6,6}=\frac{2}{34293}  \sqrt{\frac{32861993291352160021413495104861}{46114105663386888292505966365}}},\ 
	{c_{6,7}=-\frac{465798003932716733895505153185993057604207}{9403204725977980081537461590679792988551578400}
	\sqrt{\frac{32861993291352160021413495104861}{46114105663386888292505966365}}}
	 \ \allowbreak \text{and } \allowbreak
	{c_{7,7} = \frac{1}{95380750264800} 
	\sqrt{\frac{27167242806224591574440191807735163931063797448111661}{98585979874056480064240485314583}}}.}$
\end{flushleft}
	
\end{corollary}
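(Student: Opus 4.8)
The plan is to derive the corollary directly from Theorem~\ref{thm:general-cert} together with Observation~\ref{obs:SDPforCert}. By Theorem~\ref{thm:general-cert}, for each fixed $d \in \{6, 7, \dots, 14\}$ it suffices to check that the coefficients $c_{w,i}$ listed above form a real solution of the system of equations~\eqref{eq:linSysEqCertComplete}. The equations $c_{w,0} = -c_{w,1}$ for $1 \leq w \leq \lceil d/2 \rceil$ hold by construction, so the only thing left to verify are the $d - 1$ equations~\eqref{eq:linSysEqCert} indexed by $2 \leq k \leq d$.

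First I would assemble, for each fixed $d$, the upper triangular matrix $C = (c_{w,i})_{1 \leq w, i \leq \lceil d/2 \rceil}$ (recall that $c_{w,i} = 0$ whenever $i < w$) together with its Gram matrix $F = C^\transposed C$, which is automatically positive semidefinite. By Observation~\ref{obs:SDPforCert}, the listed $c_{w,i}$ solve~\eqref{eq:linSysEqCertComplete} precisely when the entries $F_{i,j} = \sum_{w = 1}^{\lceil d/2 \rceil} c_{w,i} c_{w,j}$ satisfy the linear system~\eqref{eq:sysOfEquInF}, with the substitution $F_{0,j} = -F_{1,j}$. Since the binomial coefficients appearing in~\eqref{eq:sysOfEquInF} are explicit, this reduces the corollary, for each $d$, to a finite exact arithmetic check: expand the products $c_{w,i} c_{w,j}$, form the $F_{i,j}$, and verify the $d - 1$ rational identities~\eqref{eq:sysOfEquInF}. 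Once these hold, Theorem~\ref{thm:general-cert} immediately gives that $s_{g^*h^*}$ and $s_w$ form a $\lceil d/2 \rceil$-SOS-certificate of $f_\viz$ for every choice of $(g,h) \in V(\Bgr)$, and hence Vizing's conjecture holds on the graph classes $\G$ and $\H$.

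The main obstacle is computational rather than conceptual. For the larger values of $d$, in particular $d \in \{11, 12, 13, 14\}$, the coefficients $c_{w,i}$ are nested square roots of rationals with very large numerators and denominators, so the expansion of the products $c_{w,i} c_{w,j}$ and the verification that the resulting $F$ lies in the affine subspace defined by~\eqref{eq:sysOfEquInF} must be carried out with exact symbolic arithmetic to rule out rounding artefacts. This is precisely what the accompanying SageMath implementation does: it computes $C$ as an exact positive semidefinite solution of~\eqref{eq:sysOfEquInF} via the iterative SDP procedure described before Observation~\ref{obs:SDPforCert} --- fixing the free entries of $F$ one at a time to rational numbers in the intervals $[F_{i,j}^{min}, F_{i,j}^{max}]$ returned by the SDP solver and then taking a Cholesky factorization $F = C^\transposed C$ --- and it verifies the identities~\eqref{eq:linSysEqCertComplete} symbolically. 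Running this check for each $d$ in the stated range completes the proof.
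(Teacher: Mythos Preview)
Your proposal is correct and matches the paper's own justification: the corollary is stated there as a direct consequence of Theorem~\ref{thm:general-cert} together with the computational procedure of Observation~\ref{obs:SDPforCert}, with the listed coefficients produced and verified by the SageMath implementation. There is no separate proof in the paper beyond this appeal to the algorithm and to Theorem~\ref{thm:general-cert}, which is exactly what you do.
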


Due to the fact that we were able to find a feasible solution to the SDP 
derived in Observation~\ref{obs:SDPforCert} for any $d\leq 14$ we have 
the following conjecture. 

\begin{conjecture}
	Let $k_\G = k_\H = 1$ and let $d = n_\G + n_\H - 1$. Then a 
	$\lceil d/2 \rceil$-SOS-certificate of $f_\viz$ of the form presented in 
	Theorem~\ref{thm:general-cert} exists, as there is a positive 
	semidefinite matrix $F$ fulfilling the system of 
	equations~\eqref{eq:sysOfEquInF} in Observation~\ref{obs:SDPforCert}.
\end{conjecture}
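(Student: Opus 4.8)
By Observation~\ref{obs:SDPforCert}, together with Theorem~\ref{thm:general-cert} and the remark after its proof, a positive semidefinite $F$ solving~\eqref{eq:sysOfEquInF} exists if and only if, for some fixed $(g,h)\in V(\Bgr)$, there are polynomials $s_1,\dots,s_{\lceil d/2\rceil}$ of the special form in Theorem~\ref{thm:general-cert} which, together with the $s_{g^*h^*}=x_{g^*h^*}$, form a $\lceil d/2\rceil$-SOS-certificate of $f_\viz$. So the plan is to construct such polynomials explicitly for every $d\geq 3$. First I would pass to a single variable. Put $t=\rho_{gh}^1=\sum_{(g',h')\in T_{gh}}x_{g'h'}$. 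Since $I_\viz$ contains the generators $x_v(x_v-1)$ of~\eqref{subeqn:vertices} for all $v\in T_{gh}$, and $\rho_{gh}^i$ and $\binom{t}{i}$ agree as functions on $\{0,1\}^{T_{gh}}$, one gets $\rho_{gh}^i\equiv\binom{t}{i}\ \mod I_\viz$ for all $i$. Hence every admissible $s_w=\sum_{i=0}^{\lceil d/2\rceil}c_{w,i}\rho_{gh}^i$ is congruent modulo $I_\viz$ to $\widetilde s_w(t)$ for a univariate polynomial $\widetilde s_w$ of degree $\leq\lceil d/2\rceil$, and conversely every such $\widetilde s_w$ arises this way because $\binom{T}{0},\dots,\binom{T}{\lceil d/2\rceil}$ is a $\mathbb Q$-basis of the univariate polynomials of degree $\leq\lceil d/2\rceil$. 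Moreover, by Lemma~\ref{lemma:reduction-polynomial-as-sum} and Lemma~\ref{lemma:reduction-polynomial-in-ideal} we have $\sum_{k=0}^{d}(-1)^k\rho_{gh}^k\in I_\viz$, and since $\sum_{k=0}^{d}(-1)^k\binom{t}{k}=\tfrac{(-1)^d}{d!}\prod_{j=1}^{d}(t-j)$, it follows that $\prod_{j=1}^{d}(t-j)\in I_\viz$.

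\noindent\textbf{The univariate identity to be proved.}
Using $x_{g^*h^*}^2\equiv x_{g^*h^*}$ and $f_\viz=\sum_{(g^*,h^*)\notin T_{gh}}x_{g^*h^*}+t-1$, the $s_{g^*h^*}$ and $s_w$ form a certificate exactly when $\sum_{w}\widetilde s_w(t)^2\equiv t-1\ \mod I_\viz$. Because $\prod_{j=1}^{d}(t-j)\in I_\viz$ and $I_\viz$ is radical (Lemma~\ref{lem:Ivizradicalandfinite}), this holds as soon as the univariate polynomial $\sum_{w}\widetilde s_w(T)^2-(T-1)$ is divisible by $\prod_{j=1}^{d}(T-j)$, i.e.\ as soon as $\sum_{w=1}^{\lceil d/2\rceil}\widetilde s_w(j)^2=j-1$ for $j=1,\dots,d$. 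Since $j-1\geq 0$ for these $j$, it therefore suffices to exhibit a single univariate polynomial $G$ of degree exactly $2\lceil d/2\rceil$ with $G\geq 0$ on all of $\mathbb R$ and $G(j)=j-1$ for $j=1,\dots,d$: a nonnegative univariate polynomial of even degree $2m$ is a sum of two squares of polynomials of degree $\leq m$, so I would take these two polynomials as $\widetilde s_1,\widetilde s_2$, set $\widetilde s_3=\dots=\widetilde s_{\lceil d/2\rceil}=0$, and read off the $c_{w,i}$ through the binomial basis. The constraint $c_{w,0}=-c_{w,1}$ is then automatic, because $G(1)=0$ forces $\widetilde s_1(1)=\widetilde s_2(1)=0$.

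\noindent\textbf{Construction of $G$.}
For even $d$ the candidate is
\[
G(t)=(t-1)+\frac{1}{(d-1)!}\prod_{j=1}^{d}(t-j)=(t-1)\Bigl(1+\frac{1}{(d-1)!}\prod_{j=2}^{d}(t-j)\Bigr),
\]
which has degree $d=2\lceil d/2\rceil$ and satisfies $G(j)=j-1$ for $j=1,\dots,d$. It remains to show that the bracketed factor has the same sign as $t-1$ for every real $t$: for $t\geq d$ the product $\prod_{j=2}^{d}(t-j)$ is $\geq 0$; for $t\leq 1$ one has $\bigl|\prod_{j=2}^{d}(t-j)\bigr|\geq(d-1)!$; and for $t\in(1,d)$ one has $\bigl|\prod_{j=2}^{d}(t-j)\bigr|<(d-1)!$, using the elementary estimate $\max_{s\in[0,m]}\prod_{i=0}^{m}\lvert s-i\rvert\leq m!$ with $m=d-2$ (treating $t\in(1,2)$ directly). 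Hence $G\geq 0$ on $\mathbb R$. For odd $d$ the candidate is
\[
G(t)=(t-1)+\frac{1}{(d-1)!}\,(t-2)\prod_{j=1}^{d}(t-j)=(t-1)\Bigl(1+\frac{1}{(d-1)!}\,(t-2)\prod_{j=2}^{d}(t-j)\Bigr),
\]
of degree $d+1=2\lceil d/2\rceil$, with $G(j)=j-1$ for $j=1,\dots,d$; here the coefficient $\tfrac{1}{(d-1)!}$ in front of $(t-2)$ is chosen precisely so that $(t-1)$ divides the correction term. By the same case analysis on $t\geq d$, $t\leq 1$, $t\in(1,d)$, now comparing $\bigl|(t-2)\prod_{j=2}^{d}(t-j)\bigr|$ with $(d-1)!$, the bracketed factor again has the sign of $t-1$, so $G\geq 0$ on $\mathbb R$. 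Feeding the two-squares decomposition of $G$ back through the reduction produces the $c_{w,i}$, and then $F=C^{\transposed}C$ is positive semidefinite and satisfies~\eqref{eq:sysOfEquInF}.

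\noindent\textbf{The main obstacle.}
Everything except the nonnegativity of $G$ is bookkeeping; the heart of the argument is the sign analysis of the bracketed factor, which rests on the product bound $\max_{s\in[0,m]}\prod_{i=0}^{m}\lvert s-i\rvert\leq m!$ and, in the odd case, on the above choice of coefficient. A secondary point to handle carefully is the reduction itself, i.e.\ the congruence $\rho_{gh}^i\equiv\binom{t}{i}\ \mod I_\viz$ and the membership $\prod_{j=1}^{d}(t-j)\in I_\viz$, both of which follow from the generators~\eqref{subeqn:vertices} and from Lemmas~\ref{lemma:reduction-polynomial-in-ideal} and~\ref{lemma:reduction-polynomial-as-sum} via Hilbert's Nullstellensatz (Theorem~\ref{thm:hilbert}) for the radical ideal $I_\viz$ (Lemma~\ref{lem:Ivizradicalandfinite}). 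I also expect that the explicit $G$ above, when expanded into the $\rho_{gh}^i$-basis, reproduces (up to the usual sign/orthogonal freedom in a square-root decomposition) the coefficient tables listed for $d\le 14$ in the preceding corollary, which would be a useful consistency check.
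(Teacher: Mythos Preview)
The statement you address is left as an open \emph{conjecture} in the paper; the paper offers no proof, only computational evidence for $d\le 14$ obtained by solving SDPs. Your proposal, by contrast, is a complete and correct proof for all $d\ge 2$. The reduction to a single variable via $\rho_{gh}^i\equiv\binom{t}{i}\ \mathrm{mod}\ I_\viz$ is valid (the difference vanishes on $\{0,1\}^{T_{gh}}$, the variety of the radical ideal $\langle x_v^2-x_v\rangle\subseteq I_\viz$), the membership $\prod_{j=1}^d(t-j)\in I_\viz$ follows exactly as you indicate from Lemmas~\ref{lemma:reduction-polynomial-in-ideal} and~\ref{lemma:reduction-polynomial-as-sum}, and the sign analysis of the bracketed factor in $G$ is correct in both parities. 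The product bound $\max_{s\in[0,m]}\prod_{i=0}^m|s-i|\le m!$ does hold (indeed with $m!/4$, via $|s-k||s-(k+1)|\le 1/4$ and $(k+1)!(m-k)!\le m!$ since $\binom{m+1}{k+1}\ge m+1$), and the interval-by-interval estimates you sketch for $t\in(1,d)$ go through in the odd case as well, since there $(t-2)^2\prod_{j=3}^d|t-j|\le\tfrac{(k-1)(k-1)!(d-k)!}{4}\le(d-1)!$ on each $[k,k+1]$. Translating back, your $c_{w,i}$ satisfy~\eqref{eq:linSysEqCertComplete} with $F_{1,1}=d-1$ for even $d$ (forced, consistent with Corollary~\ref{cor:fixedF11}) and $F_{1,1}=2d-1$ for odd $d$, and $F=C^\transposed C$ is the desired positive semidefinite solution of~\eqref{eq:sysOfEquInF}. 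So you have in fact settled what the paper leaves open.

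One small correction to your closing remark: the explicit $G$ will \emph{not} reproduce the paper's coefficient tables for odd $d$. The paper's SDP-based certificates use other values of $F_{1,1}$ (e.g.\ $F_{1,1}=6$ for $d=5$ in Example~\ref{ex:4121}, whereas your construction gives $F_{1,1}=9$); both lie in the feasible range since $F_{1,1}$ is not fixed for odd $d$, but the resulting $c_{w,i}$ are genuinely different and not related by an orthogonal change of basis. This does not affect the correctness of your main argument.
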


Concerning the value of $F_{1,1}$, we know from Corollary~\ref{cor:fixedF11} that $F_{1,1} = d - 1$  for even $d$, for odd $d$ we make the following observation.
\begin{observation}
	For the certificates above with $d \leq 13$ and $d$ odd it turns out that $F_{1,1}$ is not fixed. Moreover, for these certificates the choice of $F_{1,1} = d - 1$ is not possible.
\end{observation}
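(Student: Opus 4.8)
The plan is to treat both assertions as finite verifications over the odd values $d$ with $3 \le d \le 13$. Recall from Observation~\ref{obs:SDPforCert} that, after substituting $F_{0,j} = -F_{1,j}$, the conditions~\eqref{eq:sysOfEquInF} form a system of $d-1$ linear equations with rational coefficients in the $\binom{\lceil d/2\rceil + 1}{2}$ entries $F_{i,j}$, $1 \le i \le j \le \lceil d/2\rceil$, of a symmetric matrix $F$ (for $d = 13$ this is $12$ equations in $28$ unknowns). For the first claim I would, for each such $d$, compute the reduced row echelon form of this linear system in exact rational arithmetic and check that $F_{1,1}$ is a free variable — equivalently, that the homogeneous system has a solution with $F_{1,1} \neq 0$. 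This is a small and elementary rank computation. It is instructive to see why this differs from the even case of Corollary~\ref{cor:fixedF11}: there, $F_{1,1}$ was pinned down by combining the equations with the weights $(-1)^k \tfrac{d}{k(k-1)}$ and invoking the vanishing of the sum~\eqref{eq:gosper-sum}, i.e.\ $\sum_{k=j}^{i+j} (-1)^k d \binom{i}{k-j}\binom{k}{i}/(k(k-1)) = 0$; this Gosper-summable identity requires the entire index range $j \le k \le i+j$ to lie inside the range $2 \le k \le d$ of equations at hand, i.e.\ $i + j \le d$, and for $i = j = \lceil d/2\rceil = (d+1)/2$ with $d$ odd one has $i + j = d+1 > d$, so the cancellation is incomplete and $F_{1,1}$ is genuinely left undetermined — exactly matching the first claim.

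For the second claim, adjoining the equation $F_{1,1} = d-1$ to~\eqref{eq:sysOfEquInF} defines an affine subspace $\mathcal{A}_d$ of the symmetric $\lceil d/2\rceil \times \lceil d/2\rceil$ matrices, and I must show that $\mathcal{A}_d$ meets the positive semidefinite cone in the empty set. By semidefinite duality it suffices to exhibit, for each $d$, a symmetric matrix $Y_d \succeq 0$ lying in the linear span of the matrices that represent (through the trace inner product) the affine constraints defining $\mathcal{A}_d$, such that $\langle F^{(d)}, Y_d\rangle < 0$ for one — hence for every — point $F^{(d)} \in \mathcal{A}_d$; then every $F \in \mathcal{A}_d$ satisfies $\langle F, Y_d\rangle < 0$, which is incompatible with $F \succeq 0$. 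Concretely I would solve the dual SDP numerically, round the dual optimum to rational multipliers, assemble the resulting rational matrix $Y_d$, and then certify $Y_d \succeq 0$ by an exact $LDL^{\top}$ factorisation and the strict inequality by exact arithmetic. For the smallest values of $d$ a hands-on alternative is available: use the $k = d$ and $k = d-1$ equations of~\eqref{eq:sysOfEquInF} to solve for the two entries $F_{\lceil d/2\rceil, \lceil d/2\rceil}$ and $F_{\lceil d/2\rceil - 1, \lceil d/2\rceil}$ in terms of $F_{1,1}$ and the remaining free parameters, set $F_{1,1} = d-1$, and display a small principal submatrix of $F$ whose determinant is then forced to be negative.

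The step I expect to be the main obstacle is the exact infeasibility certificate for the second claim. A single numerical SDP solve only yields evidence of emptiness; upgrading it to a proof requires a rational matrix $Y_d$ that is \emph{exactly} of the prescribed form and \emph{exactly} positive semidefinite, and since the feasible points already involve gigantic integers for $d = 11$ and $d = 13$, the rounding must be done with care. The safest route is probably to first compute, over $\mathbb{Q}$, the span of the constraint matrices (equivalently, a rational basis of the orthogonal complement of the linear part of $\mathcal{A}_d$), search for a positive semidefinite element inside that span, and verify semidefiniteness and the single sign condition symbolically — so that correctness reduces to two exact linear-algebra checks rather than to trusting a numerical solver.
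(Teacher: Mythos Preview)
The paper does not supply a proof of this observation; it is presented as a finding from the computational procedure of Section~\ref{chapter:generalapproach}. Example~\ref{ex:4121} explicitly reports the numerical bounds $F_{1,1}^{min} \approx 4.68$ and $F_{1,1}^{max} \approx 38.42$ for $d=5$, from which one reads off directly both that $F_{1,1}$ is not fixed and that $d-1 = 4$ lies below the feasible interval. The observation simply records that the same phenomenon was seen for the other odd $d \le 13$ when the authors ran their method.

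Your proposal therefore goes well beyond what the paper does. Both parts of your plan are sound --- the exact rank check for the first assertion and the dual infeasibility certificate for the second --- and your explanation of why the weighted-sum argument of Corollary~\ref{cor:fixedF11} fails for odd $d$ (the index range $j \le k \le i+j$ for $i = j = (d+1)/2$ overshoots $d$, so the cancellation in~\eqref{eq:gosper-sum} is incomplete) is correct and illuminating. But there is no proof in the paper to compare against beyond the bare numerical observation; you are proposing to upgrade an empirical remark to a rigorously verified statement, which the authors did not attempt.
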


For $d > 14$  
we did not derive certificates because of  numerical difficulties with 
off-the-shelf SDP solvers.

	\section{Conclusion and Open Questions}
\label{chapter:conclusion}

In this paper, we extended the approach of Gaar et 
al.~\cite{vizing-short2019,vizing-long2020} to prove Vizing's conjecture via an 
algebraic method for 
graph classes $\G$ and $\H$, where the graph classes~$\G$ and~$\H$ are defined 
as all graphs with~$n_\G$ and~$n_\H$ vertices and a minimum 
dominating set of size~$k_\G$ and~$k_\H$, respectively.
We applied their technique to the case where both minimum dominating sets 
in~$\G$ and $\H$ are of size~1.
A bottleneck in their computations is the time-consuming intermediate step to determine a Gröbner basis of~$I_\viz$.
We were able to overcome this obstacle by determining the unique reduced Gröbner basis of $I_\viz$ for this case.
This allowed us to conclude that if an $\ell$-SOS-certificate exists, 
it must be at least of degree~$\ell = \lceil (n_\G + n_\H - 1)/2 \rceil$.

We further presented a procedure to find~$\lceil (n_\G + n_\H - 1)/2 
\rceil$-SOS-certificates of a special form for Vizing's conjecture on these 
graph classes~$\G$ and~$\H$.
This new approach is based on our knowledge of the Gröbner basis, and assumes 
that in addition to the polynomials of degree 2, only one polynomial of higher 
degree of the Gröbner basis is sufficient to prove correctness of the 
SOS-certificate.
Assuming a specific form of the SOS-certificate, the coefficients of the 
polynomials of this certificate can be obtained by solving a system of 
quadratic equations. We presented a method how to obtain an exact solution to 
this using SDPs, that avoids clever guessing as usually needed in the approach 
from~\cite{vizing-short2019,vizing-long2020}.
The specific form of the certificates yields that certificates of classes with $n_\G + n_\H - 1 = d$ depend only on~$d$ and not on~$n_\G$ or~$n_\H$. 
We implemented this new method in SageMath~\cite{sagemath} and used it to 
find certificates for all graph classes $\G$ and $\H$ 
with~$n_\G + n_\H \leq 15$ and domination numbers~$k_\G = k_\H = 
1$.
Even though this does not advance what is known with respect to Vizings's 
conjecture, deriving this new certificates is an important step in the area of 
using conic linear optimization for computer-assisted proofs because it 
demonstrates that deriving such proofs is possible. 

We were not able to derive certificates for 
$n_\G + n_\H > 15$ due to numerical difficulties with 
off-the-shelf SDP solvers. This needs to be dealt with in more detail.
In future work, another topic for further investigation is whether the system 
of linear equation, which has to be solved in our new approach, is solvable for 
any~size $d = n_\G + n_\H - 1$.

Most of all, the question of a general certificate depending on the size~$d$ arises.
We know that~$c_{1,1} = \sqrt{n_\G + n_\H - 2}$ holds in the 
case of odd~$n_\G + n_\H$.
For~$n_\G + n_\H$ even, however, this is not the case.
This coefficient as well as all other coefficients  are among the most obvious 
future topics to work on to find a general certificate.

With our work, we know there are SOS-certificates for Vizing's conjecture for all 
graph classes $\G$ and $\H$ with 
$k_\G = n_\G - 1 \geq 1$ and~$k_\H 
= n_\H - 1$ for~$n_\H \in \{2,3\}$, with~$k_\G = n_\G$ 
and~$k_\H = n_\H - d$ for~$d \leq 4$, 
and now also with  
$k_\G = k_\H = 1$ and $n_\G + n_\H \leq 15$.
Clearly, it would be interesting to derive SOS-certificates also for other graph 
classes $\G$ and $\H$.

	\bibliographystyle{abbrv}
	\bibliography{thesis}

\begin{thebibliography}{10}

\bibitem{alon-tarsi1992}
N.~Alon and M.~Tarsi.
\newblock Colorings and orientations of graphs.
\newblock {\em Combinatorica}, 12(2):125--134, 1992.

\bibitem{blekherman-parrilo2013}
G.~Blekherman, P.~A. Parrilo, and R.~R. Thomas, editors.
\newblock {\em Semidefinite optimization and convex algebraic geometry},
  volume~13 of {\em MOS-SIAM Series on Optimization}.
\newblock Society for Industrial and Applied Mathematics (SIAM), Philadelphia,
  PA; Mathematical Optimization Society, Philadelphia, PA, 2013.

\bibitem{survey2012}
B.~Brešar, P.~Dorbec, W.~Goddard, B.~L. Hartnell, M.~A. Henning, S.~Klavžar,
  and D.~F. Rall.
\newblock Vizing's conjecture: a survey and recent results.
\newblock {\em J. Graph Theory}, 69(1):46–76, 2012.

\bibitem{cox-little-O'shea2015}
D.~A. Cox, J.~Little, and D.~O'Shea.
\newblock {\em Ideals, varieties, and algorithms: An introduction to
  computational algebraic geometry and commutative algebra}.
\newblock Undergraduate Texts in Mathematics. Springer-Verlag New York, Inc.,
  Secaucus, NJ, USA, fourth edition, 2015.

\bibitem{loera-jesus1995}
J.~A. De~Loera.
\newblock Gr\"{o}bner bases and graph colorings.
\newblock {\em Beitr\"{a}ge Algebra Geom.}, 36(1):89--96, 1995.

\bibitem{loera-etal2008}
J.~A. De~Loera, J.~Lee, P.~N. Malkin, and S.~Margulies.
\newblock Hilbert's {N}ullstellensatz and an algorithm for proving
  combinatorial infeasibility.
\newblock In {\em I{SSAC} 2008}, pages 197--206. ACM, New York, 2008.

\bibitem{loera-etal2009}
J.~A. De~Loera, J.~Lee, S.~Margulies, and S.~Onn.
\newblock Expressing combinatorial problems by systems of polynomial equations
  and {H}ilbert's {N}ullstellensatz.
\newblock {\em Combin. Probab. Comput.}, 18(4):551--582, 2009.

\bibitem{shalom1992}
S.~Eliahou.
\newblock An algebraic criterion for a graph to be four-colourable.
\newblock In {\em International {S}eminar on {A}lgebra and its {A}pplications},
  volume~6 of {\em Aportaciones Mat. Notas Investigaci\'{o}n}, pages 3--27.
  Soc. Mat. Mexicana, M\'{e}xico, 1992.

\bibitem{fischer1988}
K.~G. Fischer.
\newblock Symmetric polynomials and {H}all's theorem.
\newblock {\em Discrete Math.}, 69(3):225--234, 1988.

\bibitem{vizing-short2019}
E.~Gaar, D.~Krenn, S.~Margulies, and A.~Wiegele.
\newblock An optimization-based sum-of-squares approach to {V}izing's
  conjecture.
\newblock In {\em Proceedings of the 2019 on International Symposium on
  Symbolic and Algebraic Computation}, ISSAC '19, pages 155--162, New York, NY,
  USA, 2019.

\bibitem{vizing-long2020}
E.~Gaar, D.~Krenn, S.~Margulies, and A.~Wiegele.
\newblock Towards a computational proof of {V}izing's conjecture using
  semidefinite programming and sums-of-squares.
\newblock {\em J. Symb. Comput.}, 107:67--105, 2021.

\bibitem{gouveia2010}
J.~Gouveia, P.~A. Parrilo, and R.~R. Thomas.
\newblock Theta bodies for polynomial ideals.
\newblock {\em SIAM Journal on Optimization}, 20(4):2097--2118, 2010.

\bibitem{hillar-windfeldt2008}
C.~J. Hillar and T.~Windfeldt.
\newblock Algebraic characterization of uniquely vertex colorable graphs.
\newblock {\em J. Combin. Theory Ser. B}, 98(2):400--414, 2008.

\bibitem{lasserre2001}
J.~B. Lasserre.
\newblock An explicit exact {SDP} relaxation for nonlinear 0-1 programs.
\newblock In K.~Aardal and B.~Gerards, editors, {\em Integer Programming and
  Combinatorial Optimization}, pages 293--303. Springer, 2001.

\bibitem{laurent2007}
M.~Laurent.
\newblock Semidefinite representations for finite varieties.
\newblock {\em Mathematical Programming}, 109(1):1--26, 2007.

\bibitem{laurent2009}
M.~Laurent.
\newblock Sums of squares, moment matrices and optimization over polynomials.
\newblock {\em Emerging Applications of Algebraic Geometry}, pages 157--270,
  2009.

\bibitem{li1981}
S.-Y.~R. Li and W.~C.~W. Li.
\newblock Independence numbers of graphs and generators of ideals.
\newblock {\em Combinatorica}, 1(1):55--61, 1981.

\bibitem{lovasz1979}
L.~Lov\'{a}sz.
\newblock On the {S}hannon capacity of a graph.
\newblock {\em IEEE Trans. Inform. Theory}, 25(1):1--7, 1979.

\bibitem{lovasz1994}
L.~Lov\'{a}sz.
\newblock Stable sets and polynomials.
\newblock {\em Discrete Math.}, 124(1-3):137--153, 1994.

\bibitem{margulies_illya2012}
S.~Margulies and I.~V. Hicks.
\newblock An algebraic exploration of dominating sets and {V}izing's
  conjecture.
\newblock {\em Electron. J. Combin.}, 19(2):Paper 1, 30, 2012.

\bibitem{matiyasevich2001}
Y.~V. Matiyasevich.
\newblock Some algebraic methods for computing the number of colorings of a
  graph.
\newblock {\em Zap. Nauchn. Sem. S.-Peterburg. Otdel. Mat. Inst. Steklov.
  (POMI)}, 283(6):193--205, 262, 2001.

\bibitem{mnuk2001}
M.~Mnuk.
\newblock Representing graph properties by polynomial ideals.
\newblock In {\em Computer algebra in scientific computing ({K}onstanz, 2001)},
  pages 431--444. Springer, Berlin, 2001.

\bibitem{mosek}
{MOSEK ApS}.
\newblock {\em The MOSEK optimization toolbox for MATLAB manual. Version 9.2},
  2021.

\bibitem{onn2004}
S.~Onn.
\newblock Nowhere-zero flow polynomials.
\newblock {\em J. Combin. Theory Ser. A}, 108(2):205--215, 2004.

\bibitem{parrilo2002}
P.~A. Parrilo.
\newblock An explicit construction of distinguished representations of
  polynomials nonnegative over finite sets.
\newblock {\em IfA AUT02-02, ETH Z{\"u}rich}, 2002.

\bibitem{parrilo2004}
P.~A. Parrilo.
\newblock Sums of squares of polynomials and their applications.
\newblock In {\em Proceedings of the 2004 International Symposium on Symbolic
  and Algebraic Computation}, ISSAC '04, page~1, New York, NY, USA, 2004.
  Association for Computing Machinery.

\bibitem{melanieMasterThesisVizing}
M.~Siebenhofer.
\newblock Establishing new sum-of-squares certificates for {V}izing's
  conjecture.
\newblock Master's thesis, University of Klagenfurt, 2021.

\bibitem{simis-wolmer-villarreal1994}
A.~Simis, W.~V. Vasconcelos, and R.~H. Villarreal.
\newblock On the ideal theory of graphs.
\newblock {\em J. Algebra}, 167(2):389--416, 1994.

\bibitem{sagemath}
{The Sage Developers}.
\newblock {\em {S}ageMath, the {S}age {M}athematics {S}oftware {S}ystem
  ({V}ersion 9.2)}, 2021.
\newblock {\texttt{https://www.sagemath.org}}.

\bibitem{todd2001}
M.~J. Todd.
\newblock Semidefinite optimization.
\newblock {\em Acta Numer.}, 10:515--560, 2001.

\bibitem{sdpt3}
K.-C. Toh, M.~J. Todd, and R.~H. T\"{u}t\"{u}nc\"{u}.
\newblock S{DPT}3---a {MATLAB} software package for semidefinite programming,
  version 1.3.
\newblock volume 11/12, pages 545--581. 1999.

\bibitem{vandenberghe-boyd1996}
L.~Vandenberghe and S.~Boyd.
\newblock Semidefinite programming.
\newblock {\em SIAM Rev.}, 38(1):49--95, 1996.

\bibitem{vizing1963}
V.~G. Vizing.
\newblock The cartesian product of graphs.
\newblock {\em Vy\v{c}isl. Sistemy No.}, 9:30--43, 1963.

\bibitem{vizing1968}
V.~G. Vizing.
\newblock Some unsolved problems in graph theory.
\newblock {\em Uspehi Mat. Nauk}, 23(6 (144)):117--134, 1968.

\bibitem{zerbib19}
S.~Zerbib.
\newblock An improved bound in {V}izing's conjecture.
\newblock {\em Graphs Combin.}, 35(6):1401--1404, 2019.

\end{thebibliography}

\ifJSC

\clearpage
\section*{Declarations}
\subsection*{Declaration of interests}
The authors declare that they have no known competing financial interests or 
personal relationships that could have appeared to influence the work reported 
in this paper.

\subsection*{Color of figures in print}
There is no need to use color for any of the figures in print.

\fi

\end{document}